\newtheorem{theorem}{Theorem}[section]
\newtheorem{proposition}[theorem]{Proposition}
\newtheorem{lemma}[theorem]{Lemma}
\theoremstyle{remark}
\theoremstyle{definition}
\newtheorem{definition}[theorem]{Definition}
\newtheorem{remark}[theorem]{Remark}
\numberwithin{equation}{section}
\numberwithin{figure}{section}
\numberwithin{table}{section}
\newcommand{\dC}{\mathbb{C}}
\newcommand{\dN}{\mathbb{N}}
\newcommand{\dR}{\mathbb{R}}
\renewcommand{\i}{\sqrt{-1}}
\newcommand{\dZ}{\mathbb{Z}}
\newcommand{\fs}{\mathfrak{s}}
\newcommand{\fM}{\mathfrak{M}}
\newcommand{\RR}{\mathbb{R}}
\newcommand{\ZZ}{\mathbb{Z}}
\newcommand{\PP}{\mathbb{P}}
\newcommand{\p}{\partial}
\newcommand{\cC}{\mathcal{C}}
\newcommand{\CC}{\mathbb{C}}
\DeclareMathOperator{\ALG}{ALG}
\DeclareMathOperator{\dvol}{dvol}
\DeclareMathOperator{\Id}{Id}
\DeclareMathOperator{\I}{I}
\DeclareMathOperator{\II}{II}
\DeclareMathOperator{\III}{III}
\DeclareMathOperator{\IV}{IV}
 \DeclareMathOperator{\model}{model}
\DeclareMathOperator{\SF}{sf}
\DeclareMathOperator{\Ima}{Im}
\DeclareMathOperator{\Rea}{Re}
\DeclareMathOperator{\Rm}{Rm}
\DeclareMathOperator{\Vol}{Vol}
\DeclareMathOperator{\Nil}{Nil}
\begin{document}

 \title[Gravitational instantons with quadratic volume growth]{Gravitational instantons with quadratic volume growth}

 \author{Gao Chen} \thanks{The first named author is supported by the Project of Stable Support for Youth Team in Basic Research Field, Chinese Academy of Sciences, YSBR-001. The second named author is supported by NSF Grants DMS-1811096 and DMS-2105478. MSC 2020 Primary: 53C25, 53C26.
}
  \address{Institute of Geometry and Physics, University of Science and Technology of China, Shanghai, China, 201315}
 \email{chengao1@ustc.edu.cn}
 \author{Jeff Viaclovsky}
 \address{Department of Mathematics, University of California, Irvine, CA 92697}
 \email{jviaclov@uci.edu}

\begin{abstract}
There are two known classes of gravitational instantons with quadratic volume growth at infinity, known as type $\ALG$ and $\ALG^*$. Gravitational instantons of type $\ALG$ were previously classified by Chen-Chen. In this paper, we prove a classification theorem for ALG$^*$ gravitational instantons. We determine the topology and prove existence of ``uniform'' coordinates at infinity for both ALG and ALG$^*$ gravitational instantons. We also prove a result regarding the relationship between ALG gravitational instantons of order $\mathfrak{n}$ and those of order $2$. 
\end{abstract}

\date{}

\maketitle

\setcounter{tocdepth}{1}
\tableofcontents

\section{Introduction}
We begin with the following definitions.
\begin{definition}
\label{d:HK}
A hyperk\"ahler $4$-manifold $(X, g, I, J, K)$ is a Riemannian $4$-manifold $(X,g)$ with a triple of K\"ahler structures $(g, I), (g, J), (g, K)$ such that $IJ=K$. 
\end{definition}
We denote by $\bm{\omega} = (\omega_1, \omega_2, \omega_3)$ the K\"ahler forms associated to $I, J, K$, respectively. The form $\Omega = \omega_2 + \i \omega_3$ is a non-vanishing holomorphic $2$-form with respect to the complex structure $I$. These satisfy the condition
\begin{align}
\label{e:HKtriple}
\omega_1 \wedge \omega_1 = \frac{1}{2} \Omega \wedge \bar\Omega. 
\end{align}
Conversely, any 2-dimensional K\"ahler manifold $(X, g, I, \omega_1)$ with a non-vanishing holomorphic $2$-form $\Omega$ which satisfies \eqref{e:HKtriple} is hyperk\"ahler.
\begin{definition}
A \textit{gravitational instanton} $(X, g, \bm{\omega})$ is a complete hyperk\"ahler $4$-manifold $X$.
\end{definition}
If $g$ is any gravitational instanton satisfying a curvature decay condition 
\begin{equation}
|\Rm|=O(s^{-2-\epsilon}),\label{e:faster-decay}
\end{equation} 
for some $\epsilon > 0$ as $s \to \infty$, where $s(x) = d_g(x_0,x)$ for some $x_0 \in X$, Chen-Chen proved that it must be of type ALE, ALF, ALG, or ALH; see~\cite[Theorem~1.1]{CCI}. We refer the reader to \cite{BKN, CCII, CCIII, Hein, Kronheimer, Minerbe10} and references therein for more background on gravitational instantons. 

In this paper, we are interested in two classes of gravitational instantons known as type $\ALG$ and $\ALG^*$, whose (unique) tangent cones at infinity are two-dimensional flat cones. The $\ALG$ type satisfies the curvature decay \eqref{e:faster-decay}, but the $\ALG^*$ case has curvature decay $O(s^{-2}(\log s)^{-1})$, as $s \to \infty$. 
Both the $\ALG$ and $\ALG^*$ types have quadratic volume growth
\begin{align}
\label{qvg}
c \cdot t^2<\Vol(B_t(x_0))<C \cdot t^2
\end{align}
for $t$ sufficiently large.

For any rational elliptic surface $S$, and any singular fiber $D$ of finite monodromy, that is, of Kodaira type $\II, \III, \IV, \II^*, \III^*, \IV^*, $ or $\I_0^*$, Hein constructed in \cite{Hein} a nice background K\"ahler form on $S\setminus D$, which is $\partial \overline{\partial}$-cohomologous to the restriction of a K\"ahler form from $S$, and proved that the corresponding Tian-Yau metric \cite[Theorem~1.1]{TianYau} is an ALG gravitational instanton on $S \setminus D$. Conversely, it was shown in \cite{CCIII} 
that any ALG gravitational instanton can be compactified to a rational elliptic surface $S$, and the K\"ahler form with respect to the elliptic complex structure must be $\partial \overline{\partial}$-cohomologous to the restriction of a K\"ahler form from $S$. In this paper, we prove a similar classification for $\ALG^*$ gravitational instantons. For any rational elliptic surface $S$, and any singular fiber $D$ of Kodaira type $\I_{\nu}^*, \ 1 \leq \nu \leq 4$, in \cite{Hein}, Hein also constructed a nice background K\"ahler form on $S\setminus D$, which is $\partial \overline{\partial}$-cohomologous to the restriction of a K\"ahler form from $S$, and proved that the corresponding Tian-Yau metric \cite[Theorem~1.1]{TianYau} is asymptotic to a semi-flat metric near the divisor. 
We will show that these metrics are actually $\ALG^*$ gravitational instantons on $S \setminus D$ according to Definition~\ref{d:ALGstar}; see Section~\ref{s:model-metric}.
We also prove a converse; namely, that any ALG$^*$ gravitational instanton can be compactified to a rational elliptic surface $S$, and the K\"ahler form with respect to the elliptic complex structure must be $\partial \overline{\partial}$-cohomologous to the restriction of a K\"ahler form from $S$; see Theorem~\ref{t:ALGstar}. 

As a corollary of this classification result, we will determine the topology of ALG$^*$ gravitational instantons; see Theorem \ref{t:ALGstartopology}. Using Chen-Chen's classification of ALG gravitational instantons from \cite{CCIII}, we will also determine the topology of ALG gravitational instantons; see Theorem~\ref{t:ALGtopology}. In both cases, we will address the issue of the existence of ``uniform'' coordinates at infinity; see Theorem \ref{t:ALGstaruniform} and Theorem \ref{t:ALGuniform}. Finally, in Theorem \ref{t:order}, we prove a result about the relationship between ALG gravitational instantons of order $\mathfrak{n}$ and those of order $2$.

In the following subsections, we will outline the paper and state our main results in much more detail. 

\subsection{ALG$^*$ gravitational instantons}
\label{ss:ALGstar}
In Section~\ref{s:model-metric}, we will define the 
{\textit{standard $\ALG^*$ model space}}, which is denoted by 
\begin{align}
(\fM_{2\nu}(R),g^{\fM}_{\kappa_0,L},  \bm{\omega}_{\kappa_0,L}^{\fM}),
\end{align}
and depends on  parameters $\nu \in \ZZ_+$, $\kappa_0 \in \RR$ and $R, L \in \RR_+$.
Here, we just note that the manifold $\fM_{2\nu}(R)$ is diffeomorphic to $(R, \infty) \times \mathcal{I}_{\nu}^3$, 
where $\mathcal{I}_{\nu}^3$ is an infranilmanifold, which is a circle bundle of degree $\nu$ over a Klein bottle. We will let $r$ denote the coordinate on $(R, \infty)$, and define $\fs = r V^{1/2}$, where $V = \kappa_0 + \frac{\nu}{\pi} \log r$. The hyperk\"ahler structure is obtained via a Gibbons-Hawking ansatz, and explicit formulas for the metric and hyperk\"ahler structure can be found in Section~\ref{s:model-metric}. We note that the paramter $L$ is just an overall scaling parameter. 

\begin{definition}[ALG$^*$ gravitational instanton]
\label{d:ALGstar} 
A complete hyperk\"ahler $4$-manifold $(X,g,\bm{\omega} )$
is called an $\ALG^*$ gravitational instanton of order $\mathfrak{n} > 0$ with parameters 
$\nu \in \ZZ_+$, $\kappa_0 \in \RR$ and $L \in \RR_+$ if there exist an $\ALG^*$ model space $(\fM_{2\nu}(R),g^{\fM}_{\kappa_0,L}, \bm{\omega}^{\fM}_{\kappa_0,L})$ with $R >0$, a compact subset $X_R \subset X$, and a diffeomorphism 
$\Phi: \fM_{2\nu}(R) \rightarrow X \setminus X_R$ such that
  \begin{align}
 \big|\nabla_{g^{\fM}}^k(\Phi^*g- g^{\fM}_{\kappa_0,L})\big|_{g^{\fM}}&=O(\fs^{-k-\mathfrak{n}}), \\
 \big|\nabla_{g^{\fM}}^k(\Phi^*\omega_i- \omega_{i,\kappa_0,L}^{\fM})\big|_{g^{\fM}}&=O(\fs^{-k-\mathfrak{n}}), \ i = 1,2, 3,
\end{align}
as $\fs \to \infty$ for any $k\in\dN_0$. 
\end{definition}

\begin{remark}
We note that $\ALG^*$ gravitational instantons satisfy the following properties:
\begin{enumerate}

\item $\Vol(B_t(x_0)) \sim t^2$ as $t \to \infty$. 

\item The tangent cone at infinity is $\RR^2/ \{\pm 1\}$.

\item $|\Rm_g| = O( s^{-2} (\log s)^{-1})$ as $s \to \infty$. 

\item There exist $\ALG^*$ coordinates on $X$ so that the order satisfies $\mathfrak{n} \geq 2$; see \cite[Theorem~1.10]{CVZ2}.
\end{enumerate}
\end{remark}
The following is our main classification theorem for ALG$^*$ gravitational instantons. 

\renewcommand*{\thetheorem}{A}
\begin{theorem}[ALG$^*$ classification]  
We have the following relationship between $\ALG^*$ gravitational instantons and rational elliptic surfaces.  

\begin{enumerate}
\item 
Let $(X,g,\bm{\omega})$ be an $\ALG^*$ gravitational instanton with parameters $\nu, \kappa_0,$ and $L$. 
Then $\nu \leq 4$, and $X$ can be compactified to a rational elliptic surface $S$ with global section by adding a Kodaira singular fiber $D$ of type $\I_{\nu}^*$ at infinity, with respect to the complex structure $I$.  The $2$-form $\Omega=\omega_2+ \i\omega_3$ is a rational 2-form on $S$ with $\mbox{div}(\Omega) = -D$.
Furthermore, we can choose $S$ so that 
there exists a K\"ahler form $\omega$ on $S$ and a smooth function 
$\varphi: X \rightarrow \RR$ such that
\begin{equation}
\omega_1=\omega+\i\partial\bar\partial\varphi.
\end{equation}

\item Conversely, let $(S,I)$ be a rational elliptic surface with a type $\I_\nu^*$ fiber $D$.
For any $\kappa_0 \in \RR$, any K\"ahler form $\omega$ on $S$, and any rational 2-form $\Omega=\omega_2+\i\omega_3$ on $S$ with $\mbox{div}(\Omega) = -D$,  there exist $c > 0$, $L > 0$,  and a smooth function $\varphi: X \rightarrow \RR$, where $X \equiv S \setminus D$, such that 
\begin{equation}
(X, g, \omega_1=\omega+\i \partial\bar\partial\varphi, c \cdot \omega_2,c \cdot \omega_3)
\end{equation} is an $\ALG^*$ gravitational instanton with parameters $\nu, \kappa_0$, and $L$, where $g$ is the metric determined by $\omega_1$ and the elliptic complex structure $I$.
\end{enumerate}
\label{t:ALGstar} 
\end{theorem}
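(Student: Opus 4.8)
The plan is to prove the two directions separately, with the bulk of the work going into the forward direction (1), since the converse (2) largely follows from Hein's existence result together with the analysis of the model metric carried out in Section~\ref{s:model-metric}.

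\textbf{Direction (1): compactification.} Starting from an $\ALG^*$ gravitational instanton $(X,g,\bm\omega)$, I would first use the asymptotic model structure to produce a holomorphic fibration near infinity. Concretely, on the end $\Phi(\fM_{2\nu}(R))$ the complex structure $I$ is $C^{k}$-close to the model complex structure $I^{\fM}$, which (via the Gibbons--Hawking description) comes with a holomorphic map to a punctured disk whose fibers are tori degenerating to a Kodaira $\I_\nu^*$ configuration. Perturbing this by solving a $\bar\partial$-problem with the decay rates from Definition~\ref{d:ALGstar}, I would obtain a genuine holomorphic function $z$ on the end of $X$ realizing it as a (singular) elliptic fibration over a punctured disk. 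One then wants to fill in the puncture: the $\I_\nu^*$ monodromy (conjugate to $-\mathrm{Id}$ times a unipotent of degree $\nu$) together with Kodaira's classification of degenerations of elliptic fibrations forces the completion to be an $\I_\nu^*$ fiber $D$, and adding $D$ compactifies $X$ to a complex surface $S$. To see $S$ is a rational elliptic surface with a section, I would compute $c_1^2$, $b_2$, and the Euler characteristic using the topology of the $\I_\nu^*$ fiber and the quadratic volume growth (the metric is ALE-like only in a logarithmically weighted sense, but the $L^2$ cohomology / signature computations of Chen--Chen in \cite{CCII,CCIII} apply); the section comes from the distinguished section of the model Gibbons--Hawking fibration. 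The statement $\mathrm{div}(\Omega)=-D$ follows because $\Omega=\omega_2+\i\omega_3$ matches $\Omega^{\fM}$ to the given order, and $\Omega^{\fM}$ has a simple pole exactly along the fiber at infinity (this is visible from the explicit formula in Section~\ref{s:model-metric}). The constraint $\nu\le 4$ is then forced by the requirement $c_1^2(S)=0$ for a rational elliptic surface, since the $\I_\nu^*$ fiber already accounts for $6+\nu$ of the possible reducible components.

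\textbf{The $\partial\bar\partial$-statement.} Once $X\hookrightarrow S$ with $X=S\setminus D$, I would produce the potential $\varphi$ by a Hodge-theoretic argument: fix any K\"ahler form $\omega$ on $S$; then $\omega_1-\omega$ is a closed $(1,1)$-form on $X$ which, by the asymptotics, decays and has bounded potential growth. The key point is that the class $[\omega_1]-[\omega|_X]\in H^2_{dR}(X)$ vanishes, or at least is $\partial\bar\partial$-exact; this is where I would invoke the analysis of the Laplacian on ALG$^*$ ends (weighted Sobolev/Schauder theory, as developed for this geometry in \cite{CVZ2} and by analogy with \cite{CCIII}) to solve $\i\partial\bar\partial\varphi = \omega_1-\omega$ with $\varphi$ of controlled growth. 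One must first adjust $\omega$ within its K\"ahler cone so that the periods match — this uses that $H^2(S,\RR)\to H^2(X,\RR)$ is surjective with kernel spanned by components of $D$, and that those components pair appropriately with $\omega_1$ (their periods are computed from the model).

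\textbf{Direction (2): converse.} Given $(S,I)$ with an $\I_\nu^*$ fiber $D$, a K\"ahler form $\omega$, and $\Omega$ with $\mathrm{div}(\Omega)=-D$, I would invoke Hein's construction \cite{Hein}: it yields a background K\"ahler metric on $X=S\setminus D$ and a Tian--Yau Calabi--Yau metric $\omega_1=\omega+\i\partial\bar\partial\varphi$ asymptotic to the semi-flat metric near $D$. The content to add is that this semi-flat asymptotic is precisely an $\ALG^*$ model end in the sense of Definition~\ref{d:ALGstar}: this is exactly the identification carried out in Section~\ref{s:model-metric}, matching the semi-flat data $(\kappa_0, L)$ — where $\kappa_0$ is a free additive constant in the harmonic function $V$ and $L$ is the overall scale — and deducing the polynomial decay rate $\mathfrak{n}>0$ of the Tian--Yau correction from the decay estimates in \cite{Hein} (with the improvement $\mathfrak{n}\ge 2$ coming from \cite[Theorem~1.9]{CVZ2}). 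Rescaling $\omega_2,\omega_3$ by the constant $c$ arranges \eqref{e:HKtriple} given the normalization of $\omega_1$.

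\textbf{Main obstacle.} The hardest step is the first one: upgrading the merely asymptotic (smooth, polynomially-decaying) agreement of $I$ with $I^{\fM}$ into an honest holomorphic elliptic fibration on the end and then across the puncture, i.e.\ the compactification itself. The difficulty is that, unlike the ALG case in \cite{CCIII} where curvature decays faster than quadratically, here the curvature decays only like $s^{-2}(\log s)^{-1}$, so the relevant weighted analysis sits at a borderline rate and one must track logarithmic factors carefully when solving the $\bar\partial$-equation and when applying a Riemann-extension/removable-singularity argument to fill in $D$. Establishing that the filled-in fiber is exactly of Kodaira type $\I_\nu^*$ (and not some other degeneration with the same monodromy up to the ambiguities Kodaira's list allows) — and that $S$ is rational with a section rather than merely some elliptic surface — is the crux, and I expect it to occupy the technical heart of the paper.
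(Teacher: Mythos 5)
Your outline matches the paper's at the coarsest level (compactify via a holomorphic fibration, identify the fiber, show rationality, run Tian--Yau for the converse), but several steps that carry the actual weight of the proof are missing or would not go through as described. First, to invoke Kodaira's normal form and fill in the $\I_\nu^*$ fiber one needs a \emph{holomorphic} section of the fibration near infinity, not just the distinguished section of the Gibbons--Hawking/semi-flat model, which is only ``almost holomorphic''; the paper devotes a $\bar\partial$-argument to correcting that section (solving a Laplace equation on $\CC$ with non-integer weight), and without it the identification of the end with Kodaira's model $(\Delta^*\times\CC)/(\ZZ\tau_1\oplus\ZZ\tau_2)$ is not available. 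Likewise the \emph{global} section of $S$ does not come from the model (that only gives a section near $D$): the paper obtains it by exhibiting an exceptional curve $E$ with $E\cdot D=1$ via adjunction and ruling out multiple fibers. Second, your Hodge-theoretic plan for $\omega_1=\omega+\i\partial\bar\partial\varphi$ is a gap rather than a proof: on the noncompact $X$, vanishing of $[\omega_1]-[\omega|_X]$ in de Rham cohomology does not by itself produce a $\partial\bar\partial$-potential of controlled growth, and ``adjusting $\omega$ within its K\"ahler cone so the periods match'' presupposes that $[\omega_1]$ lies in the restriction of the K\"ahler cone of $S$, which you do not address. The paper instead writes both $\omega_1$ and a K\"ahler form of $S$ as semi-flat plus $\i\partial\bar\partial$ (Hein's Claim~1, after Gross--Wilson), compares them by a fiberwise holomorphic translation, and \emph{changes the compactifying section} to $\sigma_1-\sigma_2+\sigma_3$ --- this is precisely why the theorem reads ``we can choose $S$ so that\dots''.

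In the converse direction, simply invoking Hein's construction is not enough: the theorem prescribes an \emph{arbitrary} K\"ahler form $\omega$ on $S$ and an arbitrary $\kappa_0$, so one must modify $\omega$ by explicit $\i\partial\bar\partial$-exact cutoff terms until it agrees with a suitable semi-flat metric near infinity \emph{and} the normalization $\int_X\bigl(\omega_{t,t'}^2-\tfrac{c^2}{2}\Omega\wedge\bar\Omega\bigr)=0$ holds; without this balancing (achieved in the paper by a continuity argument in the parameter $t'$), Tian--Yau's theorem does not yield a bounded solution asymptotic to the model, and the resulting metric need not have the prescribed parameters or lie in the class of the given $\omega$. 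Your proposal omits this step entirely. Finally, the bound $\nu\le4$ does not follow from the Euler-characteristic-type count you sketch (which only gives $\nu\le6$); it is the lattice-theoretic constraint on fiber configurations of rational elliptic surfaces (Miranda--Persson), which is what the paper cites.
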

\setcounter{theorem}{0}
\renewcommand*{\thetheorem}{\thesection.\arabic{theorem}}
Part (1) solves a particular case of Yau's conjecture in \cite[page~246]{YauICM},
and shows how these hyperk\"ahler structures are related to the geometry of rational elliptic surfaces. We note that the construction in Part (2) is similar to the modification by Chen-Chen in \cite[Theorem~4.3]{CCIII} of Hein's construction \cite{Hein} using Tian-Yau's methods \cite[Theorem~1.1]{TianYau}.
Hein's construction produces hyperk\"ahler structures which are asymptotic to a semi-flat hyperk\"ahler structure. But, as mentioned above, our $\ALG^*$ model space is defined via a Gibbons-Hawking construction. An important aspect our proof is to show that the model semi-flat metrics are indeed Gibbons-Hawking; see Subsection \ref{ss:semi-flat}. 

An application of the above classification is to show that we can view any ALG$^*$ gravitational instanton with $1 \leq \nu \leq 4$ as living on a fixed manifold $X_{\nu}$.
\renewcommand*{\thetheorem}{B}
\begin{theorem} 
\label{t:ALGstartopology}
If $(X, g, \bm{\omega})$ and $(X',g', \bm{\omega'})$ are any two $\ALG^*$ gravitational instantons with $\nu = \nu'$, then $X$ is diffeomorphic to $X'$. 
\end{theorem}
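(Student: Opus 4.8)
The first step is bookkeeping at infinity. By Definition~\ref{d:ALGstar}, the end $X \setminus X_R$ is diffeomorphic to the model end $\fM_{2\nu}(R) \cong (R,\infty) \times \mathcal{I}_\nu^3$, so after fixing an identification of $\partial X_R$ with $\mathcal{I}_\nu^3$ we may write $X \cong X_R \cup_{\mathcal{I}_\nu^3} \big([R,\infty) \times \mathcal{I}_\nu^3\big)$ with $X_R$ a compact $4$-manifold with boundary, and similarly $X' \cong X'_{R'} \cup_{\mathcal{I}_\nu^3}\big([R',\infty)\times\mathcal{I}_\nu^3\big)$. Since any diffeomorphism $\phi : X_R \to X'_{R'}$ induces a diffeomorphism $\bar\phi$ of $\mathcal{I}_\nu^3$, and $\phi$ together with $\bar\phi \times \mathrm{id}$ on the collar (after a harmless reparametrization matching $R$ with $R'$) glues to a diffeomorphism $X \to X'$, it suffices to prove $X_R \cong X'_{R'}$. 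Now I invoke Theorem~\ref{t:ALGstar}(1): $X$ compactifies to a rational elliptic surface $\pi_S : S \to \mathbb{CP}^1$ with global section by attaching a Kodaira fiber $D$ of type $\I_\nu^*$ over a point $p_0$, and matching the $\ALG^*$ end with a deleted-disk neighborhood of $D$ identifies $X_R$ with $S$ minus the interior of a small fibered neighborhood $\pi_S^{-1}(\Delta)$, $\Delta \ni p_0$. So the theorem reduces to the statement that \emph{$S \setminus \operatorname{int}\pi_S^{-1}(\Delta)$ is independent, up to diffeomorphism, of the rational elliptic surface $S$ with section carrying a type $\I_\nu^*$ fiber.}

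The plan for this is to connect any two such pairs $(S,D)$, $(S',D')$ through an algebraic family and apply Ehresmann's theorem. Every rational elliptic surface with section is the relatively minimal model of a Weierstrass fibration $y^2 = x^3 + a(t)\,x + b(t)$ with $a \in H^0(\mathbb{CP}^1,\mathcal{O}(4))$, $b \in H^0(\mathbb{CP}^1,\mathcal{O}(6))$ and discriminant $\Delta = 4a^3 + 27b^2 \in H^0(\mathcal{O}(12))$; prescribing a type $\I_\nu^*$ fiber at a fixed point $p_0$ carves out a locally closed subset $\mathcal{W}_\nu$ of the space of coefficient pairs, which is irreducible by standard facts on the stratification of Weierstrass data by Kodaira fiber type, hence connected, hence (being a complex variety) path-connected. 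Choose a path $\tau \mapsto (a_\tau,b_\tau)$ in $\mathcal{W}_\nu$ joining the data of $(S,D)$ to that of $(S',D')$, and let $\mathcal{S} \to \mathbb{CP}^1 \times [0,1]$ be the resulting family of relatively minimal elliptic surfaces, with $\mathcal{D}$ the type $\I_\nu^*$ fiber over $\{p_0\}\times[0,1]$.

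Along this path the zeros of $\Delta_\tau$ other than $p_0$ have total order $6-\nu$ and can never reach $p_0$, where $\Delta_\tau$ vanishes to the fixed order $\nu+6$; so they stay in a compact subset of $\mathbb{CP}^1\setminus\{p_0\}$, and one may fix one small disk $\Delta \ni p_0$ disjoint from all of them. Then $\mathcal{S}|_{(\mathbb{CP}^1\setminus\Delta)\times[0,1]}$ is a smooth manifold and its projection to $[0,1]$ is a \emph{proper} submersion — over $\Delta$ the family carries only the fiber $\mathcal{D}$, whose Kodaira type is locally constant, so the minimal model also varies smoothly there. By Ehresmann's fibration theorem this projection is a locally trivial bundle over $[0,1]$; its fibers are $S_\tau \setminus \operatorname{int}\pi_{S_\tau}^{-1}(\Delta)$, so these are mutually diffeomorphic, and specializing to $\tau = 0,1$ and returning to the first paragraph gives $X \cong X'$.

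I expect the main obstacle to be the second and third paragraphs: establishing connectedness (irreducibility) of $\mathcal{W}_\nu$, and verifying that the relatively minimal model of the Weierstrass family is genuinely a smooth family in a neighborhood of the fiber at infinity — the loss of properness for $S \setminus D$ itself is precisely what forces the device of excising the fixed neighborhood $\pi_S^{-1}(\Delta)$, legitimate because the remaining singular fibers are finite in number and bounded away from $p_0$. If one prefers to avoid moduli considerations, an alternative is available: $S \setminus \operatorname{int}\pi_S^{-1}(\Delta)$ is an elliptic fibration with section over the disk $\mathbb{CP}^1\setminus\Delta$, of Euler characteristic $12 - (\nu+6) = 6-\nu$ and with boundary monodromy in the conjugacy class in $\SL(2,\ZZ) = \operatorname{MCG}(T^2)$ determined by the type $\I_\nu^*$ fiber; deforming its $6-\nu$ interior singular fibers to nodal ones turns it into an elliptic Lefschetz fibration over the disk, and the factorization of the prescribed boundary element into $6-\nu$ positive Dehn twists is unique up to Hurwitz moves and simultaneous conjugation by genus-one rigidity of Moishezon–Livn\'e type, which pins down the total space rel boundary, hence $X_R$, in terms of $\nu$ alone.
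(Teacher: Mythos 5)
Your overall strategy is the same as the paper's: compactify both instantons to rational elliptic surfaces with a section and an $\I_\nu^*$ fiber at infinity (Theorem~\ref{t:ALGstar}(1)), pass to Weierstra{\ss} data, connect the two data sets through a connected parameter space, and deduce the diffeomorphism from an Ehresmann/vector-field argument applied to the resulting family, excising a fixed fibered neighborhood of the fiber at infinity. The reduction in your first paragraph and the use of the collar are fine, and for the purely topological statement your weaker conclusion (no control of the map near infinity) suffices. However, there are two gaps, one of which is serious.

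The serious one is your assertion that the family $\mathcal{S}$ of \emph{relatively minimal models} over the path $\tau\mapsto(a_\tau,b_\tau)$ is a smooth manifold away from $\Delta$, so that Ehresmann applies. Along the path the singular fibers other than the $\I_\nu^*$ fiber can change Kodaira type (the discriminant can acquire or lose multiple roots away from $p_0$), and at such parameters the Weierstra{\ss} total space acquires or loses RDP singularities whose minimal resolutions do \emph{not} fit into a smooth family over the parameter: a simultaneous resolution of a family of RDPs exists only after a base change (Brieskorn, Tjurina). In particular, if either endpoint $(a_0,b_0)$ or $(a_1,b_1)$ has a fiber worse than $\I_1$ away from $p_0$ — which certainly happens for actual ALG$^*$ instantons — you cannot simply take a generic path and hope the family is smooth at the endpoints. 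The paper handles this by splitting into ``good'' data (all other fibers of type $\I_1$, forming the complement of a proper subvariety) where the vector-field argument works directly, and ``bad'' data, which is connected to good data by a one-parameter smoothing of the RDPs and then resolved simultaneously after base change. You need this step, or an equivalent one, for the argument to close. The second, lesser, gap is the claimed irreducibility/connectedness of the stratum $\mathcal{W}_\nu$ ``by standard facts'': this is true but not free; the paper verifies it by writing out, via Tate's algorithm, the explicit polynomial conditions on the coefficients of $A$ and $B$ for each $\I_\nu^*$ ($\nu=1,\dots,4$) and exhibiting the locus as a graph over a connected set. Your closing alternative via Lefschetz fibrations and a Moishezon--Livn\'e-type rigidity for factorizations with prescribed boundary monodromy is an interesting idea but is well beyond what can be cited as standard in the relative (disk with boundary) setting, so it does not rescue the main line of the argument as written.
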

\setcounter{theorem}{0}
\renewcommand*{\thetheorem}{\thesection.\arabic{theorem}}
Our next result is the following refinement of this, which takes into account the $\ALG^*$ coordinates. 
\renewcommand*{\thetheorem}{C}
\begin{theorem} 
\label{t:ALGstaruniform}
Let $(X,g,\bm{\omega})$ and $(X',g',\bm{\omega}')$ be $\ALG^*$ gravitational instantons of order~$2$ with the same $\nu, \kappa_0, L$, with
 $\ALG^*$ coordinates $\Phi:  \fM_{2\nu}(R)  \rightarrow  X \setminus X_R$
and  $\Phi':  \fM_{2\nu}(R) \rightarrow  X' \setminus X'_R$. Then there exists a mapping  $F :   \fM_{2\nu}(R) \rightarrow   \fM_{2\nu}(R)$ 
 and a diffeomorphism $\Psi' : X \rightarrow X'$ such that $F$ preserves the model hyperk\"ahler structure and $\Psi' \circ \Phi = \Phi' \circ F$ when restricted to $ \fM_{2\nu}(R)$ for a sufficiently large $R$. Consequently, the hyperk\"ahler structure $(X,(\Psi')^* g', (\Psi')^* \bm{\omega}')$ is $\ALG^*$ of order~$2$ in the $\ALG^*$ coordinates defined by $\Phi$.
\end{theorem}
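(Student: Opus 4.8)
The plan is to deduce Theorem~\ref{t:ALGstaruniform} from the classification Theorem~\ref{t:ALGstar} together with a uniformization of the two $\ALG^*$ coordinate systems, reducing everything to the structure of the automorphism group of the model space $\fM_{2\nu}(R)$. First I would apply Theorem~\ref{t:ALGstar}(1) to both $(X,g,\bm{\omega})$ and $(X',g',\bm{\omega}')$: each compactifies to a rational elliptic surface $S$, resp.\ $S'$, with a type $\I_\nu^*$ fiber $D$, resp.\ $D'$, at infinity, with $\Omega = \omega_2 + \i\omega_3$ (resp.\ $\Omega'$) a rational $2$-form with $\mathrm{div}(\Omega) = -D$. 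The key point is that, near the singular fiber, the complex structure $I$ and the holomorphic $2$-form $\Omega$ are determined by the data $(\nu, \kappa_0, L)$ up to a biholomorphism of a punctured neighborhood, because the model $\ALG^*$ metric and its hyperk\"ahler triple are rigidly built from the Gibbons--Hawking ansatz over the same parameters. So I would first establish that the two $\ALG^*$ coordinate charts, after composing with an automorphism $F$ of $\fM_{2\nu}(R)$ preserving $(g^{\fM}_{\kappa_0,L}, \bm{\omega}^{\fM}_{\kappa_0,L})$, induce \emph{holomorphically} matching identifications of the ends with respect to $I$; here one uses that any two $\ALG^*$ ends with the same parameters are asymptotic to the same semi-flat model, and a standard Liouville/removable-singularity argument (as in the ALG case in \cite{CCIII}) shows the transition map extends to a biholomorphism of punctured disk bundles over the puncture.

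The second step is to upgrade this asymptotic matching to a global diffeomorphism $\Psi' : X \to X'$. Having fixed $F$ so that $\Phi' \circ F$ and $\Phi$ agree to high order at infinity, I would note that $X$ and $X'$ are the complements of the fibers $D \subset S$ and $D' \subset S'$, and that the biholomorphism of neighborhoods of $D, D'$ (which exists because both are type $\I_\nu^*$ fibers sitting in rational elliptic surfaces, and the germ of a rational elliptic surface near a fixed Kodaira fiber is essentially unique up to the choice of complex structure on the base) can be extended: a rational elliptic surface is determined by its singular-fiber configuration up to deformation, and here the relevant statement is just the existence of \emph{some} diffeomorphism $S \to S'$ carrying $D$ to $D'$ and matching the given coordinate charts near infinity, which is exactly the content already used in Theorem~\ref{t:ALGstartopology}. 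Concretely: Theorem~\ref{t:ALGstartopology} already gives a diffeomorphism $X \cong X'$; the refinement needed here is to choose that diffeomorphism so that it intertwines $\Phi$ and $\Phi' \circ F$ on the end. This is done by first building the diffeomorphism on a neighborhood of infinity as $\Phi' \circ F \circ \Phi^{-1}$, then extending it over the compact core using that the inclusion of a (collar of the) end into $X$ is a cofibration and $X, X'$ are diffeomorphic rel the identification already fixed on the end — an isotopy-extension / handle-trading argument on the $4$-manifolds with boundary $X_R$, $X'_R$, both of which are determined by $\nu$.

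The third step is bookkeeping: with $\Psi' : X \to X'$ satisfying $\Psi' \circ \Phi = \Phi' \circ F$ on $\fM_{2\nu}(R)$, and $F$ preserving $(g^{\fM}_{\kappa_0,L}, \bm{\omega}^{\fM}_{\kappa_0,L})$, one computes directly that $\Phi^*((\Psi')^* g') = (\Psi' \circ \Phi)^* g' = (\Phi' \circ F)^* g' = F^*((\Phi')^* g')$, and since $(\Phi')^* g' = g^{\fM}_{\kappa_0,L} + O(\fs^{-\mathfrak{n}})$ in all derivatives and $F$ is an isometry of $g^{\fM}_{\kappa_0,L}$ commuting (to leading order, or exactly) with the norm and connection, we get $\Phi^*((\Psi')^* g') - g^{\fM}_{\kappa_0,L} = F^*\big((\Phi')^* g' - g^{\fM}_{\kappa_0,L}\big) = O(\fs^{-\mathfrak{n}})$, with the same estimate in all $\nabla_{g^{\fM}}$-derivatives; the identical computation applies to the $\omega_i$. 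Hence $(X, (\Psi')^* g', (\Psi')^* \bm{\omega}')$ is $\ALG^*$ of order $\mathfrak{n}$ in the coordinates $\Phi$.

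I expect the main obstacle to be the second step — extending the asymptotic matching map over the compact core to an honest global diffeomorphism — because it requires knowing not just that $X$ and $X'$ are abstractly diffeomorphic (Theorem~\ref{t:ALGstartopology}) but that a diffeomorphism can be chosen compatible with a \emph{prescribed} identification of the ends; handling the diffeomorphism group of the $4$-manifold-with-boundary core relative to its boundary is the delicate point, and I would likely argue it by realizing both $S$ and $S'$ explicitly as blow-ups of $\PP^2$ (or $\PP^1 \times \PP^1$) at nine points in special position forced by the $\I_\nu^*$ configuration, so that the two sit in a common family over a connected base, yielding a diffeomorphism by Ehresmann's theorem which can be arranged to restrict correctly near infinity. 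The first step is comparatively routine given the ALG precedent in \cite{CCIII}, and the third step is a formal unwinding of definitions.
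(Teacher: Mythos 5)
Your overall architecture (compactify, invoke the topological diffeomorphism theorem, match the ends, pull back) is the same as the paper's, and your third step is the correct formal unwinding. But there is a genuine gap at exactly the point you flag as delicate, and your proposed fixes do not close it. The paper does not first build an abstract diffeomorphism $X\cong X'$ and then try to isotope it to agree with a prescribed identification of the ends; that route runs into the question of which self-diffeomorphisms of the cross-section $\mathcal{I}_\nu^3$ (or of a collar of the end) extend over the compact core, i.e.\ into mapping-class-group issues for $4$-manifolds with boundary that ``isotopy extension / handle trading'' does not resolve. Instead, the paper arranges from the start that the diffeomorphism produced by the Weierstra{\ss}-family argument (Theorem~\ref{t:elldiff}) is fiber-affine and section-preserving near infinity, by flowing along a vector field on the resolved total space whose vertical part is controlled near $\tilde D_\infty$ and which is built from the period functions $\tau_1(q,t),\tau_2(q,t)$. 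The remaining discrepancy between that diffeomorphism and the charts $\Phi,\Phi'$ is then purely a fiberwise translation, which visibly extends over the core after a cutoff.

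Even so, one more obstruction remains that your proposal does not mention at all: the translating section need not lift to a section of the line bundle $\mathbb{L}^{-1}$, because of monodromy. The paper kills this obstruction by exhibiting constants $C_1,C_2$ solving the monodromy equation, which is possible precisely because $A_\nu-\mathrm{Id}$ is invertible (see \eqref{Istarmono}); the resulting correction $F$ is a constant fiber translation, which is why it preserves the model hyperk\"ahler structure. Your step one asserts the existence of a suitable model automorphism $F$ but never identifies it or explains why the matching can be achieved by such an $F$; this invertibility computation is the missing ingredient. Separately, the quantitative interpolation between the affine map $F_1$ and the chart $\Phi$ (via the exponential map, using that the fiber diameter grows like $\sqrt{\log r}$ while the injectivity radius decays like $1/\sqrt{\log r}$ in the $\ALG^*$ case) is needed to keep the order $\mathfrak{n}$, and is absent from your outline. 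So the proposal is on the right track strategically but is not a complete proof as written.
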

\setcounter{theorem}{0}
\renewcommand*{\thetheorem}{\thesection.\arabic{theorem}}
In other words, we can view any ALG$^*$ gravitational instanton of order $2$ with parameters $\nu, \kappa_0$, and $L$  as a gravitational instanton on a \textit{fixed} manifold $X_{\nu}$ with respect to a \textit{fixed} ALG$^*$ coordinate system $\Phi_{X_\nu}$. Theorem~\ref{t:ALGstartopology} and Theorem~\ref{t:ALGstaruniform} will be proved in Section~\ref{s:Topology of gravitational instantons}. 

\subsection{ALG gravitational instantons}
\label{ss:ALG}
For background of analysis on ALG spaces, related classification results, and relations to moduli spaces of monopoles and Higgs bundles, 
we refer the reader to \cite{BB, BM, CCIII, CherkisKapustinALG, FMSW, Hein, HHM, Mazzeo} and also the references therein. 

In Definition~\ref{d:ALG-model} below, we will define the standard ALG model space
\begin{align}(\cC_{\beta,\tau,L}(R), g^{\cC},\bm{\omega}^{\cC}),
\end{align}
for parameters $L, R \in \RR_+$, $\beta \in \big\{\frac12, \frac16, \frac56, \frac14, \frac34, \frac13, \frac23\}$, and $\tau \in \mathbb{H}$, where $\mathbb{H} \subset \CC$ is the upper half-space. Except for the case that $\beta = 1/2$, the parameter $\tau$ is determined by $\beta$; 
see Table~\ref{ALGtable}. Here we just note that $\cC_{\beta,\tau,L}(R)$ is diffeomorphic to $(R, \infty) \times N_{\beta}^3$, where $N^3_{\beta}$ is a flat $3$-manifold which is a torus bundle over a circle, and the metric $g^{\cC}$ is a flat metric;  the explicit formulas are given in Subsection~\ref{ss:ALGmodel}. We let $r$ denote the coordinate on $(R, \infty)$.

\begin{definition}[ALG gravitational instanton]
\label{d:ALG-space} 
A complete hyperk\"ahler $4$-manifold $(X,g, \bm{\omega})$
is called an ALG gravitational instanton of order $\mathfrak{n} > 0$   
with parameters  $(\beta,\tau)$ as in Table~\ref{ALGtable}, and $L \in \RR_+$ if there exist $R>0$, a compact subset $X_R \subset X$, and a diffeomorphism 
$\Phi:   \cC_{\beta,\tau,L}(R) \rightarrow X \setminus X_R$,
such that
 \begin{align}
 \big|\nabla_{g^{\cC}}^k(\Phi^*g-g^{\cC})\big|_{g^{\cC}}&=O(r^{-k-\mathfrak{n}}), \\
 \big|\nabla_{g^{\cC}}^k(\Phi^*\omega_i- \omega_i^{\cC})\big|_{g^{\cC}}&=O(r^{-k-\mathfrak{n}}), \ i = 1,2,3, 
  \end{align}
as $r \to \infty$, for any $k\in\dN$.
\label{ALG-definition}
  \end{definition}

\begin{remark}
\label{r:ALG}
We note that $\ALG$ gravitational instantons satisfy the following properties:
\begin{enumerate}
\item $\Vol(B_t(x_0)) \sim t^2$ as $t \to \infty$. 

\item The tangent cone at infinity is a two dimensional cone with cone angle $2\pi\beta$.

\item The $\ALG$ order $\mathfrak{n}$ is at least $2$ in the $\I_0^*, \II, \III, \IV$ cases
and $ \mathfrak{n}=2-\frac{1}{\beta}$ in the $\II^*, \III^*, \IV^*$ cases; see~\cite[Theorem~A]{CCII}.
\end{enumerate}
\end{remark}
As a corollary of Chen-Chen's classification of ALG gravitational instantons in \cite{CCIII}, our next result shows that we can view any ALG  gravitational instanton with $\beta$ as in Table~\ref{ALGtable} as living on a fixed space $X_{\beta}$. 
\renewcommand*{\thetheorem}{D}
\begin{theorem}
\label{t:ALGtopology} If $(X, g, \bm{\omega})$ and $(X',g', \bm{\omega'})$ are any two $\ALG$ gravitational instantons with $\beta = \beta'$, then $X$ is diffeomorphic to $X'$. 
\end{theorem}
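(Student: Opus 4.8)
The plan is to deduce this from Chen--Chen's classification of ALG gravitational instantons \cite{CCIII} together with a deformation argument for rational elliptic surfaces. By \cite{CCIII}, with respect to the elliptic complex structure $I$ the instanton $(X,g,\bm{\omega})$ compactifies to a rational elliptic surface $f\colon S\to\mathbb{P}^1$ with a section by adjoining, at infinity, a single singular fiber $D=f^{-1}(p_0)$ of finite-monodromy Kodaira type $\kappa=\kappa(\beta)$; since the tangent cone at infinity has cone angle $2\pi\beta$, the type $\kappa(\beta)$ is $\I_0^*,\II,\III,\IV,\IV^*,\III^*,\II^*$ according as $\beta=\tfrac12,\tfrac16,\tfrac14,\tfrac13,\tfrac23,\tfrac34,\tfrac56$. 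In particular $X$ is diffeomorphic to $S\setminus D$, and likewise $X'$ is diffeomorphic to $S'\setminus D'$ with $D'$ of the same Kodaira type $\kappa$. It therefore suffices to prove that $S\setminus D$ and $S'\setminus D'$ are diffeomorphic.

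First I would pass to a compact model. Fix a small closed disk $\overline{\Delta}\subset\mathbb{P}^1$ centered at $p_0$ whose interior contains no critical value of $f$ other than $p_0$, put $N(D)=f^{-1}(\overline{\Delta})$, and set $\overline{W}=S\setminus\mathrm{int}\,N(D)$, a compact $4$-manifold with boundary $\partial N(D)\cong N^3_{\beta}$. Since $f$ restricts to a locally trivial $T^2$-bundle over the half-open annulus $\overline{\Delta}\setminus\{p_0\}\cong S^1\times(0,1]$, the punctured neighborhood $N(D)\setminus D=f^{-1}(\overline{\Delta}\setminus\{p_0\})$ is an external collar of $\partial N(D)$, so $S\setminus D$ is diffeomorphic to the interior of $\overline{W}$. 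Hence it is enough to produce a diffeomorphism $\overline{W}\cong\overline{W}'$ of compact manifolds with boundary.

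Next I would connect the pairs $(S,D)$ and $(S',D')$ through a smooth family. Using the classical structure theory of elliptic surfaces --- realizing a rational elliptic surface with section in Weierstrass form $y^2=x^3+A(t)x+B(t)$ with $\deg A\le 4$, $\deg B\le 6$, and prescribing the vanishing orders of $A$ and $B$ at $t=0$ corresponding to type $\kappa$ --- the set of such surfaces carrying a fiber of type $\kappa$ over the fixed point $p_0$ is a connected parameter space: it is a Zariski-open subset (cut out by non-degeneracy of the reduced local equation at $t=0$, by $\Delta=4A^3+27B^2\not\equiv 0$, and by admissibility of the fiber over $\infty$) of a linear space, hence irreducible and connected in the analytic topology. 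Choosing a path $s\mapsto(S_s,D_s)$, $s\in[0,1]$, in this space joining $(S,D)$ to $(S',D')$ along which $D_s=f_s^{-1}(p_0)$ has constant Kodaira type $\kappa$, no critical value other than $p_0$ approaches $p_0$, so by compactness of $[0,1]$ one may shrink $\overline{\Delta}$ so that it contains no critical value of $f_s$ other than $p_0$ for every $s$. By equisingularity of the family over $p_0$, the minimal resolutions assemble into a smooth family $\{S_s\}_{s\in[0,1]}$, and hence $\overline{W}_s:=S_s\setminus\mathrm{int}\,f_s^{-1}(\overline{\Delta})$ is a smooth family of compact manifolds with boundary. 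Ehresmann's fibration theorem for manifolds with boundary then gives $\overline{W}_0\cong\overline{W}_1$, so $X\cong\mathrm{int}(\overline{W}_0)\cong\mathrm{int}(\overline{W}_1)\cong X'$.

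The hard part will be the penultimate step: showing that the parameter space of rational elliptic surfaces with a type-$\kappa$ fiber over $p_0$ is connected, and, more delicately, that along such a path the Kodaira type at $p_0$ stays fixed while the minimal resolutions vary smoothly --- an equisingularity statement for families of elliptic surfaces, so that no base change of Brieskorn type is needed. By contrast, the singular fibers of $f_s$ away from $p_0$ require no control beyond lying outside $\overline{\Delta}$, since they sit in the interior of $\overline{W}_s$ where Ehresmann's theorem applies verbatim; collisions of $\I_1$ fibers there, or even their coalescence into other Kodaira fibers, correspond to smooth families of resolved total spaces and so cause no difficulty.
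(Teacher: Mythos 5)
Your overall strategy is the same as the paper's: compactify via Chen--Chen, reduce to comparing the complements of the distinguished fiber in two rational elliptic surfaces with a section, connect the two surfaces through the connected space of Weierstra{\ss} data with prescribed vanishing orders at $p_0$, and conclude with an Ehresmann-type argument. Your connectedness claim is correct for the ALG fiber types, since for $\I_0^*$, $\II$, $\III$, $\IV$ and their starred versions the relevant stratum really is a Zariski-open subset of a linear space of coefficients (note this would fail verbatim for the $\I_\nu^*$ strata with $\nu\geq 1$, which are cut out by nontrivial equations, but those belong to the ALG$^*$ statement). Your passage to the compact manifold with boundary $\overline{W}$ is a reasonable repackaging of the paper's cut-off vector field construction.

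The gap is in your last sentence. When singular fibers away from $p_0$ collide along the path --- say two $\I_1$ fibers coalescing into an $\I_2$ --- the Weierstra{\ss} model acquires an RDP (here an $A_1$ point), and the minimal resolutions do \emph{not} automatically assemble into a smooth family: in the local model $\{xy=z^2-t\}$ the nearby fibers are already smooth, so no modification of the total space restricts to the minimal resolution on the central fiber while leaving the nearby fibers alone; one must first pull back by $t\mapsto t^2$ and take a small resolution of the resulting threefold ordinary double point. This is precisely the Brieskorn--Tjurina simultaneous resolution after base change that the paper invokes, and it cannot be dismissed as causing ``no difficulty.'' Without it your Ehresmann step has no proper submersion from a smooth total space to apply to. You can sidestep the issue at interior parameter values by perturbing the path off the bad locus (a proper subvariety of the connected stratum, hence of real codimension at least $2$), but not at the endpoints: if $S$ itself has a fiber other than $\I_1$ away from $p_0$, you must connect it to a surface with only $\I_1$ fibers there by a one-parameter degeneration in which the central fiber is resolved simultaneously with the already-smooth nearby fibers, and that is exactly where the base change is unavoidable.
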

\setcounter{theorem}{0}
\renewcommand*{\thetheorem}{\thesection.\arabic{theorem}}
This will be proved in Section \ref{s:Topology of gravitational instantons}; see Theorem~\ref{t:Topology of gravitational instantons}.
Our next result is the following refinement of this, which takes into account the $\ALG$ coordinates. 
\renewcommand*{\thetheorem}{E}
\begin{theorem} 
\label{t:ALGuniform}
Let $(X,g,\bm{\omega})$ and $(X',g',\bm{\omega}')$ be $\ALG$ gravitational instantons of order~$\mathfrak{n}$ with the same $\beta, \tau, L$, with
 $\ALG$ coordinates $\Phi: \cC_{\beta, \tau, L}(R)   \rightarrow  X \setminus X_R$
and  $\Phi': \cC_{\beta,\tau,L}(R) \rightarrow  X' \setminus X'_R$. Then there exists a mapping $F :  \cC_{\beta,\tau,L}(R) \rightarrow  \cC_{\beta,\tau,L}(R)$
and a diffeomorphism $\Psi' : X \rightarrow X'$ such that $F$ preserves the model hyperk\"ahler structure and $\Psi' \circ \Phi = \Phi' \circ F$ when restricted to $\cC_{\beta, \tau, L}(R)$ for a sufficiently large $R$. Consequently, the hyperk\"ahler structure $(X,(\Psi')^* g', (\Psi')^* \bm{\omega}')$ is $\ALG$ of order~$\mathfrak{n}$ in the $\ALG$ coordinates defined by $\Phi$.
\end{theorem}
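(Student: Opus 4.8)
The plan is to reduce Theorem~\ref{t:ALGuniform} to the classification result of Chen-Chen together with the analogous statement for $\ALG^*$ (Theorem~\ref{t:ALGstaruniform}), whose proof strategy we will mirror. First I would use \cite{CCIII} to compactify each of $X$ and $X'$ to rational elliptic surfaces $S$ and $S'$ with a singular fiber $D$, $D'$ of the Kodaira type corresponding to $\beta$, and note that the K\"ahler forms $\omega_1$, $\omega_1'$ with respect to the elliptic complex structures are $\partial\bar\partial$-cohomologous to restrictions of K\"ahler forms from $S$, $S'$. The point is that the asymptotic data $(\beta,\tau,L)$, the order $\mathfrak n$, and the periods of $\bm\omega$ at infinity are recorded by the $\ALG$ coordinate map, and two such compactified surfaces carrying the same data are related by a biholomorphism preserving the fibers near infinity; this is essentially a Torelli-type rigidity statement that I would extract from the classification in \cite{CCIII} combined with the structure of rational elliptic surfaces.

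Next I would construct the model-space self-map $F$. The group of hyperk\"ahler automorphisms of the flat model $(\cC_{\beta,\tau,L}(R), g^{\cC}, \bm{\omega}^{\cC})$ is a finite-dimensional Lie group: it contains the translations along the nilmanifold-type fiber $N^3_\beta$ and the $r$-independent rotations compatible with the cone angle $2\pi\beta$. The discrepancy between the two given $\ALG$ coordinate systems $\Phi$ and $\Phi'$, once pushed to a common compactification, is asymptotically an element of this automorphism group up to an error of order $O(r^{-\mathfrak n})$; I would take $F$ to be the corresponding exact model automorphism and absorb the remaining decaying discrepancy into the diffeomorphism $\Psi'$. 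Concretely, one first builds a diffeomorphism $\Psi': X \to X'$ that agrees with $\Phi' \circ F \circ \Phi^{-1}$ on $X\setminus X_R$ (for $R$ large) by comparing the two rational elliptic surface compactifications and interpolating in the interior using that both $X$ and $X'$ are diffeomorphic by Theorem~\ref{t:ALGtopology}. The identity $\Psi'\circ\Phi = \Phi'\circ F$ near infinity then holds by construction.

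Finally, I would verify the order statement: since $F$ preserves $(g^{\cC},\bm\omega^{\cC})$ exactly, we have
\begin{align}
\Phi^*\big((\Psi')^* g' - g\big) &= \Phi^*(\Psi')^* g' - \Phi^* g = F^*(\Phi')^* g' - \Phi^* g \\
&= F^*\big((\Phi')^* g' - g^{\cC}\big) + \big(F^* g^{\cC} - \Phi^* g\big) \\
&= F^*\big((\Phi')^* g' - g^{\cC}\big) - \big(\Phi^* g - g^{\cC}\big),
\end{align}
and each term on the right is $O(r^{-k-\mathfrak n})$ in $g^{\cC}$-norm (using that $F$ is an isometry of $g^{\cC}$, so conjugating by $F$ preserves decay rates, together with Definition~\ref{d:ALG-space} applied to $\Phi$ and to $\Phi'$). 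The same computation works for each $\omega_i$. This shows $(X,(\Psi')^*g',(\Psi')^*\bm\omega')$ is $\ALG$ of order $\mathfrak n$ in the coordinates $\Phi$.

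The main obstacle I anticipate is the rigidity input: showing that the two compactifications $S$ and $S'$, equipped with matching asymptotic data, admit a biholomorphism respecting the elliptic fibrations near $D$ and $D'$ — and that the induced map on a neighborhood of infinity differs from the model automorphism $F$ only by a term of order $O(r^{-\mathfrak n})$. Controlling this error, rather than merely getting a smooth match, requires unwinding the precise normalization of $\ALG$ coordinates in \cite{CCIII} and the uniqueness part of the Tian-Yau/Hein construction; it is the step where the fixed parameters $\beta,\tau,L$ and the common order $\mathfrak n$ are genuinely used. Once this is in hand, the construction of $\Psi'$ and the decay estimates are formal, paralleling the $\ALG^*$ argument in Section~\ref{s:Topology of gravitational instantons}.
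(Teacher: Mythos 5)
Your overall architecture --- compactify, use the topology theorem to get a diffeomorphism that is fiber-affine near infinity, take $F$ to be an exact automorphism of the flat model, absorb the decaying discrepancy into $\Psi'$, and then verify the order statement by the formal computation using that $F$ is a $g^{\cC}$-isometry --- does match the paper, and your final decay computation is exactly right. However, the step you flag as the ``main obstacle'' rests on a false premise, and the actual key mechanism is missing. There is no Torelli-type rigidity here: two $\ALG$ gravitational instantons with the same $(\beta,\tau,L)$ and order $\mathfrak{n}$ do \emph{not} in general compactify to biholomorphic rational elliptic surfaces (the complex structures genuinely vary), so one cannot ``extract a biholomorphism preserving the fibers near infinity'' from \cite{CCIII}. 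The paper only produces a \emph{smooth} diffeomorphism $\Psi:S\setminus D\to S'\setminus D'$ commuting with the projections and affine on the fibers near infinity (Theorem~\ref{t:elldiff}, proved by connectedness of the Weierstra{\ss} moduli and flowing a vector field), and all subsequent comparisons are smooth, not holomorphic.

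The genuine content you are missing is how the model automorphism $F$ is identified. One first builds, via the holomorphic function $z$ asymptotic to $z_{\cC}=\mathscr{U}^{1/\beta}$ from \cite{CCII} and the period estimates $|\tau_i-\tau_{i,\model}|=O(|\mathscr{U}|^{-\mathfrak{n}+\epsilon})$, fiberwise-affine comparison maps $F_2, F_2'$ interpolating between the model and the $\ALG$ coordinates; this is where the $O(r^{-\mathfrak n})$ control actually comes from, rather than being assumed. The composite $(F_2')^{-1}\circ\Psi\circ F_2$ then carries the zero section to a section of the $T^2$-fibration near infinity, and the obstruction to homotoping this away (i.e.\ to lifting it to a section of $\mathbb{L}^{-1}$) is an integer vector $(m,n)$ measured against the monodromy $A$. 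The map $F$ is precisely the fiber translation by $(C_1,C_2)$ solving $(A-\mathrm{Id})(C_1,C_2)^T=(m,n)^T$, which exists because $A-\mathrm{Id}$ is invertible for every $\ALG$ monodromy; a translation in the fibers preserves the semi-flat hyperk\"ahler structure, which is why $F$ is a model automorphism. Without this monodromy argument, your assertion that the discrepancy between $\Phi$ and $\Phi'$ ``is asymptotically an element of the automorphism group up to $O(r^{-\mathfrak n})$'' is exactly the statement to be proved, not an available input.
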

\setcounter{theorem}{0}
\renewcommand*{\thetheorem}{\thesection.\arabic{theorem}}
In other words, we can view any ALG gravitational instanton of order $\mathfrak{n}$ with parameters $\beta$, $\tau$, and $L$ as a gravitational instanton on a \textit{fixed} space $X_{\beta}$ with respect a \textit{fixed} ALG coordinate system~$\Phi_{X_\beta}$.

As mentioned above, when $\beta > 1/2$, the ALG order $\mathfrak{n}$ can be improved to $2 - \frac{1}{\beta}$. Our next results examines in this case the relationship between ALG gravitational instantons of order $2 - \frac{1}{\beta}$ and those of order $2$. Given an $\ALG$ gravitational instanton $(X_{\beta},g,\bm{\omega})$ with parameters $(\beta, \tau)$ as in Table~\ref{ALGtable} and $L >0$, we consider the periods 
\begin{align}
[\bm{\omega}] \in H^2_{dR}(X_{\beta}) \times H^2_{dR}(X_{\beta}) \times H^2_{dR}(X_{\beta}).
\end{align}
In Section \ref{s:order}, we will prove the following.
\renewcommand*{\thetheorem}{F}
\begin{theorem}
\label{t:order} Assume that $\beta > 1/2$, and let 
$(X_{\beta},g,\bm{\omega})$ be an $\ALG$ gravitational instanton with parameters $(\beta, \tau)$ as in Table~\ref{ALGtable} and $L >0$, which is of order $\mathfrak{n} > 0$ in the $\ALG$ coordinate system $\Phi$. Then there exists a diffeomorphism $F: \cC_{\beta,\tau,L}(R) \rightarrow \cC_{\beta,\tau,L}(R)$ homotopic to the identity, and a $2$-parameter family of $\ALG$ gravitational instantons  $(X_{\beta},g_{a,b},\bm{\omega}_{a,b})$ with $(a,b) \in \RR^2$  containing $(X_{\beta},g,\bm{\omega})$, all with the same parameters $\beta$, $\tau$, and $L$, and the same periods $[\bm{\omega}_{a,b}] = [\bm{\omega}]$, which are $\ALG$ of order $2 - \frac{1}{\beta}$ in the $\Phi \circ F$ coordinates when $(a,b) \neq (0,0)$, and $\ALG$ of order $2$ in the $\Phi \circ F$ coordinates when $(a,b) = (0,0)$. 
\end{theorem}
\setcounter{theorem}{0}
\renewcommand*{\thetheorem}{\thesection.\arabic{theorem}}
This result is an improvement of \cite[Theorem~1.7]{CCIII}.

\subsection{Acknowledgements} The authors would like to thank Hans-Joachim Hein, Rafe Mazzeo, Song Sun, and Ruobing Zhang for enlightening discussions.

\section{The model hyperk\"ahler structures}
\label{s:model-metric}
In this section, we explain some properties of $\ALG^*$ gravitational instantons in more detail. In Hein's thesis \cite{Hein}, he proved that by choosing a nice background metric, the corresponding Tian-Yau metric \cite[Theorem~1.1]{TianYau} is a gravitational instanton asymptotic to Greene-Shapere-Vafa-Yau's semi-flat ansatz in a neighborhood of a singular fiber of type $\I_{\nu}^*$ on a rational elliptic surface. One of the main goals of this section is to show that  Greene-Shapere-Vafa-Yau's semi-flat ansatz hyperk\"ahler structure is asymptotic to a Gibbons-Hawking ansatz hyperk\"ahler structure, thus the $\I_{\nu}^*$ metrics in~\cite{Hein} are $\ALG^*$ according to Definition~\ref{d:ALGstar}. We note this was also remarked in~\cite[page~197]{Foscolo2020}.

\subsection{Gibbons-Hawking construction}
\renewcommand{\t}[1]{\theta_{#1}}
In this subsection, we review the Gibbons-Hawking construction of the ALG$^*$ model metric. See \cite[Section~2]{CVZ2} for more details. Let $\nu$ be any positive integer.
Recall that the Heisenberg nilmanifold $\Nil^3_{2\nu}$ of degree $2\nu$ is the quotient of $\RR^3$ by the following actions
\begin{align}
\label{group1}
(\theta_1,\theta_2 , \theta_3) &\mapsto (\theta_1 + 2 \pi, \theta_2, \theta_3 ),\\
\label{group2}
 (\theta_1,\theta_2 , \theta_3)&\mapsto (\theta_1, \theta_2+ 2 \pi, \theta_3 + 2 \pi \theta_1),\\
\label{group3}
 (\theta_1,\theta_2 , \theta_3)&\mapsto ( \theta_1, \theta_2, \theta_3 +  2 \pi^2 \nu^{-1} ).
\end{align}
We consider
\begin{align}
\Theta \equiv \frac{\nu}{\pi} (d \theta_3 - \theta_2 d\theta_1), \quad V \equiv \kappa_0 + \frac{\nu}{\pi} \log r, \quad r\in(R,\infty),\ \kappa_0\in\dR, \ R>e^{\frac{\pi}{\nu}(1-\kappa_0)},
\end{align}
on the manifold
\begin{align}
S^1_{\theta_3} \to \widehat{\fM}_{2\nu}(R) \equiv (R, \infty) \times \Nil^3_{2\nu} \to \tilde{U} \equiv (\RR^2 \setminus \overline{B_{R}(0)}) \times S^1_{\theta_2},
\end{align}
where $(r,\theta_1)$ are polar coordinates on $\RR^2$.
The Gibbons-Hawking metric on $\widehat{\fM}_{2\nu}(R)$ is
\begin{align}
\label{metricexp}
g^{\widehat{\fM}}_{\kappa_0} = V ( dx^2 + dy^2 + d\theta_2^2) + V^{-1} \Theta^2 = V ( dr^2 + r^2 d\theta_1^2 + d \theta_2^2) + V^{-1} \frac{\nu^2}{\pi^2} \Big( d \theta_3 - \theta_2 d \theta_1 \Big)^2,
\end{align}
where $x + \i y \equiv r\cdot e^{\i \t1}$.

Consider an orthonormal basis
\begin{align}
\{ E^1, E^2, E^3, E^4 \} = \{ V^{1/2} dx, V^{1/2} dy , V^{1/2} d\t2, V^{-1/2} \Theta\}.
\end{align}
Then the model hyperk\"ahler forms on $\widehat{\fM}_{2\nu}(R)$ are given by
\begin{align}
\label{mkf1}
\omega_I&=\omega_{1, \kappa_0}^{\widehat{\fM}}  = E^1 \wedge E^2 + E^3 \wedge E^4= V dx \wedge dy + d \t2 \wedge \Theta,\\
\label{mkf2}
\omega_J&= \omega_{2, \kappa_0}^{\widehat{\fM}} = E^1 \wedge E^3 - E^2 \wedge E^4 = V dx \wedge d \t2 - dy \wedge \Theta,\\
\label{mkf3}
\omega_K&=\omega_{3, \kappa_0}^{\widehat{\fM}} = E^1 \wedge E^4 + E^2 \wedge E^3 = dx \wedge \Theta + V dy \wedge d \t2.
\end{align}
The $\ZZ_2$-action
\begin{align}
\label{nilaction}
\iota (r, \t1,\t2, \t3) = (r, \t1 + \pi, - \t2, - \t3)
\end{align}
induces an automorphism of the hyperk\"ahler structure, and
we define the $\ALG^*_{\nu}$ model space as
\begin{align}
(\fM_{2\nu}(R), g_{\kappa_0}^{\fM},\omega_{1,\kappa_0}^{\fM}, \omega_{2, \kappa_0}^{\fM}, \omega_{3,\kappa_0}^{\fM})
\equiv
(\widehat{\fM}_{2\nu}(R), g_{\kappa_0}^{\widehat{\fM}}, \omega_{1, \kappa_0}^{\widehat{\fM}}, \omega_{2, \kappa_0}^{\widehat{\fM}}, \omega_{3,\kappa_0}^{\widehat{\fM}})/\iota.
\end{align}
For any scaling parameter $L>0$, we define
\begin{align}
(\fM_{2\nu}(R), g_{\kappa_0, L}^{\fM}, \omega_{1, \kappa_0, L}^{\fM}, \omega_{2, \kappa_0, L}^{\fM}, \omega_{3,\kappa_0, L}^{\fM}) \equiv (\fM_{2\nu}(R), L^2 \cdot g_{\kappa_0}^{\fM}, L^2 \cdot \omega_{1, \kappa_0}^{\fM}, L^2 \cdot \omega_{2, \kappa_0}^{\fM}, L^2 \cdot \omega_{3,\kappa_0}^{\fM}).
\end{align}

\subsection{Semi-flat structure}
\label{ss:semi-flat}
In this subsection, we show that the ALG$^*$ model space has a Greene-Shapere-Vafa-Yau semi-flat structure \cite{GSVY, Hein}. 

We first explain Kodaira's model for an I$_{\nu}^*$ fiber. Assume that we have a local 
elliptic fibration $f: U \rightarrow B_\xi$, where $B_\xi$ is a disc with coordinate $\xi$ and the monodromy around the origin is of type I$_{\nu}^*$. Let $\pi : B_u \rightarrow B_\xi$ denote the double cover branched over the origin given by $\xi = u^2$. Then $f$ pulls back to an elliptic 
fibration $f': U' \rightarrow B_u$ with a singular fiber of type I$_{2 \nu}$ over the origin.
By \cite{Kodaira1963}, we can identify $f'$ over the punctured disc with 
\begin{align}
 (B_u^* \times \CC_v) /(\ZZ\oplus \ZZ),
\end{align}
where $\ZZ\oplus \ZZ$ acts by 
\begin{equation}
(m,n) \cdot (u,v)=(u,v+m\tau_{1}(u)+n\tau_{2}(u)),
\end{equation}
and $\tau_1(u) = 1, \tau_2(u) = - \i \frac{\nu}{\pi } \log u$.  
The $\ZZ_2$-action on $U'$ is given by $(u,v) \mapsto (-u, -v)$. 
\begin{remark}
\label{r:periods} 
If we use the holomorphic coordinates $(\xi,w) = (u^2,uv)$ on the quotient, 
then the periods are 
\begin{align}
\label{Istartau}
\tau_{1,\model}(\xi) = \xi^{1/2}, \quad \tau_{2,\model}(\xi) =  \frac{\nu}{2\pi \i} \xi^{1/2} \log (\xi).
\end{align}
  Furthermore, the monodromy matrix is given by 
\begin{align}
\label{Istarmono}
A_{\nu} = 
\left(
\begin{matrix}
-1  & - \nu \\
0 & -1 \\
\end{matrix}
\right).
\end{align}
\end{remark}
Let $\Omega' = g du \wedge dv$ be a meromorphic $2$-form on $U'$ away from the central fiber with 
$g(u) = u^{-2} k(u^2)$, where $k$ is a regular holomorphic function on $B_{\xi}$ with $k(0) \neq 0$. 
Clearly $\Omega'$ is invariant under the $\ZZ_2$-action, so descends to a $2$-form $\Omega$ on $U$.  

The semi-flat ansatz from \cite[Equation 3.14]{Hein} is given by 
\begin{align}
\label{heinsemi}
\omega_{\SF, \epsilon}' = \i |k(u^2)|^2 \frac{\nu |\log |u||}{\pi \epsilon} \frac{du \wedge d \bar{u}}{|u|^4} + \frac{\i}{2} \frac{\pi \epsilon}{\nu |\log|u||} (dv - \Gamma du)
\wedge ( d\bar{v} - \overline{\Gamma}d \bar{u}),
\end{align}
where $\Gamma(u,v)$ is a function given by 
\begin{align}
\Gamma(u,v) = \frac{1}{\i} \frac{ \Ima(v)}{u |\log|u||}. 
\end{align}
Note that this K\"ahler form is defined upstairs, but descends to the quotient space.

We choose real coordinates $(v_1,v_2)$ defined by 
\begin{align}
\label{vdef}
v = v_1\tau_1 + v_2 \tau_2 = v_1 + v_2 \frac{\nu}{\pi \i } \log u.
\end{align}
There is an $\RR$-action with period $\sqrt{\epsilon}$ given by
\begin{align}
(u, v_1, v_2) \mapsto (u, v_1 + \epsilon^{-1/2} t, v_2), 
\end{align}
which leaves $\omega_{\SF,\epsilon}'$ and $\Omega'$ invariant, and with associated vector field 
\begin{align}
Y =  \frac{1}{\sqrt{\epsilon}}  \frac{\p}{\p v_1} = \frac{1}{\sqrt{\epsilon}} \Big( \frac{\p}{\p v} +  \frac{\p}{\p \bar{v}} \Big).
\end{align}
This action is Hamiltonian for each of the three K\"ahler forms
\begin{align}
\omega_1 = \omega_{\SF,\epsilon}', \quad \omega_2 = \Rea(\Omega'), \quad \omega_3 = \Ima(\Omega'). 
\end{align}
We next compute the Hamiltonian functions $H_i$, that is, 
\begin{align}
\omega_i(Y, \cdot) = d H_i,
\end{align}
for $i = 1, 2, 3$. 
\begin{proposition} The moment map for the tri-Hamiltonian action associated to $Y$ 
\begin{align} 
\mu : U' \setminus (f')^{-1}(0) \rightarrow \RR \times \CC 
\end{align}
is given by 
\begin{align}
\mu = (H_1,H_2 + \i H_3) = \Big(\epsilon^{1/2} v_2 , - \epsilon^{-1/2} \int u^{-2} k(u^2) du \Big).
\end{align}
\end{proposition}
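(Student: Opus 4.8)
The plan is to compute the three interior products $\iota_Y\omega_i$ directly and produce explicit primitives for them. The one organizational point that makes this transparent is to first rewrite both $\omega_{\SF,\epsilon}'$ of \eqref{heinsemi} and $\Omega'$ in the real coordinates $(v_1,v_2)$ of \eqref{vdef}; in these coordinates $Y=\epsilon^{-1/2}\partial_{v_1}$ (since $\partial_{v_1}=\partial_v+\partial_{\bar v}$), so $\iota_Y$ merely reads off the $dv_1$-coefficient of a $2$-form.

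The computational heart is the identity
\[
dv-\Gamma\,du=dv_1+\tau_2\,dv_2,\qquad d\bar v-\overline{\Gamma}\,d\bar u=dv_1+\bar\tau_2\,dv_2 .
\]
Because $v=v_1+v_2\tau_2(u)$ with $v_1,v_2$ real, one has $\Ima v=v_2\,\Ima\tau_2=v_2\,\tfrac{\nu}{\pi}\lvert\log\lvert u\rvert\rvert$ near $u=0$; comparing with $\tau_2'(u)=\tfrac{\nu}{\pi\i u}$ gives $\Gamma=\tfrac{1}{\i}\tfrac{\Ima v}{u\lvert\log\lvert u\rvert\rvert}=v_2\,\tau_2'(u)$, so the $du$-terms in $dv=dv_1+\tau_2\,dv_2+v_2\tau_2'\,du$ are cancelled exactly by $\Gamma\,du$ (this is just the statement that $\Gamma\,du$ is the semi-flat connection term in these coordinates). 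Combining this with $\bar\tau_2-\tau_2=-2\i\,\Ima\tau_2=-2\i\,\tfrac{\nu}{\pi}\lvert\log\lvert u\rvert\rvert$, the second summand of \eqref{heinsemi} collapses to $\epsilon\,dv_1\wedge dv_2$ while the first summand is a multiple of $du\wedge d\bar u$; hence $\omega_1=\omega_{\SF,\epsilon}'=(\text{multiple of }du\wedge d\bar u)+\epsilon\,dv_1\wedge dv_2$. Contracting with $Y=\epsilon^{-1/2}\partial_{v_1}$ annihilates the $du\wedge d\bar u$ term and gives $\iota_Y\omega_1=\epsilon^{1/2}\,dv_2=d(\epsilon^{1/2}v_2)$, so $H_1=\epsilon^{1/2}v_2$.

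For the complex form, $\Omega'=g\,du\wedge dv=g\,du\wedge dv_1+g\,\tau_2\,du\wedge dv_2$, and contracting with $Y$ kills the $dv_2$ term and yields $\iota_Y\Omega'=-\epsilon^{-1/2}g(u)\,du=-\epsilon^{-1/2}u^{-2}k(u^2)\,du$. Since $u^{-2}k(u^2)$ depends on $u$ alone and is holomorphic away from $u=0$, this $(1,0)$-form equals $d$ of its holomorphic antiderivative, and since $k$ is a function of $u^2$ the Laurent expansion of $u^{-2}k(u^2)$ contains no $u^{-1}$ term, so that antiderivative is single-valued. Writing $\iota_Y\Omega'=\iota_Y\omega_2+\i\,\iota_Y\omega_3=d(H_2+\i H_3)$ then identifies $H_2+\i H_3=-\epsilon^{-1/2}\int u^{-2}k(u^2)\,du$ up to an additive constant. (That each $\iota_Y\omega_i$ is closed is automatic from $\mathcal{L}_Y\omega_i=0$ and $d\omega_i=0$ by Cartan's formula; the primitives above then establish exactness, so $\mu$ is a moment map for the tri-Hamiltonian action.)

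This is more a careful computation than a deep obstacle; the points requiring attention are the sign convention $\lvert\log\lvert u\rvert\rvert=-\log\lvert u\rvert$ near the origin, the reality of $v_1,v_2$ (which is what gives $\bar v=v_1+v_2\bar\tau_2$), and the fact that the bookkeeping is cleanest performed on the cover $B_u^*\times\CC_v$: there the forms $\omega_{\SF,\epsilon}'$ and $\Omega'$ pull back and the formulas above hold verbatim, and one checks afterward that the exact $1$-forms $dH_i=\iota_Y\omega_i$ descend to $U'\setminus(f')^{-1}(0)$, as they must since $\omega_i$ and $Y$ do.
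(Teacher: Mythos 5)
Your proposal is correct and follows essentially the same route as the paper: both arguments hinge on the observation that $\Gamma\,du$ cancels the $du$-component of $dv$ in the $(v_1,v_2)$ coordinates, yielding $\iota_Y\omega_{\SF,\epsilon}'=\sqrt{\epsilon}\,dv_2$ and $\iota_Y\Omega'=-\epsilon^{-1/2}u^{-2}k(u^2)\,du$. The only differences are organizational (you simplify the forms before contracting, the paper contracts first), plus your added remarks on single-valuedness of the antiderivative and descent to the quotient, which are harmless refinements.
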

\begin{proof}
We first compute 
\begin{align}
\begin{split}
\omega_{\SF,\epsilon}'(Y, \cdot) &=  \frac{\i}{2} \frac{\pi \sqrt{\epsilon}}{\nu |\log|u||}  
\Big( d \bar{v} - \overline{\Gamma} d \bar{u} - dv  + \Gamma du \Big)\\
& =  \frac{\i}{2} \frac{\pi \sqrt{\epsilon}}{\nu |\log|u||}  
 \big(  2 \i  \Ima ( - dv + \Gamma du) \Big) 
=    \frac{\pi \sqrt{\epsilon}}{ \nu |\log|u||}  
 \big(   \Ima (  dv - \Gamma du) \big).
\end{split}
\end{align}
Differentiating \eqref{vdef} yields
\begin{align}
\label{e:dv}
d v = d v_1 + \Big( \frac{\nu}{\pi \i } \log u\Big) dv_2
+ \Big(v_2 \frac{\nu}{\pi \i u }  \Big) d u .
\end{align}
Noting that 
\begin{align}
\Ima(v) = \Ima \Big(  v_1 + v_2 \frac{\nu}{\pi \i } \log u\Big)
= - v_2 \frac{\nu}{\pi} \log|u|,
\end{align}
the term involving $\Gamma$ is given by
\begin{align}
- \Gamma du = -  \frac{1}{\i} \frac{ \Ima(v)}{u |\log|u||} du 
= \i v_2 \frac{\nu}{\pi u}  du.
\end{align}
Consequently, 
\begin{align}
\omega_{\SF,\epsilon}'(Y, \cdot) & =   \frac{\pi \sqrt{\epsilon}}{ \nu |\log|u||}  
 \Ima \Big( \frac{\nu}{\pi \i} \log(u) dv_2  \Big) = \sqrt{\epsilon} dv_2.
\end{align}
Therefore, we can choose  $H_1 = \epsilon^{1/2} v_2$. 

Next, we compute 
\begin{align}
\begin{split}
\Omega'(Y, \cdot) &=  u^{-2} k(u^2) du \wedge dv \frac{1}{\sqrt{\epsilon}}\Big( \frac{\p}{\p v} +  \frac{\p}{\p \bar{v}}, \cdot \Big)\\
&= -  \epsilon^{-1/2} u^{-2} k(u^2) du
= d \Big(  - \epsilon^{-1/2} \int u^{-2} k(u^2) du \Big),
\end{split}
\end{align}
so we can choose 
\begin{align}
H_2 + \i H_3 =  - \epsilon^{-1/2} \int u^{-2} k(u^2) du,
\end{align}
where $\int u^{-2}k(u^2) du$ is any $u$-antiderivative of $u^{-2} k(u^2)$. 

\end{proof}

\begin{proposition} 
\label{p:semi-flat}
The semi-flat metric is $\ALG^*$ near infinity, as defined in Definition~\ref{d:ALGstar}. 
\end{proposition}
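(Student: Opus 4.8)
The plan is to exhibit an explicit diffeomorphism from a Gibbons-Hawking model $\fM_{2\nu}(R)$ (with a suitable shift of the parameter $\kappa_0$ and choice of $L$) onto a neighborhood of infinity of the semi-flat space, under which the three semi-flat K\"ahler forms $(\omega_{\SF,\epsilon}', \Rea(\Omega'), \Ima(\Omega'))$ get identified with $(\omega_{1,\kappa_0,L}^{\widehat{\fM}}, \omega_{2,\kappa_0,L}^{\widehat{\fM}}, \omega_{3,\kappa_0,L}^{\widehat{\fM}})$ up to error terms decaying like $\fs^{-\mathfrak{n}}$. The key observation is that the moment map $\mu = (H_1, H_2 + \i H_3)$ computed in the preceding proposition is exactly the Gibbons-Hawking base coordinate: in the Gibbons-Hawking picture the hyperk\"ahler forms \eqref{mkf1}--\eqref{mkf3} are written in terms of base coordinates $(x,y,\theta_2)$ and the harmonic function $V$, and a semi-flat structure admitting a tri-Hamiltonian circle (or $\RR$) action is automatically of Gibbons-Hawking form with $V$ read off from the metric in the $Y$-direction. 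So I would set $\theta_2$ proportional to $H_1 = \epsilon^{1/2} v_2$ and $x + \i y$ proportional to $H_2 + \i H_3 = -\epsilon^{-1/2}\int u^{-2} k(u^2)\,du$.

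First I would analyze the antiderivative $\int u^{-2}k(u^2)\,du$. Writing $k(u^2) = k(0) + O(u^2)$, this antiderivative is $-k(0) u^{-1} + (\text{holomorphic in } u^2) + c$, so after absorbing the constant and rescaling, $H_2 + \i H_3$ behaves like a nonzero constant times $u^{-1}$ plus lower-order terms. Hence $|x + \i y| \sim |u|^{-1}$, so $r := |x+\i y| \to \infty$ corresponds to $u \to 0$, and $\log r \sim -\log|u| = |\log|u||$, which is precisely the behavior needed to match $V = \kappa_0 + \frac{\nu}{\pi}\log r$ against the semi-flat coefficient $\frac{\nu|\log|u||}{\pi\epsilon}$ (up to the scaling $L$ coming from $\epsilon$). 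Next I would compute $g^{\SF}(Y,Y)$ from \eqref{heinsemi}: since $Y = \epsilon^{-1/2}(\p/\p v + \p/\p\bar v)$, one gets $g^{\SF}(Y,Y) = \epsilon^{-1}\cdot \frac{\pi\epsilon}{\nu|\log|u||} = \frac{\pi}{\nu|\log|u||}$, so the Gibbons-Hawking potential is $V = g^{\SF}(Y,Y)^{-1} = \frac{\nu}{\pi}|\log|u||$, matching $\kappa_0 + \frac{\nu}{\pi}\log r$ with $\kappa_0 = 0$ to leading order and an $O(1)$ correction from the $u$-to-$r$ change of variables. Then the connection $1$-form $\Theta$ of the Gibbons-Hawking description is dual to $Y$, i.e. $\Theta = g^{\SF}(Y,\cdot)/g^{\SF}(Y,Y)$; I would check it agrees with $\frac{\nu}{\pi}(d\theta_3 - \theta_2 d\theta_1)$ after identifying the remaining angular coordinate $\theta_3$ with the orbit parameter of the $\RR$-action (period $\sqrt{\epsilon}$) and $\theta_1 = \arg(x+\i y)$. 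Finally I would verify the $\ZZ_2$-quotients match: the semi-flat $\ZZ_2$-action $(u,v)\mapsto(-u,-v)$ corresponds under the moment map to $(x,y,\theta_2,\theta_3)\mapsto (x,y,-\theta_2,-\theta_3)$ together with $\arg(x+\i y)\mapsto \arg(x+\i y)+\pi$ (since $u^{-1}\mapsto -u^{-1}$), which is exactly the action $\iota$ in \eqref{nilaction}.

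The remaining bookkeeping is to promote these leading-order matches into the full order-$\mathfrak{n}$ asymptotic estimates of Definition~\ref{d:ALGstar}: one expands $k(u^2) = k(0)(1 + O(u^2))$, inverts the relation between $u$ and $(x,y)$ (equivalently $r$) as an asymptotic series, and checks that every discrepancy between the pulled-back semi-flat tensors and the model tensors is a smooth function of the Gibbons-Hawking coordinates decaying, together with all derivatives, at rate $\fs^{-\mathfrak{n}}$ for some $\mathfrak{n} > 0$ (in fact the polynomial corrections from $k$ give power decay, while the only subtlety is the $\log$'s — but these enter only through $V$, and the difference $\frac{\nu}{\pi}(|\log|u|| - \log r)$ is $O(1)$, hence contributes to $\kappa_0$ rather than to an error term).

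The main obstacle I anticipate is \emph{not} any single computation but the careful handling of the coordinate change $u \leftrightarrow (x,y)$: because $H_2 + \i H_3 \sim \mathrm{const}\cdot u^{-1}$ only up to a holomorphic function of $u^2$ and an additive constant, the inverse map $u = u(x+\i y)$ is itself only asymptotically $\sim \mathrm{const}/(x+\i y)$, and one must track how this propagates through the logarithmic potential $V$ and through the angular coordinates to confirm that the errors are genuinely $O(\fs^{-\mathfrak{n}})$ in the $g^{\fM}$-norm with all covariant derivatives, rather than merely $o(1)$. In particular one should check the constant of integration in $\int u^{-2}k(u^2)\,du$ can be chosen (and the branch of $\log$ fixed) so that no spurious bounded-but-non-decaying term survives; this is where the freedom in choosing $\kappa_0$ and $L$, and possibly a further bounded diffeomorphism of $\fM_{2\nu}(R)$, gets used.
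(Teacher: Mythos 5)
Your proposal is correct and follows essentially the same route as the paper: both use the tri-Hamiltonian $\RR$-action and its moment map to exhibit the semi-flat structure as a Gibbons--Hawking ansatz with potential $V=g(Y,Y)^{-1}=\frac{\nu}{\pi}|\log|u||$, identify the Gibbons--Hawking base coordinates with $(H_1,H_2+\i H_3)$ so that $r\sim |u|^{-1}$ and $\log r$ matches $|\log|u||$ up to the constant absorbed into $\kappa_0$, and then match the connection form and the $\ZZ_2$-quotient with the model. The paper streamlines the bookkeeping you worry about by first reducing to constant $k$ (the higher-order terms of $k(u^2)$ contribute only polynomially decaying errors) and applying a hyperk\"ahler rotation to normalize $k=\i k_0$, after which the coordinate change is exact rather than asymptotic; otherwise the arguments coincide.
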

\begin{proof}
The function $k$ admits a power series expansion in $\xi=u^2$. 
It is clear that the higher order terms will yield polynomially decaying terms in the following computation, so we just need to consider the case that $k$  is a constant. Multiplying $\Omega'$ by $e^{\i \theta}$ corresponds to a hyperk\"ahler rotation of $J,K$, so  
without loss of generality we may assume that $k =  ik_0$, where $k_0 \in \RR_+$. 
In this case, the moment map is 
\begin{align}
\mu = \Big( \epsilon^{1/2} v_2,  \i \epsilon^{-1/2}k_0 u^{-1} \Big).
\end{align}
Letting $I$ denote the elliptic complex structure, we next compute 
\begin{align}
\begin{split}
g(Y, Y) &= \omega_{\SF,\epsilon}'(Y, I Y)
=\i \epsilon^{-1} \omega_{\SF,\epsilon}'\Big(  \frac{\p}{\p v} +  \frac{\p}{\p \bar{v}}   , 
  \frac{\p}{\p v} - \frac{\p}{\p \bar{v}}\Big)\\
& = - 2 \i \epsilon^{-1}\omega_{\SF,\epsilon}'\Big(  \frac{\p}{\p v}   ,  \frac{\p}{\p \bar{v}}\Big)
= -2 \i \epsilon^{-1} \frac{\i}{2} \frac{\pi \epsilon}{\nu |\log|u||} 
=  \frac{\pi}{\nu |\log|u||}.
\end{split}
\end{align}
Next, define
\begin{align}
\label{Veqn}
V &\equiv g(Y, Y)^{-1} = \frac{ \nu}{\pi} |\log|u||, \\
\alpha_0(Z) &\equiv g(Y, Z).
\end{align}
We compute that
\begin{align}
 K^*\alpha_0(Z)&=\alpha_0(K Z)=g(Y, K Z)=-\Ima \Omega' (Y, Z)=
\Ima \Big\{ d\Big(\epsilon^{-1/2}\int u^{-2} \i k_0 du \Big) \Big\}(Z),\\
J^*\alpha_0(Z)&=\alpha_0(J Z)=g(Y, J Z)=-\Rea \Omega' (Y, Z)= \Rea \Big\{ d\Big( \epsilon^{-1/2}\int u^{-2} \i k_0 du \Big) \Big\} (Z),\\
I^*\alpha_0(Z)&=\alpha_0(I Z)=g(Y, I Z)=-\omega_{\SF,\epsilon}'(Y, Z)= - \epsilon^{1/2} d v_2 (Z),
\end{align}
so we have
\begin{align}
\alpha_1 & \equiv \Rea \Big( \epsilon^{-1/2} k_0 u^{-2} du\Big) = K^*\alpha_0, \\
\alpha_2 & \equiv \Ima \Big( \epsilon^{-1/2} k_0 u^{-2} du \Big) = - J^*\alpha_0,\\
\alpha_3 & \equiv  - \epsilon^{1/2}  d v_2 = I^*\alpha_0.
\end{align}
Then $V^{1/2}\alpha_1$, $V^{1/2}\alpha_2$, $V^{1/2}\alpha_3$, $V^{1/2}\alpha_0$ form the dual basis of an orthonormal basis. Define 
\begin{align}
\Theta \equiv V\alpha_0.
\end{align}
Then the hyperk\"ahler metric is 
\begin{align}
g = V ( \alpha_1^2 + \alpha_2^2 + \alpha_3^2) + V^{-1} \Theta^2,
\end{align}
and the K\"ahler forms are
\begin{align}
\omega_I & = V \alpha_1 \wedge \alpha_2 + \alpha_3 \wedge \Theta,\\
\omega_J & = V \alpha_1 \wedge \alpha_3 - \alpha_2 \wedge \Theta,\\
\omega_K & = \alpha_1 \wedge \Theta + V \alpha_2 \wedge \alpha_3.
\end{align}
We compute that 
\begin{align}
\alpha_0(Z) = \omega_{\SF,\epsilon}'(Y, I Z)
= - \omega_{\SF,\epsilon}'(I Y, Z)
= - \i \epsilon^{-1/2} \omega_{\SF,\epsilon}'\Big( \frac{\p}{\p v} - \frac{\p}{\p \bar{v}} , Z\Big).
\end{align}
Using \eqref{heinsemi}, this yields 
\begin{align}
\alpha_0  = \frac{\pi \epsilon^{1/2}}{ \nu |\log|u||}
\Rea ( dv - \Gamma du).
\end{align}
Using \eqref{e:dv} and \eqref{Veqn}, we arrive at 
\begin{align}
\Theta = V \alpha_0 = \epsilon^{1/2} \Big( dv_1 + \frac{\nu}{\pi} \arg(u) dv_2 \Big).
\end{align}
We also compute that 
\begin{align}
\alpha_1^2 + \alpha_2^2 = \frac{|k_0|^2}{2 \epsilon} |u|^{-4} ( du \otimes d \bar{u}
+ d \bar{u} \otimes du),
\end{align}
so we have 
\begin{align}
g &=  V \Big(   \frac{|k_0|^2}{2 \epsilon} |u|^{-4} ( du \otimes d \bar{u}
+ d \bar{u} \otimes du) 
+ \epsilon dv_2^2 \Big) + V^{-1} \epsilon \Big( dv_1 + \frac{\nu}{\pi} \arg(u) dv_2 \Big)^2\\
& =   V \Big(   \frac{|k_0|^2}{2 \epsilon} |u|^{-4} ( du \otimes d \bar{u}
+ d \bar{u} \otimes du) 
+ \epsilon dv_2^2 \Big) + \frac{\nu^2}{\pi^2} V^{-1} \epsilon \Big(  \frac{\pi}{\nu} dv_1 + \arg(u) dv_2 \Big)^2.
\end{align}

We consider the metric 
\begin{align}
\begin{split}
\tilde{g} = 4 \pi^2 \epsilon^{-1} g
&= V \Big(  4 \pi^2 \frac{|k_0|^2}{2 \epsilon^2} |u|^{-4} ( du \otimes d \bar{u}
+ d \bar{u} \otimes du)+  4 \pi^2 dv_2^2 \Big) \\
& \qquad + \frac{\nu^2}{\pi^2} V^{-1} \Big( \frac{2 \pi^2}{\nu} d( v_1) +  \arg(u) d(2 \pi v_2) \Big)^2.
\end{split}
\end{align}
We next show that this is isometric to the model metric given in Definition~\ref{d:ALGstar}. 
Observe that 
\begin{align}
4 \pi^2 \frac{|k_0|^2}{2 \epsilon^2} |u|^{-4} ( du \otimes d \bar{u}
+ d \bar{u} \otimes du)
\end{align}
is the Euclidean metric on $\RR^2$ in the coordinates $\zeta = 2 \pi \i k_0 \epsilon^{-1} u^{-1} = x + \i y$, 
so we define coordinates 
\begin{align}
r &= |\zeta| = 2 \pi k_0 \epsilon^{-1} |u|^{-1}, \\
\theta_1 & = \arg(\zeta) = - \arg(u) + \pi,\\
\theta_2 & = - 2 \pi v_2, \\
\theta_3 & = \frac{2 \pi^2}{\nu} v_1.
\end{align}
Then for $r$ sufficiently large, 
\begin{align}
V =  \frac{ \nu}{\pi} |\log|u|| = 
\frac{ \nu}{\pi} \Big( \log r - \log ( 2 \pi k_0 ) + \log \epsilon \Big)
\equiv \frac{ \nu}{\pi} \log r  + \kappa_0,
\end{align}
and the rescaled metric $\tilde{g}$ is then 
\begin{align}
\tilde{g} = V (  dr^2 + r^2 d\theta_1^2 + d\theta_2^2)
+  \frac{\nu^2}{\pi^2} V^{-1}  
\Big( d \theta_3 + (\theta_1 - \pi) d \theta_2 \Big)^2.
\end{align}
The mapping
\begin{align}
\tilde{\theta}_3 = \theta_3 + \theta_1 \theta_2 - \pi \theta_2
\end{align}
defines a coordinate change $H$ satisfying 
\begin{align}
H^* \Theta = d \tilde{\theta}_3 - \theta_2 d \theta_1 \equiv \tilde{\Theta},
\end{align}
so we have
\begin{align}
H^* \tilde{g} &= H^* (4 \pi^2 \epsilon^{-1} g) = V (  dr^2 + r^2 d\theta_1^2 + d\theta_2^2)
+  \frac{\nu^2}{\pi^2} V^{-1} \tilde{\Theta}^2, \\
H^*\tilde{\omega}_I &= H^* (4 \pi^2 \epsilon^{-1} \omega_I) = V d x \wedge d y + d \t2 \wedge \tilde{\Theta}, \\
H^* \tilde{\omega}_J &= H^* (4 \pi^2 \epsilon^{-1} \omega_J) = V d x \wedge d \t2 - dy \wedge \tilde{\Theta}, \\
H^*\tilde{\omega}_K &= H^* (4 \pi^2 \epsilon^{-1} \omega_K) = d x \wedge \tilde{\Theta} + V d y \wedge d \t2,
\end{align}
which are exactly \eqref{metricexp}, \eqref{mkf1}, \eqref{mkf2}, and \eqref{mkf3}.  
\end{proof}
Note that if we reverse the above construction, we see that the leading terms of any ALG$^*$ gravitational instanton admit a semi-flat structure.

\begin{remark}
\label{r:parameters} 
It is easy to see from the above proof that the parameters $\epsilon, i k_0$ in the semi-flat metric are related to the paramters $\kappa_0, L$ in the Gibbons-Hawking construction by 
\begin{align}
4\pi L^2 = \epsilon, \quad \kappa_0 = \frac{\nu}{2\pi} \log \Big( \frac{\epsilon}{2 \pi k_0} \Big). 
\end{align} 
\end{remark}

\subsection{ALG model space}
\label{ss:ALGmodel}
In the ALG case, we have the following definition of the model space. 
 \begin{definition}[Standard ALG model]\label{d:ALG-model}
  Let $\beta\in(0,1]$, $\tau\in\mathbb{H}\equiv\{\tau\in\dC|\Ima\tau>0\}$ be the parameters in Table~\ref{ALGtable}, and  $L \in \RR_+$.
 Let $\cC_{\beta,\tau,L}$ be the  manifold obtained by identifying $(\mathscr{U},\mathscr{V})$ with $(e^{\i\cdot 2\pi  \beta}\mathscr{U},e^{- \i  \cdot 2\pi\beta}\mathscr{V})$ in the space \begin{equation}\{(\mathscr{U},\mathscr{V}) \ | \ \arg \mathscr{U}\in[0,2\pi\beta]\}\subset(\mathbb{C}\times \mathbb{C})/(\dZ\oplus \dZ),
\end{equation}
where $\dZ\oplus \dZ$ acts on $\mathbb{C}\times \mathbb{C}$ by
\begin{equation}
(m,n)\cdot (\mathscr{U},\mathscr{V})= \Big(\mathscr{U}, \mathscr{V}
 + (m+n\tau) \cdot L \Big),
\ (m,n)\in\dZ\oplus \dZ.
\end{equation}
Define 
\begin{equation}
\cC_{\beta,\tau,L}(R) \equiv \{|\mathscr{U}|>R\} \subset \cC_{\beta,\tau,L}.
\end{equation}
Then there is a flat hyperk\"ahler metric 
\begin{equation}
   g^{\cC}=\frac{1}{2}(d\mathscr{U}\otimes d\mathscr{\bar U} + d\mathscr{\bar U} \otimes d \mathscr{U}+d\mathscr{V}\otimes d\mathscr{\bar V} + d\mathscr{\bar V} \otimes d \mathscr{V})
  \end{equation}
 on $\cC_{\beta,\tau,L}(R)$ with K\"ahler form
\begin{equation}
   \omega_1^{\cC}=\frac{\i}{2}(d\mathscr{U}\wedge d\mathscr{\bar U}+d \mathscr{V}\wedge d\mathscr{\bar V}),
  \end{equation}
 and holomorphic $2$-form
\begin{equation}
  \Omega^{\cC}= \omega_2^{\cC} + \i \omega_3^{\cC} =d\mathscr{U}\wedge d \mathscr{V}.
 \end{equation}
Each flat space $(\cC_{\beta,\tau,L}(R), g^{\cC},\bm{\omega}^{\cC})$ given as the above  is called a {\it standard $\ALG$ model}.
\end{definition}

\begin{table}[h]
\caption{Invariants of ALG spaces.}
\label{ALGtable}
 \renewcommand\arraystretch{1.5}
\begin{tabular}{|c|c|c|c|c|c|c|c|c|} \hline
$\infty$ & $\I_0^*$ & $\II$ & $\II^*$ & $\III$ & $\III^*$ & $\IV$ & $\IV^*$\\\hline
$\beta\in(0,1]$ &  $\frac{1}{2}$  & $\frac{1}{6}$ & $\frac{5}{6}$ & $\frac{1}{4}$ & $\frac{3}{4}$ & $\frac{1}{3}$ & $\frac{2}{3}$\\ [5pt]\hline
  $\tau\in\mathbb{H}$ & Any & $e^{\i \cdot \frac{2\pi}{3}}$ & $e^{\i \cdot \frac{2\pi}{3}}$ & $\i$ & $\i$ & $e^{\i \cdot \frac{2\pi}{3}}$ & $e^{\i \cdot \frac{2\pi}{3}}$ \\\hline
\end{tabular}
\end{table}
\begin{remark}
\label{r:IJK}
We denote the complex structures acting on tangent vectors associated to 
$\omega_1^{\cC}, \omega_2^{\cC}, \omega_3^{\cC}$ by $I_{\cC}, J_{\cC}, K_{\cC}$, respectively.
Let $I_{\cC}^*, J_{\cC}^*, K_{\cC}^*$ be the dual mappings acting on 1-forms. Then we have
\begin{align}
I^*_{\cC} (d \mathscr{U}) &= \i d \mathscr{U}, &I^*_{\cC} (d \mathscr{V}) &= \i d \mathscr{V},\\
J^*_{\cC} (d \mathscr{U}) &= - d \mathscr{\bar V}, &J^*_{\cC} (d \mathscr{V}) &= d \mathscr{\bar U}, \\
K^*_{\cC} (d \mathscr{U}) &= - \i d \mathscr{\bar V}, &K^*_{\cC} (d \mathscr{V}) &= \i d \mathscr{\bar U}.
\end{align}
These will be used below in Section \ref{s:order}.
\end{remark}

\begin{remark} The model space has the following properties. Letting $r = |\mathscr{U}|$,  the cross-section $r = r_0$ is a \textit{flat} $3$-manifold.  
There is a holomorphic map $z_{\cC}:\cC_{\beta,\tau,L}(R)\to\mathbb{C}$ defined as $z_{\cC} = \mathscr{U}^{\frac{1}{\beta}}$, with torus fibers which have area $L^2 \cdot \Ima \tau$.  The infinite end of the model space compactifies 
complex analytically by adding a singular fiber of the specified type in the first row of Table \ref{ALGtable}.
\end{remark}

\section{Classification of  ALG$^*$ gravitational instantons}
\label{s:ALGstarclass}
In this section, we will prove Theorem \ref{t:ALGstar}. In Subsection \ref{ss:acALG}, we will study the compactification of ALG$^*$ gravitational instantons. Then in Subsection \ref{ss:coALG}, we will classify ALG$^*$ gravitational instantons.
\subsection{Compactification of ALG$^*$ gravitational instantons}
\label{ss:acALG}

\begin{proposition}
  \label{p:fircom}
Using the complex structure $I$, any $\ALG^*$ gravitational instanton $(X, g, I, J, K)$ of order $2$ must be biholomorphic to an elliptic surface $S$ minus an $\I_{\nu}^*$-fiber.
\end{proposition}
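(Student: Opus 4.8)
The plan is to show that $X$, equipped with the complex structure $I$, admits a proper holomorphic map to a disc which extends the fibration structure visible at infinity, and then to invoke Kodaira's classification of elliptic fibrations to compactify. The starting point is the semi-flat description of the asymptotic geometry established in Subsection~\ref{ss:semi-flat}: near infinity, $(X,I)$ looks like Kodaira's local model for an $\I_\nu^*$ fiber, which in particular comes with a holomorphic coordinate $\xi$ (equivalently the holomorphic function $\xi = u^2$ from Remark~\ref{r:periods}) whose fibers are the elliptic curves of the semi-flat structure. Concretely, from the moment-map computation in Proposition~\ref{p:semi-flat}, the function $H_2 + \i H_3$ together with the $S^1$-coordinate assembles into a holomorphic function on the end whose level sets are compact tori; after the coordinate change $H$ of Proposition~\ref{p:semi-flat} this matches the $\I_\nu^*$ Kodaira model.

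First I would produce a global holomorphic function, or at least a global proper holomorphic map to a punctured disc, out of this asymptotic data. The key analytic input is that $\Omega = \omega_2 + \i\omega_3$ is a holomorphic $2$-form on $(X,I)$ which, in the asymptotic coordinates, agrees to leading order with $\Omega' = g\,du\wedge dv$; writing $\Omega$ as $d\mathscr{W}\wedge(\text{fiber form})$ determines a holomorphic function $\mathscr{W}$ on the end, unique up to the lattice of periods, and one checks using the order-$\mathfrak{n}$ decay in Definition~\ref{d:ALGstar} that $\mathscr{W}$ is asymptotic to $\xi^{1/2}\cdot(\text{const})$, hence has compact torus fibers near infinity. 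The monodromy of these fibers is the matrix $A_\nu$ of \eqref{Istarmono}. Then one extends $\mathscr{W}$ to a proper holomorphic map $X \to \dD^*$ (or $\dD$ after adding the fiber): the complement of the end is compact, so $\mathscr{W}$ extends meromorphically/holomorphically over it by Hartogs-type or Remmert-type arguments once one knows the fibers stay compact, which follows from properness of the asymptotic model plus the completeness of $g$.

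Next, having a proper holomorphic map $f: X \to \dD^*$ with elliptic fibers and monodromy $A_\nu$, I would compactify. Over $\dD^*$ the fibration is, by the theory of elliptic surfaces (Kodaira, \cite{Kodaira1963}), isomorphic to the standard model with periods $\tau_{1,\model}, \tau_{2,\model}$ from \eqref{Istartau}; since the monodromy $A_\nu$ has infinite order of the parabolic-times-$(-\mathrm{Id})$ type characterizing $\I_\nu^*$, the fibration extends over the puncture by gluing in precisely a Kodaira fiber of type $\I_\nu^*$, yielding a compact complex surface $S$ with an elliptic fibration $\bar f: S \to \dD$ and $X \cong S \setminus D$ where $D = \bar f^{-1}(0)$ is of type $\I_\nu^*$. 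The constraint $\nu \leq 4$ I would defer (it is presumably pinned down later, e.g. by the rational-elliptic-surface structure and the fact that $\chi(S)$ is bounded), since the Proposition as stated only asserts the biholomorphism to "an elliptic surface $S$ minus an $\I_\nu^*$-fiber."

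The main obstacle is the first step: extracting a genuine global holomorphic fibration from the merely asymptotic semi-flat data. The asymptotic coordinate $\mathscr{W}$ is only defined on the end and only up to $O(\fs^{-\mathfrak{n}})$ errors and a period lattice, so one must argue that (i) the naive holomorphic function read off from $\Omega$ is globally single-valued after possibly passing to the correct coordinate, (ii) its fibers remain compact and connected as one moves inward, and (iii) no new singular fibers or base points are introduced — i.e. the fibration really does extend across the compact core rather than degenerating. This is exactly the kind of argument carried out by Chen--Chen in \cite{CCIII} in the ALG case and in \cite{CCII} for ALF/ALE, and I expect the proof here to follow that template: use the holomorphic $2$-form to build the fibration on the end, use a removable-singularity / compactness argument to push it across the core, and use Kodaira's classification to identify the fiber added at infinity.
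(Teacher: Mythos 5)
Your high-level outline (produce an elliptic fibration asymptotic to the Kodaira $\I_{\nu}^*$ model, identify the end with Kodaira's standard model over a punctured disc, glue in the singular fiber) matches the paper's, but the two steps you yourself flag as ``the main obstacle'' are exactly where the paper's proof lives, and your proposed substitutes do not work as stated. First, you cannot obtain the global fibration by reading a holomorphic function off $\Omega$ on the end and then extending inward by a Hartogs/Remmert argument: the semi-flat data only gives an \emph{approximately} holomorphic function on the end (the hyperk\"ahler structure agrees with the model only to order $O(\fs^{-\mathfrak{n}})$), and Hartogs-type extension across a compact set is not available on an abstract non-compact complex surface without a pseudoconvexity or Steinness hypothesis that is not known a priori here. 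The paper goes the other way around: it invokes \cite[Theorem~1.3]{CVZ2} to produce a \emph{globally defined} harmonic function $z$ on all of $X$ asymptotic to $(r e^{\i\theta_1})^2$, then shows that $\eta \equiv I^* dz - \i\, dz$ is a decaying harmonic $1$-form and hence vanishes by the Bochner formula and the maximum principle; so $z$ is holomorphic everywhere and no extension across the compact core is needed.

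Second, to put the end into Kodaira's normal form $(\Delta^* \times \CC)/(\ZZ \tau_1 \oplus \ZZ \tau_2)$ --- which is what allows one to glue in the $\I_{\nu}^*$ fiber --- one needs a holomorphic \emph{section} near infinity, not merely a proper elliptic fibration with monodromy $A_\nu$; without a section the fibration could a priori be a nontrivial torsor over the basic model. Your proposal never produces one. The paper devotes the bulk of its argument to this point: it takes the zero section $\sigma_0$ of the asymptotic semi-flat structure, which is only almost holomorphic with $\Omega \circ \bar\partial \sigma_0 = e(z,\bar z) = O(|z|^{\epsilon})$, and corrects it to a genuinely holomorphic section $\sigma_1$ by solving $\Delta_{\RR^2} E' = e$ with controlled growth and setting $s_1 = s_0 - E/f$. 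Both of these arguments (or equivalents) would have to be supplied to close the gaps in your proposal.
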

\begin{proof}
Since $(r e^{\i\t1})^2$ is invariant under $\iota$, 
by \cite[Theorem~1.3]{CVZ2}, there exists a harmonic function $z$ on $X$ such that $\Phi^* z$ is asymptotic to $(r e^{\i\t1})^2$ on $\fM_{2\nu}(R)$.
Since 
\begin{equation}
I_{\fM}^* d ( (r e^{\i\t1})^2 ) = \i d ( (r e^{\i\t1})^2 ),
\end{equation}
we have 
\begin{equation}
\eta \equiv I_{X}^* d z - \i d z = O(s^{-1 + \mu})
\end{equation}
for all $\mu>0$, and $\Delta_{X}\eta = 0$. 
By Bochner's formula and the maximum principle, $\eta = 0$ on $X$, which implies that $z$ is holomorphic on $X$.

Since $ d z - d ( (\Phi^{-1})^* (r e^{\i\t1})^2 ) = O(s^{-1 + \mu})$, the fiber at $z$ on $X$ for $|z|$ large is diffeomorphic to the fiber at $(r e^{\i\t1})^2$ on $\fM_{2\nu}(R)$, which is the same as the fiber at $r e^{\i\t1}$ on $\widehat{\fM}_{2\nu}(R)$ for $r$ large. This implies that the fibration $z: X \to \CC$ is elliptic.

  We next prove the existence of a holomorphic section $\sigma_1$ defined on a neighborhood of infinity using a similar argument as in \cite[Section~4.7]{CCI}. 
First, we define a $\overline{\p}$-operator acting on sections. 
Above we have proved there is a holomorphic function
$z: X \rightarrow \CC$.
Let $U =  \CC \setminus \overline{B_{R^2}(0)}$ and let $\sigma : U \rightarrow X$ be any smooth section, that is $z \circ \sigma = \Id_U$. The differential of $\sigma : U \rightarrow  X$ is
\begin{align}
  \sigma_* : TU \rightarrow T X,
\end{align}
where these are the real tangent bundles. If we complexify, then
\begin{align}
\sigma_* \otimes \CC : T^{1,0}(U) \oplus T^{0,1}(U)
 \rightarrow T^{1,0}(X) \oplus T^{0,1}(X).
\end{align}
Letting $\Pi : T_{\CC}X \rightarrow T^{1,0}(X)$ denote the projection mapping, 
we have a mapping
\begin{align}
  \Pi \circ ( \sigma_* \otimes \CC) :  T^{0,1}(U)
  \rightarrow  T^{1,0}(X).
\end{align}
Given any point $p \in U$, $z$ gives a local holomorphic coordinate on the base, and we choose a local affine coordinate $w$ on the fiber such that $w= 0$ is a locally defined holomorphic section and on each fiber, $w$ can be viewed as the coordinate on $\CC$ if we write the fiber as $\CC/\ZZ^2$.
Then locally we can write $\sigma : z \mapsto (z, s(z, \bar{z}))$
for a function $z : U \rightarrow \CC$, 
and we see that 
\begin{align}
  \Pi \circ (\sigma_* \otimes \CC) \Big( \frac{\p}{\p \bar{z}} \Big)
  = \frac{\p s}{\p \bar{z}}(z,\bar{z}) \frac{\p}{\p w} .
\end{align}
In other words, we have
\begin{align}
  \Pi \circ (\sigma_* \otimes \CC) :  T^{0,1}(U)  \rightarrow  T^{1,0}(F),
\end{align}
where $T^{1,0}(F)$ is the $(1,0)$ part of the vertical tangent bundle. 
This defines an operator from smooth sections over $U$ 
\begin{align}
  \overline{\p} : \Gamma(U) \rightarrow \Gamma(
  \Lambda^{0,1}(U) \otimes \mathbb{L}^{-1})
\end{align}
such that $\overline{\p} \sigma = 0$ if and only if $\sigma$ is a holomorphic section,
where $\mathbb{L}^{-1} \equiv  \sigma^* T^{1,0}(F)$.

In coordinates, since each fiber is a compact Riemann surface,  
we have that the holomorphic $2$-form is
$\Omega = f(z) d z \wedge d w$,
 where $f: U \rightarrow \CC$ is non-vanishing. 
By plugging in the $T^{1,0}(F)$ component to $\Omega$, we can define
\begin{align}
  \Omega \circ \overline{\p} : \Gamma(U) \rightarrow C^{\infty}(U, \CC),
\end{align}
since $\Lambda^{1,0}(U) \otimes \Lambda^{0,1}(U) \cong \Lambda^{1,1}(U)$ is a trivial bundle. In coordinates, this mapping is given by
\begin{align}
  \Omega \circ \overline{\p} \sigma = f(z)  \frac{\p s}{\p \bar{z}}(z,\bar{z}).
\end{align}

Next, recall that on the model space,  the zero section of the semi-flat structure is holomorphic. Since $z$ is asymptotic to $(\Phi^{-1})^*(r e^{\i\t1})^2$, we see that the zero section intersects each far-enough fiber of $z$ at exactly one point. This defines an ``almost holomorphic'' section $\sigma_0 : U \rightarrow X$ near infinity.  Locally we can write $\sigma_0 (z) = (z, s_0(z,\bar{z}))$, and we define 
\begin{align}
e(z, \bar{z}) =  f(z)  \frac{\p s_0}{\p \bar{z}}(z,\bar{z}).
\end{align}
Since $U$ is biholomorphic to a punctured disc $\Delta^*$ and 
$H^{1}(\Delta^*, \mathcal{O}) = 0$,  
there exists $E:  U \rightarrow \CC$ solving
\begin{align}
 \frac{\p}{\p \bar{z}} E ( z , \bar{z}) = e(z, \bar{z}).
\end{align}
We then define 
\begin{align}
s_1(z, \bar z) = s_0(z, \bar z) - E(z ,\bar z)/f(z),
\end{align}
and use this to define a section $\sigma_1$ over $U$. We then have
\begin{align}
\begin{split}
\frac{\partial}{\partial \bar z} ( s_0 (z, \bar z) - E(z,\bar z)/f(z) ) 
&= \frac{\partial}{\partial \bar z} ( s_0 (z, \bar z))
- \frac{\partial}{\partial \bar z} (  E(z,\bar z))/f(z)) \\
&= \frac{\partial}{\partial \bar z} ( s_0 (z, \bar z))
- (1/f(z)) f(z)  \frac{\partial}{\partial \bar z} ( s_0 (z, \bar z)) = 0.
\end{split}
\end{align}
Consequently, $\sigma_1$ is a holomorphic section over $U$.  

We then use the following fact from \cite{Kodaira1963}: any elliptic surface with a section defined over a punctured disc $\Delta^*$ is biholomorphic to  
$(\Delta^* \times \CC) / (\ZZ \tau_1 \oplus \ZZ \tau_2)$, where $\tau_1, \tau_2$ are the periods. Recalling Remark~\ref{r:periods}, there exists a local coordinate $u$ on the base such that 
the periods of the model space are given by $\tau_{1, \model}(u), \tau_{2,\model}(u)$. 
On the ALG$^*$ manifold, $\tau_1$ and $\tau_2$ are asymptotic to the model periods in ALG$^*$ coordinates. Then by a holomorphic coordinate transformation of the base, we can find coordinates $\tilde{u}$ asymptotic to $z^{-1}$ so that $\tau_1(\tilde{u}) = \tau_{1,\model}(\tilde{u})$ and $\tau_2(\tilde{u}) = \tau_{2,\model}(\tilde{u})$. 
This shows that the ALG$^*$ manifold is biholomorphic to the model space near infinity. Finally, the argument of Case (v), Sub-case (2) in \cite[Section~8]{Kodaira1963} shows that we can add a central fiber of type $\I_{\nu}^*$ to compactify and obtain $S$.  
\end{proof}

\begin{proposition}
Using the complex structure $I$, any $\ALG^*$ gravitational instanton $(X, g, I, J, K)$ of order $2$ is biholomorphic to a rational elliptic surface $S$ with global section, minus an $\I_{\nu}^*$ fiber $D$, with $\nu \in\{1, 2, 3, 4\}$.
\label{t:Compactification}
\end{proposition}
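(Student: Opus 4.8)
The plan is to combine Proposition~\ref{p:fircom} with the structure of the holomorphic volume form $\Omega = \omega_2 + \i \omega_3$ and the classification theory of elliptic surfaces. By Proposition~\ref{p:fircom}, $(X,I)$ is biholomorphic to $S \setminus D$, where $f : S \to \PP^1$ is an elliptic surface and $D = f^{-1}(\infty)$ is a fiber of Kodaira type $\I_\nu^*$; the task is to show $S$ is a rational elliptic surface with a global section and $\nu \le 4$.

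The first step is to control $\Omega$ near $D$. On the model space $\Omega$ is asymptotic to the meromorphic form $\Omega' = u^{-2}k(u^2)\, du\wedge dv$, and a direct computation in the coordinates $(\xi,w) = (u^2,uv)$ of Remark~\ref{r:periods} shows that $\Omega'$ equals $\tfrac{1}{2}\xi^{-2}k(\xi)\, d\xi\wedge dw$ with $k(0)\neq 0$, in particular a meromorphic form with poles only along $D$. Feeding this into the $\ALG^*$ decay estimates of Definition~\ref{d:ALGstar} (recall the order satisfies $\mathfrak{n}\geq 2$) shows that $\Omega$ itself extends to a meromorphic $2$-form on $S$ with poles only along $D$. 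Since $\Omega$ is holomorphic and nowhere vanishing on $X$, this gives $\mbox{div}(\Omega) = -D$, hence $K_S \cong \cO_S(-D)$; and because $D$ is a fiber, $\cO_S(D)\cong f^*\cO_{\PP^1}(1)$, so $K_S \cong f^*\cO_{\PP^1}(-1)$. In particular $S$ is relatively minimal: any $(-1)$-curve contained in a fiber is numerically disjoint from $D$, so it has zero intersection with $K_S = \cO_S(-D)$, contradicting the adjunction formula.

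Next I would extend the section and identify $S$. The holomorphic section $\sigma_1 : \CC\setminus\overline{B_{R^2}(0)}\to S$ produced in the proof of Proposition~\ref{p:fircom} is a rational section of $f$; since a rational map from the smooth projective curve $\PP^1$ into the projective surface $S$ is automatically a morphism, $\sigma_1$ extends to a global section $\PP^1\to S$, and in particular $f$ has no multiple fibers. The canonical bundle formula for a relatively minimal elliptic surface without multiple fibers then gives $K_S = f^*\cO_{\PP^1}(\chi(\cO_S)-2)$; comparing with $K_S = f^*\cO_{\PP^1}(-1)$ forces $\chi(\cO_S)=1$, which means $S$ is a rational elliptic surface (equivalently $p_g = q = 0$ and $\kappa(S)=-\infty$). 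Noether's formula then also records $\sum_v e(F_v) = 12\chi(\cO_S) = 12$.

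Finally, for the bound: a rational elliptic surface has $b_2(S) = \rho(S) = 10$, so the Shioda--Tate formula $\rho(S) = 2 + \sum_v (m_v - 1) + \rank \mathrm{MW}(f)$ forces $2 + (m_\infty - 1)\leq 10$, where $m_\infty = \nu + 5$ is the number of irreducible components of the $\I_\nu^*$ fiber $D$; hence $\nu \leq 4$, and since $\nu \in \ZZ_+$ by hypothesis, $\nu\in\{1,2,3,4\}$. (One could instead argue from $\sum_v e(F_v) = 12$: as $e(\I_\nu^*) = \nu + 6$ this only yields $\nu\leq 6$, and eliminating $\nu = 5,6$ then needs the root lattice $D_{\nu+4}$ of $D$ to embed into $E_8$, which again requires $\nu + 4\leq 8$.) I expect the main obstacle to be the very first step --- making rigorous that the $\ALG^*$ asymptotics force $\Omega$ to extend meromorphically across $D$ with divisor exactly $-D$; once this comparison with the model form is in place, the remaining steps are routine applications of standard elliptic surface theory.
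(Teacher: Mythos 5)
The overall architecture (extend $\Omega$ meromorphically to get $\operatorname{div}(\Omega)=-D$, deduce $K_S=f^*\cO_{\PP^1}(-1)$ and relative minimality, then bound $\nu$ by Shioda--Tate or by embedding $D_{\nu+4}\hookrightarrow E_8$) is reasonable, and your Shioda--Tate count is a legitimate substitute for the paper's appeal to Miranda--Persson. But there is a genuine gap at the step where you extend $\sigma_1$ to a global section. The section $\sigma_1$ produced in the proof of Proposition~\ref{p:fircom} is a \emph{holomorphic} section defined only over the analytic open set $\{|z|>R^2\}$; it is not a rational map from $\PP^1$ to $S$, so the fact that rational maps from smooth projective curves to projective varieties are morphisms does not apply. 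There is no a priori reason for a section defined on an analytic neighborhood of $\infty$ to be the restriction of an algebraic (rational) section: its closure in $S$ is not an analytic subvariety (it has real $3$-dimensional boundary over $\{|z|=R^2\}$), and elliptic surfaces without global sections (e.g.\ suitable elliptic K3's) still admit local holomorphic sections near every fiber. Since your proofs of ``no multiple fibers,'' of $\chi(\cO_S)=1$ via the canonical bundle formula, and hence of rationality all rest on this global section, the gap is load-bearing and the argument as written is circular: you need to know $S$ is rational \emph{before} you can conclude a section exists.

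For contrast, the paper avoids this by establishing rationality first: it shows $b_1(X)=0$ (from \cite[Corollary~1.6]{CVZ2}) and $b_1(S)=0$ by Mayer--Vietoris, combines $-K_S=[D]$ with Kodaira's classification and Castelnuovo's criterion to conclude $S\cong\PP^2$ blown up at $9$ points, then uses an exceptional curve $E$ with $E\cdot D=1$ to rule out multiple fibers, and only then invokes the standard theorem that a rational elliptic surface without multiple fibers admits a global section. Your route could be repaired in the same spirit, or alternatively by using the canonical bundle formula \emph{with} multiple-fiber terms, $K_S\equiv(\chi(\cO_S)-2)F+\sum(m_i-1)F_i$: combined with $K_S\equiv -F$, $\chi(\cO_S)\geq 0$, and $12\chi(\cO_S)=e(S)\geq e(\I_\nu^*)>0$, this forces $\chi(\cO_S)=1$ and no multiple fibers without presupposing a section, after which $p_g=q=0$ gives rationality and the section comes from the cited theorem rather than from analytic continuation of $\sigma_1$. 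The first step you flag as the main obstacle (meromorphic extension of $\Omega$ with divisor exactly $-D$) is treated only briefly in the paper as well and is not where the real difficulty lies.
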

\begin{proof} We first show that the elliptic surface constructed in Proposition \ref{p:fircom} is a rational elliptic surface. To see this, it follows from \cite[Corollary~1.6]{CVZ2} that the first betti number $b_1(X)=0$. 
Since $D$ is a configuration of $2$-spheres corresponding to an extended Dynkin diagram of dihedral type, we have $b_1(D) = 0$.  Let $N$ be an open neighbohood of $D$ which deformation retracts onto $D$ and such that $X \cap N$ is connected. The following portion of the reduced Mayer-Vietoris sequence 
\begin{equation}
\begin{tikzcd}
 0 \arrow[r] &  H^{1}(S; \RR) \arrow[r] &  H^{1}(X;\RR)   \oplus H^{1}(N;\RR) 
\end{tikzcd}
\end{equation}
implies that $b_1(S) = 0$. Next, it is straightforward to see that $\Omega = \omega_2 + \i \omega_3$ defines a meromorphic $2$-form on $S$ with a pole of order $1$ along the divisor $D$ at infinity. Consequently, $\mbox{div}(\Omega) = -D$, which implies that $-K = [D]$. Similar to the proof of \cite[Theorem~3.3]{CCIII}, by Kodaira’s classification of complex surfaces and the Castelnuovo theorem, $S$ must be biholomorphic to $\mathbb{P}^2$ blown up at 9 points, so there exists an exceptional curve $E$ satisfying $E^2=-1$. By the adjunction formula, $E \cdot K=-1$, so $E \cdot D=1$. Let $D'$ be any singular fiber in the interior of $X$. Then $D'$ is homologous to $D$ in $S$. If $D'$ had multiplicity, say $D'=mC$, then $E \cdot C=1/m$ which implies that $m=1$, so that there are no multiple fibers. This implies that there exists a global holomorphic section of the elliptic fibration; see for example \cite{HarbourneLang}.
Finally, by the classification of singular fibers on rational elliptic surfaces due to Miranda-Persson, $\nu\in\{1,2,3,4\}$; see \cite{Miranda1990}. 
\end{proof}
\subsection{Classification of ALG$^*$ gravitational instantons}
\label{ss:coALG}
In this subsection, we complete the proof of Theorem~\ref{t:ALGstar}. 
Recall that we have proved in Proposition \ref{p:fircom} that the holomorphic function $z$ provides an elliptic fibration with associated holomorphic section $\sigma_1$ over $\{R^2<|z|<\infty\}$ which provides a compactification. 
Choose an arbitrary K\"ahler form $\omega_{S}$ on the compactification such that the area of each fiber is the same as $\omega_{\ALG^*}$. Denote this area by $\epsilon$. 
\begin{proposition}
\label{p:sphi}
There exist a holomorphic section $\sigma_2$ and a smooth real-valued function $\varphi_1$, both defined over $\{\infty>|z|>R^2\} \subset X$, such that 
\begin{equation}
\omega_{S}=\omega_{\SF, \epsilon}[\sigma_2]+ \i \partial\bar\partial\varphi_{1}
\end{equation}
on $\{\infty>|z|>R^2\}$, where $\omega_{\SF, \epsilon}[\sigma_2]$ is Greene-Shapere-Vafa-Yau's semi-flat metric \cite{GSVY} with $\sigma_2$ as the zero section, $\Omega$ as the holomorphic 2-form, and area of each fiber given by  $\epsilon$.
\end{proposition}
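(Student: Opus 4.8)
The plan is to compare $\omega_S$ with the semi-flat metric $\omega_{\SF,\epsilon}[\sigma_1]$ associated to the holomorphic section $\sigma_1$ constructed above, and to absorb their difference into a change of zero section together with a $\partial\bar\partial$-exact term. Set $\mathcal{A}\equiv\{R^2<|z|<\infty\}\subset X$, let $\pi\equiv z|_{\mathcal{A}}:\mathcal{A}\to B$ be the elliptic fibration over the annulus $B=\{|z|>R^2\}\subset\CC$, and put $\eta\equiv\omega_S-\omega_{\SF,\epsilon}[\sigma_1]$. Both forms are Kähler for the elliptic complex structure $I$ and have fiber area $\epsilon$, so $\eta$ is a smooth closed real $(1,1)$-form on $\mathcal{A}$ with $\int_{E_z}\eta=0$ for every fiber $E_z$. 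Since $\mathcal{A}$ deformation retracts onto its compact cross-section, which fibers over $S^1$ with fiber $E_z$ and monodromy $A_\nu$, and since $A_\nu-\Id$ is invertible on $H^1(E_z;\RR)$ while $A_\nu$ acts trivially on $H^2(E_z;\RR)$, the Wang sequence gives $H^2_{dR}(\mathcal{A};\RR)\cong\RR$ via $[\alpha]\mapsto\int_{E_z}\alpha$. Hence $[\eta]=0$, so $\eta=dA$ for a smooth real $1$-form $A$. Writing $A=A^{1,0}+A^{0,1}$ with $A^{0,1}=\overline{A^{1,0}}$, the vanishing of the $(2,0)$ and $(0,2)$ parts of $dA=\eta$ forces $\partial A^{1,0}=0=\bar\partial A^{0,1}$, so $A^{0,1}$ represents a class $[A^{0,1}]\in H^{0,1}_{\bar\partial}(\mathcal{A})$, well defined modulo the one-dimensional image of $H^1_{dR}(\mathcal{A};\RR)$.

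The crux is to kill $[A^{0,1}]$ by choosing the zero section $\sigma_2$ appropriately. The base $B$ is an open Riemann surface, hence Stein with trivial Picard group, and all fibers of $\pi$ are smooth; so $\pi_*\mathcal{O}_{\mathcal{A}}=\mathcal{O}_B$, the sheaf $R^1\pi_*\mathcal{O}_{\mathcal{A}}$ is a (trivial) line bundle on $B$, and the Leray spectral sequence yields $H^{0,1}_{\bar\partial}(\mathcal{A})=H^1(\mathcal{A},\mathcal{O})\cong H^0(B,R^1\pi_*\mathcal{O}_{\mathcal{A}})$, the space of sections of the Hodge bundle of the $\I_\nu^*$ fibration. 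A holomorphic section $\sigma_2$ of $\pi$ is obtained from $\sigma_1$ by a fiberwise translation $v\mapsto v-h$, where, in the coordinates of Remark~\ref{r:periods}, the translation $h$ is a holomorphic function on the double cover which is equivariant for the monodromy $A_\nu$; these $h$ are exactly the local Mordell--Weil data near the $\I_\nu^*$ fiber. Using the explicit ansatz \eqref{heinsemi}, one computes $\omega_{\SF,\epsilon}[\sigma_1]-\omega_{\SF,\epsilon}[\sigma_2]=dB_h$ for an explicit real $1$-form $B_h$ whose $(0,1)$-part depends $\CC$-linearly on $h$, and matching the map $h\mapsto[B_h^{0,1}]$ with the identification above shows that these classes exhaust $H^{0,1}_{\bar\partial}(\mathcal{A})$ modulo the image of $H^1_{dR}(\mathcal{A};\RR)$. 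Hence $h$, and thus $\sigma_2$, can be chosen so that, after replacing $A$ by a primitive of $\omega_S-\omega_{\SF,\epsilon}[\sigma_2]$, one has $[A^{0,1}]=0$ in $H^{0,1}_{\bar\partial}(\mathcal{A})$. Establishing this surjectivity — describing precisely the holomorphic sections near a Kodaira fiber of type $\I_\nu^*$ together with the dependence of the semi-flat ansatz on the section — is where essentially all the work lies; the rest is formal.

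Finally, once $[A^{0,1}]=0$ we may solve $\bar\partial\chi=A^{0,1}$ for a smooth $\CC$-valued function $\chi$ on $\mathcal{A}$. Then $A^{1,0}=\overline{A^{0,1}}=\partial\bar\chi$, and since $\partial A^{1,0}=\bar\partial A^{0,1}=0$,
\[
\omega_S-\omega_{\SF,\epsilon}[\sigma_2]=dA=\bar\partial A^{1,0}+\partial A^{0,1}=\bar\partial\partial\bar\chi+\partial\bar\partial\chi=\partial\bar\partial(\chi-\bar\chi)=\i\,\partial\bar\partial\big(2\,\Ima\chi\big).
\]
Setting $\varphi_1\equiv 2\,\Ima\chi$, a smooth real-valued function on $\{R^2<|z|<\infty\}$, gives $\omega_S=\omega_{\SF,\epsilon}[\sigma_2]+\i\,\partial\bar\partial\varphi_1$ there, as asserted. (For the later steps in the proof of Theorem~\ref{t:ALGstar} one additionally wants control of the growth of $\varphi_1$ at infinity; this follows from the $\ALG^*$ asymptotics of $\omega_S-\omega_{\SF,\epsilon}[\sigma_1]$ by a weighted $\bar\partial$-estimate, once the obstruction class has been removed.)
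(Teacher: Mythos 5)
The paper disposes of this proposition in one line, by citing Claim~1 on p.~384 of Hein's paper, which in turn rests on \cite[Lemma~4.3]{GW}; your argument is essentially a reconstruction of the skeleton of that cited proof rather than a different route. The formal parts of your outline are correct: the Wang sequence computation (using that $A_\nu-\Id$ is invertible over $\RR$) does give $H^2_{dR}(\mathcal{A};\RR)\cong\RR$ detected by fiber integration, so $\eta=dA$; the type decomposition forces $\partial A^{1,0}=\bar\partial A^{0,1}=0$; and once $[A^{0,1}]=0$ the identity $dA=\i\partial\bar\partial(2\Ima\chi)$ is verified correctly. The Leray identification $H^1(\mathcal{A},\mathcal{O})\cong H^0(B,R^1\pi_*\mathcal{O})$ over the Stein base is also fine. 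However, the step you yourself flag --- that fiberwise translation of the zero section by local holomorphic sections changes the semi-flat form by $dB_h$ with $h\mapsto[B_h^{0,1}]$ realizing every class in $H^{0,1}_{\bar\partial}(\mathcal{A})$ modulo the image of $H^1_{dR}(\mathcal{A};\RR)$ --- is precisely the substantive content of \cite[Lemma~4.3]{GW}: it requires the explicit computation of $\omega_{\SF,\epsilon}[\sigma_1]-\omega_{\SF,\epsilon}[\sigma_2]$ from the ansatz \eqref{heinsemi} and the identification of local holomorphic sections near an $\I_\nu^*$ fiber via the exponential sequence $0\to R^1\pi_*\ZZ\to R^1\pi_*\mathcal{O}\to\mathcal{E}\to 0$ (where invertibility of $A_\nu-\Id$ over $\QQ$ kills $H^0(R^1\pi_*\ZZ)$). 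As written, this is asserted rather than proved, so the proposal is an accurate roadmap to the cited proof but not an independent verification of it; to close the gap you would either carry out that computation or, as the paper does, invoke \cite{GW} and \cite{Hein} directly.
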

\begin{proof}
This is proved in Claim~1 on page 382 of \cite{Hein}, which is based on \cite[Lemma~4.3]{GW}.
\end{proof}
Similarly, there exists a holomorphic section $\sigma_3$ and a smooth function $\varphi_2$, both defined over $\{\infty>|z|>R^2\}$, such that 
\begin{equation}
\omega_{\ALG^*}=\omega_{\SF,\epsilon}[\sigma_3]+ \i \partial\bar\partial\varphi_{2}
\end{equation}
on $\{\infty>|z|>R^2\}$. Let $T$ be the translation by adding $\sigma_2-\sigma_3$ on each fiber, which satisfies $T^*\Omega = \Omega$ because $\sigma_2$ and $\sigma_3$ are both holomorphic. Then 
\begin{equation}\omega_{\ALG^*}=T^*(\omega_{S}-\i\partial\bar\partial\varphi_{1})+\i\partial\bar\partial\varphi_{2}.\end{equation}
If we use the local holomorphic section $\sigma_1-\sigma_2+\sigma_3$ to compactify $X$ into a rational elliptic surface $S'$, then 
\begin{equation}
T^*(\omega_{S})=\omega_{\ALG^*}+\i\partial\bar\partial\varphi_{3}
\end{equation}
can be extended to a smooth K\"ahler form on $\{\infty>|z|>R^2\}\subset S'$, where 
\begin{equation}
\varphi_{3}=T^*\varphi_{1}-\varphi_{2}
\end{equation}
is a smooth function.
To obtain a K\"ahler form on $S'$, we choose a cut-off function $\chi$ on $\RR$ which is 1 on $(2,\infty)$ and is 0 on $(-\infty,1)$. We also choose a smooth non-negative function $f$ on $\CC$ which is 1 on $R^2 \le |z| \leq 4R^2$ and is 0 on $|z|<R^2/4$ and $|z|>16R^2$. We can find a smooth real-valued function $\psi$ on $\CC$ such that 
\begin{equation}
\i \partial\bar\partial \psi = \i f  dz\wedge d\bar z.
\end{equation} 
Then
\begin{equation}
\omega=\omega_{\ALG^*} + \i \partial\bar\partial \Big( \chi \Big(\frac{\sqrt{|z|}}{R}\Big)\varphi_{3}+ C  \cdot \psi(z) \Big)
\end{equation}
will be the required smooth K\"ahler form on $S'$ for $C$ sufficiently large. This finishes the proof of Part (1). 

For Part (2), let $z : S \rightarrow \PP^1$ be the elliptic projection such that $D$ lies over $[0,1]$. Then we can view $z: X \rightarrow \CC$, where $X = S \setminus D$.  
By Remark \ref{r:parameters}, there exists $c > 0$ and $L > 0$ such that for any holomorphic section $\sigma$ defined over $\{R^2<|z|<\infty\}$, the Greene-Shapere-Vafa-Yau's semi-flat metric $\omega_{\SF, \epsilon}[\sigma]$ \cite{GSVY} using $\sigma$ as the zero section, $c \cdot \Omega$ as the holomorphic 2-form,  and using the same area $\epsilon$ on each fiber as $\omega$, has an ALG$^*$ end with parameters $\nu$, $\kappa_0$ and $L$. The function $\Phi^* z$ is asymptotic to $c' (r e^{\i \t1})^2$, where $c' \in \CC \setminus \{0\}$. 
Similar to Proposition \ref{p:sphi},
there exists a holomorphic section $\sigma$ 
and a smooth function $\varphi_4$, both defined over $\{R^2<|z|<\infty\}$, such that 
\begin{equation}
\omega=\omega_{\SF, \epsilon}[\sigma]+\i \partial\bar\partial\varphi_{4}.
\end{equation}
Then we use the following modification as in \cite[Theorem~4.3]{CCIII} instead of the original construction in \cite{Hein}. 
Choose $t$ sufficiently large so that 
\begin{equation}
\omega_{t}\equiv \omega - \i\partial\bar\partial \Big( \chi \Big(\frac{\sqrt{|z|}}{R}\Big) \varphi_{4} - t \cdot \psi (z) \Big)
\end{equation}
is positive, and the integral
\begin{equation}
\int_{X} \Big(\omega_{t}^2-\frac{c^2}{2}\Omega\wedge\bar\Omega\Big)
\end{equation}
is positive. The $d z \wedge d \bar z$ component of $\omega_t$ is asymptotic to 
\begin{equation}
\frac{\i}{8 |c'|}\Big(\kappa_0  + \frac{\nu}{2\pi} \log \Big|\frac{z}{c'}\Big|\Big) |z|^{-1} d z\wedge d \bar z.
\end{equation}
If we fix a large enough $R$, then
\begin{equation}
\omega_{t, t'} \equiv \omega_{t}-\chi\Big(\frac{\sqrt{|z|}}{R}-100\Big)\Big(1-\chi\Big(\frac{\sqrt{|z|}}{R}-t'\Big)\Big) \frac{\i}{10000 |c'|}\Big(\kappa_0 + \frac{\nu}{2\pi} \log \Big|\frac{z}{c'}\Big| \Big) |z|^{-1} dz\wedge d\bar z
\end{equation}
is still positive, but
\begin{equation}
\int_{X} \Big(\omega_{t, t'}^2-\frac{c^2}{2}\Omega\wedge\bar\Omega\Big) \to -\infty
\end{equation}
as $t'\to \infty$.
Since this integral is positive for $t'$ sufficiently small, we see that there exists a value $t'$ such that the integral becomes 0. For this value of $t'$, there exists a smooth bounded function $\varphi_5 : X \rightarrow \RR$  such that 
\begin{equation}
\omega_{\ALG^*} \equiv \omega_{t, t'}+\i\partial\bar\partial\varphi_5
\end{equation}
is hyperk\"ahler; see \cite[Theorem~1.1]{TianYau}. By Hein's estimates for higher order derivatives of $\varphi_5$ \cite[Proposition~2.9]{Hein}, $\omega_{\ALG^*}$ is asymptotic to $\omega_{t, t'}$, which is the same as $\omega_{\SF, \epsilon}[\sigma]$ near infinity. By Subsection \ref{ss:semi-flat},  the hyperk\"ahler structure is $\ALG^*$ according to 
Definition \ref{d:ALGstar}. Finally, the order can be improved to $2$ by \cite[Theorem~1.10]{CVZ2}.

\section{Topology and coordinates at infinity}
\label{s:Topology of gravitational instantons}
In this section, we will determine the topology of ALG and ALG$^*$ gravitational instantons, and also address the issue of existence of ``uniform'' coordinates at infinity. 

\subsection{Topology of gravitational instantons}

\label{ss:Topology of gravitational instantons}
In this subsection we will prove that any two gravitational instantons 
of type $\ALG$ or $\ALG^*$ with the same $\beta$ or $\nu$ are diffeomorphic, by a diffeomorphism with nice properties. Recall that a rational elliptic surface admits a section if and only if there are no multiple fibers; see for example \cite{HarbourneLang}.
\begin{theorem} 
\label{t:elldiff}
Let $z: S \rightarrow \PP^1$ and $z': S' \rightarrow \PP^1$ 
be rational elliptic surfaces without multiple fibers. Let $\Sigma, \Sigma'$ be sections of $z, z'$, 
and assume that $D, D'$ are singular fibers of the same Kodaira type ($\I_0^*$, $\II^*$, $\II$, $\III^*$, $\III$, $\IV^*$, $\IV$, $\I_1^*$, $\I_2^*$, $\I_3^*$, or $\I_4^*$) in $S, S'$, respectively, both over the point $p_{\infty} \equiv [0,1] \in \PP^1$. Then there exists a diffeomorphism 
\begin{align}
\Psi: S \setminus D \rightarrow S' \setminus D'
\end{align}
such that near infinity, $z = z' \circ \Psi$, the section $\Sigma$ is mapped to $\Sigma'$, and any point $w_1 \tau_1 + w_2 \tau_2$ on the fiber is mapped to $w_1 \tau'_1 +  w_2 \tau'_2$ on the fiber, where $\Sigma$ and $\Sigma'$ are viewed as the zero section $w_1=w_2=0$, $\tau_1, \tau_2$ are the periods of the fiber on $S$, and $\tau'_1, \tau'_2$ are the periods of the fiber on $S'$.
\label{t:Topology of gravitational instantons}
\end{theorem}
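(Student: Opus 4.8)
The goal is to construct a diffeomorphism $\Psi: S \setminus D \to S' \setminus D'$ respecting the elliptic fibration structure near infinity. The natural strategy is to split $S \setminus D$ into two pieces: a neighborhood $U_\infty$ of infinity (i.e.\ $z^{-1}(\Delta^*)$ for a small punctured disc $\Delta^* \ni p_\infty$) and the complement, a compact manifold with boundary. On $U_\infty$ I would construct $\Psi$ essentially by hand from Kodaira's explicit local model, and then I would need to extend it over the compact piece. The first step is therefore to fix, on both $S$ and $S'$, Kodaira's normal form for an elliptic fibration over $\Delta^*$ with the given monodromy: by \cite{Kodaira1963}, since there is a section, $z^{-1}(\Delta^*) \cong (\Delta^* \times \CC)/(\ZZ\tau_1 \oplus \ZZ\tau_2)$, where the periods $\tau_1, \tau_2$ are multivalued holomorphic functions on $\Delta^*$ determined up to the monodromy action, and the section corresponds to the zero locus $\{v = 0\}$. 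For each of the listed Kodaira types the conjugacy class of the monodromy matrix in $\SL(2,\ZZ)$ (for $\I_\nu^*$ this is $-\begin{pmatrix}1 & \nu \\ 0 & 1\end{pmatrix}$ as in Remark~\ref{r:periods}; for the finite-monodromy types it is an elliptic element) is fixed, and after a holomorphic change of the base coordinate one may put the periods of $S$ and $S'$ into the same model form — this is exactly the coordinate normalization already carried out in the proof of Proposition~\ref{p:fircom}.

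\textbf{Construction near infinity.} Once both fibrations are in the same Kodaira normal form over a common punctured disc $\Delta^*$ (with coordinate $\xi$, or $u$ with $\xi = u^2$ in the $\I_\nu^*$ case), the real coordinates $(\xi, w_1, w_2)$ defined by $v = w_1\tau_1(\xi) + w_2 \tau_2(\xi)$ give a diffeomorphism $z^{-1}(\Delta^*) \cong \Delta^* \times T^2$ (in the $\I_\nu^*$ cases, a $\ZZ_2$-quotient thereof), identically for $S$ and $S'$. Defining $\Psi$ on $z^{-1}(\Delta^*)$ to be the identity in these coordinates automatically gives $z = z' \circ \Psi$, sends the zero section to the zero section, and sends $w_1\tau_1 + w_2\tau_2$ to $w_1\tau_1' + w_2\tau_2'$, as required. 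One must check this is well-defined across the monodromy, but since the monodromy matrices on $S$ and $S'$ act by the same integer matrix on $(w_1,w_2)$ this is immediate; in the $\I_\nu^*$ cases one also checks compatibility with the branched double cover and the $\ZZ_2$-action $(u,v)\mapsto(-u,-v)$, which is again the same on both sides.

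\textbf{Extending over the compact piece.} The remaining task is to extend the boundary diffeomorphism $\Psi|_{z^{-1}(\p\Delta')}$ (for a slightly larger disc $\Delta'$) to a diffeomorphism between the compact manifolds-with-boundary $W \equiv S \setminus z^{-1}(\Delta')$ and $W' \equiv S' \setminus z^{-1}(\Delta')$. Here I would argue as follows: $S$ and $S'$ are both rational elliptic surfaces, hence both diffeomorphic to $\CC\PP^2 \# 9\overline{\CC\PP^2}$, and more relevantly, the fibration restricted to a disc $\Delta'$ around $p_\infty$ containing no other singular fibers is a trivial $T^2$-bundle; so $W$ and $W'$ are each the total space of an elliptic fibration over $\PP^1 \setminus \Delta'$ (a disc) with the same collection of singular fibers "inside." The cleanest approach is to invoke that a relatively minimal elliptic fibration over a disc is determined up to fiber-preserving diffeomorphism by its monodromy representation $\pi_1(\text{disc}\setminus\{\text{critical values}\}) \to \SL(2,\ZZ)$ together with the section; but since we are free to choose where the interior singular fibers sit and we only need \emph{some} diffeomorphism (not one respecting the fibration in the interior), it suffices to extend the boundary map over the compact pieces using that both $W, W'$ deformation retract onto their central configurations in a compatible way, or more robustly, to note that $W \cup_{\p} (\text{collar})$ and $W'$ are simply-connected $4$-manifolds with the same boundary $z^{-1}(\p\Delta') \cong T^3$ (a flat $3$-manifold in general) and the same intersection form, and apply the fact that the diffeomorphism is already fixed near the boundary. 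I expect \textbf{this extension step to be the main obstacle}: one needs a clean statement that an isotopy of the boundary $T^3$ (respecting the fibration structure) extends inward, and that the two filling manifolds are diffeomorphic rel boundary — this should follow from the classification of rational elliptic surfaces (Persson, Miranda) which fixes the diffeomorphism type once the singular fiber configuration is known, combined with a relative-isotopy-extension argument and the observation that $H_1$ and the intersection form match. A safe alternative I would fall back on is to build $\Psi$ globally from the start as a fiber-preserving diffeomorphism over all of $\PP^1$ by deforming both fibrations to have matching singular fiber locations and then gluing local models over each singular fiber and trivial pieces over the complement, which reduces everything to the local theory of \cite{Kodaira1963}, at the cost of only controlling the behavior near $D$ rather than globally respecting $z$.
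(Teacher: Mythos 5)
Your construction near infinity is fine and matches what the paper does: Kodaira's normal form over a punctured disc gives the affine identification of fibers, and the monodromy/$\ZZ_2$-compatibility checks are as you describe. The genuine gap is in the extension over the compact piece, which you correctly flag as the main obstacle but do not actually resolve. Two problems. First, the fallback via ``simply-connected $4$-manifolds with the same boundary and intersection form'' is not a valid argument in the smooth category in dimension $4$: the intersection form does not determine the diffeomorphism type of a smooth $4$-manifold, and even granting that all rational elliptic surfaces are diffeomorphic to $\CC\PP^2 \# 9 \overline{\CC\PP^2}$, what you need is a diffeomorphism \emph{rel} the prescribed boundary identification, which does not follow from abstract diffeomorphism of the fillings. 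Second, your ``safe alternative'' of matching singular fiber locations and gluing local models ignores that the interior singular fiber configurations of $S$ and $S'$ can be genuinely different (e.g.\ with $D$ of type $\I_0^*$, one surface may have six $\I_1$ fibers elsewhere while another has an $\I_2$ and four $\I_1$'s), so the monodromy representations over the complementary disc are not conjugate and there is nothing to match. Deforming one configuration into the other is exactly the nontrivial content that must be supplied.

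The paper supplies it by working with the Weierstra{\ss} model: it shows, case by case, that the space of allowable Weierstra{\ss} data $(A,B)$ with a prescribed fiber type over $p_\infty$ is connected, that the ``bad'' locus (where some other fiber degenerates beyond $\I_1$) is a proper subvariety, and then runs an Ehresmann-type vector field argument through a connecting family (using simultaneous resolution of the RDP singularities, after base change, to handle bad central fibers). The connectedness of this moduli space is the key lemma your proposal is missing; without it, or some equivalent statement (e.g.\ a connectedness result for the space of relatively minimal elliptic fibrations over the disc with fixed boundary monodromy, in the spirit of Moishezon--Livn\'e), the extension step does not go through. Your outline of the near-infinity modification (making the diffeomorphism affine on fibers using the periods) is essentially the paper's final step, so once the connecting-family argument is in place the rest of your plan would assemble correctly.
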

To prove this, we will use the Weierstra{\ss} representation of a rational elliptic surface, which we describe next.
Let $z: S \rightarrow \PP^1$ be any rational elliptic surface with a section $\Sigma \subset S$.
The section hits some multiplicity $1$ component of each singular fiber. The components in a singular fiber which do not hit this component (if there are any) form an ADE-configuration of $(-2)$-curves, which can be blown down to a surface $\check{S}$ with only RDP singularities. 
\begin{theorem}[Lecture II of \cite{MirandaBTES}] There exist a section $A$ of  $\mathcal{O}_{\PP^1}(4)$ and $B$ of $\mathcal{O}_{\PP^1}(6)$
such that the surface $\check{S}$ is biholomorphic to 
\begin{align}
S_{(A,B)} = \{ Z_2^2 Z_0 = Z_1^3 + A Z_1 Z_0^2 + B Z_0^3 \}  
\subset
\PP(  \mathcal{O}_{\PP^1} \oplus \mathcal{O}_{\PP^1}(2) \oplus \mathcal{O}_{\PP^1}(3))
\end{align}
by a biholomorphism which takes $z$ onto $\pi_{(A,B)} : S_{(A,B)} \rightarrow \PP^1$, which is the restriction to $S_{(A,B)}$ of the projection 
\begin{align} 
\pi : \PP(  \mathcal{O}_{\PP^1} \oplus \mathcal{O}_{\PP^1}(2) \oplus \mathcal{O}_{\PP^1}(3)) \rightarrow \PP^1.
\end{align}
The biholomorhism can be chosen to map the given section $\Sigma$ onto the zero section $\{Z_1 = 0, Z_0 = 0\}$.
Finally, this biholomorphism lifts to a biholomorphism of $S$ with the minimal resolution of $S_{(A,B)}$. 
\end{theorem}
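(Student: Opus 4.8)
The plan is to relativize the classical Weierstra{\ss} $\wp$-function embedding of a smooth elliptic curve with a marked point into $\PP^2$. I would work on the contracted model $\check{z}:\check{S}\to\PP^1$, whose fibers are all irreducible (Kodaira types $\I_1$ or $\II$ at worst), with $\check{\Sigma}\subset\check{S}$ the image of $\Sigma$; since $\Sigma$ is disjoint from the contracted $(-2)$-curves, $\check{\Sigma}$ is a Cartier divisor avoiding $\mathrm{Sing}(\check{S})$, and as $\check{S}$ has rational singularities $R^1\check{z}_*\mathcal{O}_{\check{S}}=R^1z_*\mathcal{O}_S$. Set $\mathscr{L}:=(R^1z_*\mathcal{O}_S)^{\vee}$. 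The canonical bundle formula $K_S=z^*(K_{\PP^1}\otimes\mathscr{L})$ (there are no multiple fibers, since there is a section) together with the fact that $-K_S$ is linearly equivalent to a fiber of a rational elliptic surface forces $K_{\PP^1}\otimes\mathscr{L}=\mathcal{O}_{\PP^1}(-1)$, hence $\mathscr{L}\cong\mathcal{O}_{\PP^1}(1)$; and $\mathcal{O}_{\check{S}}(\check{\Sigma})|_{\check{\Sigma}}=N_{\check{\Sigma}/\check{S}}\cong\mathscr{L}^{-1}$ because $\Sigma^2=-1$.

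Next I would extract the Weierstra{\ss} data by computing $\check{z}_*\mathcal{O}(n\check{\Sigma})$ for small $n$. The restriction sequences $0\to\mathcal{O}((n-1)\check{\Sigma})\to\mathcal{O}(n\check{\Sigma})\to\mathscr{L}^{-n}\to0$ (using $\check{\Sigma}\cong\PP^1$ via $\check{z}$), the fiberwise vanishing $H^1(E,\mathcal{O}(n\cdot\mathrm{pt}))=0$ for $n\ge1$ (whence $R^1\check{z}_*\mathcal{O}(n\check{\Sigma})=0$), and the vanishing $\mathrm{Ext}^1(\mathscr{L}^{-n},-)=H^1(\PP^1,\mathscr{L}^{\ge1}\otimes-)=0$ on the relevant summands give inductively $\check{z}_*\mathcal{O}_{\check{S}}=\mathcal{O}_{\PP^1}$, $\check{z}_*\mathcal{O}(\check{\Sigma})=\mathcal{O}_{\PP^1}$ (no fiber function has a single simple pole), $\check{z}_*\mathcal{O}(2\check{\Sigma})=\mathcal{O}_{\PP^1}\oplus\mathscr{L}^{-2}$, and $\check{z}_*\mathcal{O}(3\check{\Sigma})=\mathcal{O}_{\PP^1}\oplus\mathscr{L}^{-2}\oplus\mathscr{L}^{-3}$. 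Let $x$ and $y$ generate the new summands, regarded after the appropriate twist as global sections of $\mathcal{O}(2\check{\Sigma})\otimes\check{z}^*\mathscr{L}^2$ and $\mathcal{O}(3\check{\Sigma})\otimes\check{z}^*\mathscr{L}^3$ — the relative analogues of $\wp,\wp'$. The rank-$6$ sheaf $\check{z}_*\mathcal{O}(6\check{\Sigma})$ then carries the seven fiberwise functions $1,x,y,x^2,xy,x^3,y^2$, whose pole orders along $\check{\Sigma}$ are $0,2,3,4,5,6,6$; comparing leading terms yields one relation, and over $\CC$ one completes the square in $y$ and the cube in $x$ to reduce it to $y^2=x^3+Ax+B$. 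The weights of $x$ and $y$ make this relation homogeneous of weight $\mathscr{L}^6$, forcing $A\in H^0(\PP^1,\mathscr{L}^4)=H^0(\mathcal{O}_{\PP^1}(4))$ and $B\in H^0(\PP^1,\mathscr{L}^6)=H^0(\mathcal{O}_{\PP^1}(6))$.

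Then I would conclude. The triple $(Z_0,Z_1,Z_2)=(1,x,y)$ has no common zero (it equals $[0:0:1]$ along $\check{\Sigma}$, where $y$ outvanishes $x$), so defines a morphism over $\PP^1$,
\[
\check{S}\longrightarrow\PP(\mathcal{O}_{\PP^1}\oplus\mathscr{L}^2\oplus\mathscr{L}^3)=\PP(\mathcal{O}_{\PP^1}\oplus\mathcal{O}_{\PP^1}(2)\oplus\mathcal{O}_{\PP^1}(3)),
\]
landing in the weight-$6$ hypersurface $S_{(A,B)}=\{Z_2^2Z_0=Z_1^3+AZ_1Z_0^2+BZ_0^3\}$; it sends $z$ to $\pi_{(A,B)}$ and $\check{\Sigma}$ onto the zero section $\{Z_0=Z_1=0\}$. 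This morphism is birational, being an isomorphism on the smooth generic fiber by Weierstra{\ss} theory, and finite, since it contracts no curve (the fibers of $\check{z}$ are already irreducible, and $\check{\Sigma}$ is not contracted); as $\check{S}$ is normal and $S_{(A,B)}$ — a hypersurface in a smooth $3$-fold, hence Cohen--Macaulay, with finite singular locus lying over the zeros of $\Delta=4A^3+27B^2$ — is normal, Zariski's main theorem forces it to be an isomorphism. A local analysis of $y^2=x^3+A(t)x+B(t)$ near each zero of $\Delta$ (Tate's algorithm / Kodaira's table, governed by $\mathrm{ord}_tA$, $\mathrm{ord}_tB$, $\mathrm{ord}_t\Delta$) shows the surface singularities of $S_{(A,B)}$ are exactly the ADE singularities obtained by contracting the non-identity fiber components, so the isomorphism $\check{S}\cong S_{(A,B)}$ is compatible with $S\to\check{S}$; since $S\to\check{S}$ is the minimal resolution of those rational double points, the biholomorphism lifts to an isomorphism of $S$ with the minimal resolution of $S_{(A,B)}$, carrying $z$ to $\pi_{(A,B)}$ and $\Sigma$ to the zero section.

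I expect the main obstacle to be twofold. First, the cohomology-and-base-change arguments must hold uniformly over $\PP^1$, including over the singular fibers; this is exactly why one works on $\check{S}$, where $\check{z}$ is flat with every fiber an irreducible curve of arithmetic genus $1$ and $\check{\Sigma}$ avoids the singularities, so that $\mathcal{O}(n\check{\Sigma})$ restricts fiberwise to a degree-$n$ line bundle and the pushforwards are locally free of the expected ranks and graded pieces. Second, matching the singular fibers of the Weierstra{\ss} model with the rational double points coming from the contraction is the genuinely computational step (Kodaira's classification of singular fibers against the valuation data of $A,B,\Delta$); the remaining ingredients — vanishing of the relevant $\mathrm{Ext}$-groups, nonexistence of a fiber function with a single simple pole, and completing the Weierstra{\ss} cubic over $\CC$ — are routine.
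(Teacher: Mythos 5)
This statement is quoted from Miranda's book \cite{MirandaBTES} and the paper gives no proof of it, only the citation; your argument is the standard construction of the relative Weierstra{\ss} model (canonical bundle formula giving $\mathscr{L}\cong\mathcal{O}_{\PP^1}(1)$, pushforwards of $\mathcal{O}(n\check{\Sigma})$, the relation among $1,x,y,\dots,x^3,y^2$, and Zariski's main theorem to identify $\check{S}$ with the hypersurface), which is essentially the proof in the cited source. I see no gaps: the key points — flatness with integral genus-one fibers on $\check{S}$, the section avoiding the rational double points, normality of $S_{(A,B)}$, and $S\to\check{S}$ being the minimal resolution — are all correctly identified and handled.
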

The pair $(A,B)$ is called  \textit{Weierstra{\ss} data}.
The discriminant is
$\Delta = 4 A^3 + 27 B^2$.
 We let
\begin{align}
a_p &= \mbox{the order of vanishing of $A$ at $p$}, \\
b_p &= \mbox{the order of vanishing of $B$ at $p$}, \\
\delta_p & =  \mbox{the order of vanishing of $\Delta$ at $p$},\\
e_p &= \mbox{the Euler characteristic of the singular fiber}.
\end{align}

In \cite{Tate}, Tate showed that the vanishing orders $a_p, b_p, \delta_p$ completely determine the 
fiber type; see Table~\ref{Tatetable}. Note that we only consider Weierstra{\ss} data with the following constraint: there is no point $p \in \PP^1$ where $a_p \geq 4$ and $b_p \geq 6$ \cite[Proposition~III.3.2]{MirandaBTES}.

\begin{table}[h]
\caption{Tate's algorithm}
\label{Tatetable}
 \renewcommand\arraystretch{1.5}
\begin{tabular}{|c|c|c|c|c|c|c|c|c|c|c|c|} \hline
Type  & $\I_0$ & $\I_N$ & $\I_0^*$  & $\I_N^*$ &$\II$  &$\III$ &$\IV$ &$\IV^*$ &$\III^*$ & $\II^*$ \\  \hline
$e_p$  & $0$   & $N$ &  $6$  & $N+6$ & $2$  & $3$ & $4$  &$8$  &  $9$ &  $10$  \\  \hline
 $a_p$  &$a \geq 0$  & $0$ &  $a \geq 2 $  &  $2$ & $a \geq 1$    &  $1$&$a \geq 2$ & $a \geq 3$ &$3$ &  $a \geq 4$\\  \hline
 $b_p$ &  $b \geq 0$ &$0$ & $b \geq 3$   & $3$ &  $1$ & $b \geq 2$ &$2$ & $4$ & $b \geq 5$ & $5$  \\  \hline
$\delta_p$ &  $0$ & $N$ &   $6$ &$N+6$ &  $2$  & $3$ &  $4$ & $8$  &  $9$&  $10$  \\  \hline
  \end{tabular}
\end{table}
We can assume that the divisor $D = D_{\infty}$ is over the point $p_{\infty} \equiv [0,1] \in \PP^1$. We will next write out the constraints on $A,B$ in each case and prove that the space of Weierstra{\ss} data with a fixed Kodaira fiber type $D_{\infty}$ is connected in the following cases which arise in the compactification of $\ALG$ and $\ALG^*$ gravitational instantons. 

{\bf{Cases of I$_\nu^*$:}} In these cases,
\begin{align}
A &= a_0 z_1^4 + a_1 z_1^3 z_2 + a_2 z_1^2 z_2^2, \\
B & = b_0 z_1^6 + b_1 z_1^5 z_2 + b_2 z_1^4 z_2^2 + b_3 z_1^3 z_2^3,
\end{align}
and
\begin{align}
\begin{split}
\Delta = &( 4 a_2^3 + 27 b_3^2 ) z_1^6 z_2^6 + (12 a_1 a_2^2 + 54 b_2 b_3 ) z_1^7 z_2^5 \\
& + (12 a_1^2 a_2 + 12 a_0 a_2^2 + 54 b_1 b_3 + 27 b_2^2) z_1^8 z_2^4 \\
& + (4 a_1^3 + 24 a_0 a_1 a_2 + 54 b_0 b_3 + 54 b_1 b_2) z_1^9 z_2^3 \\
& + (12 a_0^2 a_2 + 12 a_0 a_1^2 + 27 b_1^2 +54 b_0 b_2) z_1^{10} z_2^2 \\
& + (12 a_0^2 a_1 + 54 b_0 b_1)z_1^{11} z_2 + (4 a_0^3 + 27 b_0^2) z_1^{12}.
\end{split}
\end{align}
In the I$_0^*$ case, we need to show that the set
\begin{align}\{4 a_2^3 + 27 b_3^2 \neq 0\}\subset \{(a_0, a_1, a_2, b_0, b_1, b_2, b_3) \in \CC^7\}\end{align}
is connected. This is trivially true because the inequalities do not affect the connectedness.

In the I$_1^*$ case, the set is
\begin{align}\{a_2 \neq 0, b_3 \neq 0, 4 a_2^3 + 27 b_3^2 = 0, 12 a_1 a_2^2 + 54 b_2 b_3 \neq 0\}.\end{align}
It is connected because the condition $4 a_2^3 + 27 b_3^2 = 0$ defines an elliptic curve in $\CC^2$, which is connected. The inequalities also do not affect the connectedness.

In the I$_2^*$ case, the set is
\begin{align}\{a_2 \neq 0, b_3 \neq 0, 4 a_2^3 + 27 b_3^2 = 12 a_1 a_2^2 + 54 b_2 b_3 = 0, 12 a_1^2 a_2 + 12 a_0 a_2^2 + 54 b_1 b_3 + 27 b_2^2 \neq 0\}.\end{align}
The condition
$12 a_1 a_2^2 + 54 b_2 b_3 = 0$
 is the same as
\begin{align}
b_2 = - \frac{12 a_1 a_2^2}{54 b_3}.
\end{align}
If we ignore the inequalities, we get a graph over the product of the elliptic curve $4 a_2^3 + 27 b_3^2 = 0$ with $\{(a_0, a_1, b_0, b_1) \in \CC^4\}$, so this set is also connected.

In the I$_3^*$ case, the set is
\begin{align}
\begin{split}
\{a_2 \neq 0, b_3 \neq 0, 4 a_2^3 + 27 b_3^2 = 12 a_1 a_2^2 + 54 b_2 b_3 = 12 &a_1^2 a_2 + 12 a_0 a_2^2 + 54 b_1 b_3 + 27 b_2^2 = 0,  \\
&4 a_1^3 + 24 a_0 a_1 a_2 + 54 b_0 b_3 + 54 b_1 b_2 \neq 0\}.
\end{split}
\end{align}
The condition
\begin{align}
12 a_1^2 a_2 + 12 a_0 a_2^2 + 54 b_1 b_3 + 27 b_2^2 = 0
\end{align}
is the same as
\begin{align}
 b_1 = - \frac{12 a_1^2 a_2 + 12 a_0 a_2^2 + 27 b_2^2}{54 b_3}.
\end{align}
If we ignore the inequalities, we get a graph of a map from the product of the elliptic curve $4 a_2^3 + 27 b_3^2 = 0$ with $\{(a_0, a_1, b_0) \in \CC^3\}$ to $\{(b_1, b_2) \in \CC^2\}$. To define this map, we compute $b_2$ first and then compute $b_1$, so this set is also connected.

In the I$_4^*$ case, the set is
\begin{align}
\begin{split}
\{a_2 \neq 0, &b_3 \neq 0, 4 a_2^3 + 27 b_3^2 = 12 a_1 a_2^2 + 54 b_2 b_3 = 12 a_1^2 a_2 + 12 a_0 a_2^2 + 54 b_1 b_3 + 27 b_2^2 = 0,  \\
&4 a_1^3 + 24 a_0 a_1 a_2 + 54 b_0 b_3 + 54 b_1 b_2 = 0 , 12 a_0^2 a_2 + 12 a_0 a_1^2 + 27 b_1^2 +54 b_0 b_2 \neq 0\}.
\end{split}
\end{align}
The condition
\begin{align}
4 a_1^3 + 24 a_0 a_1 a_2 + 54 b_0 b_3 + 54 b_1 b_2 = 0
\end{align}
is the same as
\begin{align}
b_0 = - \frac{4 a_1^3 + 24 a_0 a_1 a_2 + 54 b_1 b_2}{54 b_3}.
\end{align}
If we ignore the inequalities, we get a graph of a map from the product of the elliptic curve $4 a_2^3 + 27 b_3^2 = 0$ with $\{(a_0, a_1) \in \CC^2\}$ to $\{(b_0, b_1, b_2) \in \CC^3\}$. To define this map, we compute $b_2$ first, $b_1$ second, and then compute $b_0$, so this set is also connected.

\vspace{2mm}
\noindent
{\bf{Case of II:}}
In this case,
\begin{align}
A &= a_0 z_1^4 + a_1 z_1^3 z_2 + a_2 z_1^2 z_2^2 + a_3 z_1 z_2^3, \\
B & = b_0 z_1^6 + b_1 z_1^5 z_2 + b_2 z_1^4 z_2^2 + b_3 z_1^3 z_2^3 + b_4 z_1^2 z_2^4 + b_5 z_1 z_2^5, \\
\Delta & = 27 b_5^2 z_1^2 z_2^{10} + \dots
\end{align}
The set
\begin{align}
\{b_5 \neq 0, 27 b_5^2 \neq 0\}\subset\{(a_0, ..., a_3, b_0, ..., b_5) \in \CC^{10}\}
\end{align}
is obviously connected.

\vspace{2mm}
\noindent
{\bf{Case of III:}}
In this case,
\begin{align}
A &= a_0 z_1^4 + a_1 z_1^3 z_2 + a_2 z_1^2 z_2^2 + a_3 z_1 z_2^3, \\
B & = b_0 z_1^6 + b_1 z_1^5 z_2 + b_2 z_1^4 z_2^2 + b_3 z_1^3 z_2^3 + b_4 z_1^2 z_2^4, \\
\Delta & = 4 a_3^3 z_1^3 z_2^9 + \dots
\end{align}
The set
\begin{align}
\{a_3 \neq 0, 4 a_3^3 \neq 0\}\subset\{(a_0, ..., a_3, b_0, ..., b_4) \in \CC^{9}\}
\end{align}
is obviously connected.

\vspace{2mm}
\noindent
{\bf{Case of IV:}}
In this case,
\begin{align}
A &= a_0 z_1^4 + a_1 z_1^3 z_2 + a_2 z_1^2 z_2^2, \\
B & = b_0 z_1^6 + b_1 z_1^5 z_2 + b_2 z_1^4 z_2^2 + b_3 z_1^3 z_2^3 + b_4 z_1^2 z_2^4, \\
\Delta & = 27 b_4^2 z_1^4 z_2^8 + \dots
\end{align}
The set
\begin{align}
\{b_4 \neq 0, 27 b_4^2 \neq 0\}\subset\{(a_0, ..., a_2, b_0, ..., b_4) \in \CC^{8}\}
\end{align}
is obviously connected.

\vspace{2mm}
\noindent
{\bf{Case of IV$^*$:}}
In this case,
\begin{align}
A &= a_0 z_1^4 + a_1 z_1^3 z_2, \quad
B  = b_0 z_1^6 + b_1 z_1^5 z_2 + b_2 z_1^4 z_2^2, \quad
\Delta  = 27 b_2^2 z_1^8 z_2^4 + \dots
\end{align}
The set
\begin{align}
\{b_2 \neq 0, 27 b_2^2 \neq 0\}\subset\{(a_0, a_1, b_0, b_1, b_2) \in \CC^{5}\}
\end{align}
is obviously connected.

\vspace{2mm}
\noindent
{\bf{Case of III$^*$:}}
In this case,
\begin{align}
A &= a_0 z_1^4 + a_1 z_1^3 z_2, \quad
B  = b_0 z_1^6 + b_1 z_1^5 z_2, \quad
\Delta  = 4 a_1^3 z_1^9 z_2^3 + \dots
\end{align}
The set
\begin{align}
\{a_1 \neq 0, 4 a_1^3 \neq 0\}\subset\{(a_0, a_1, b_0, b_1) \in \CC^{4}\}
\end{align}
is obviously connected.

\vspace{2mm}
\noindent
{\bf{Case of II$^*$:}}
In this case,
\begin{align}
A &= a_0 z_1^4, \quad B  = b_0 z_1^6 + b_1 z_1^5 z_2, \quad
\Delta  = 27 b_1^2 z_1^{10} z_2^2 + \dots
\end{align}
The set
\begin{align}
\{b_1 \neq 0, 27 b_1^2 \neq 0\}\subset\{(a_0, b_0, b_1) \in \CC^{3}\}
\end{align}
is obviously connected.

\begin{definition} The Weierstra{\ss} data satisfying the above conditions in each case will be called {\textit{allowable}}. Allowable Weierstra{\ss} data $(A,B)$ is called {\textit{good}} 
if $\Delta$ has no multiple root on $\PP^1 \setminus \{ [0,1] \}$. 
Otherwise, it is called {\textit{bad}}.
\end{definition}

\begin{proof}[Proof of Theorem~\ref{t:elldiff}]
We first prove the theorem assuming that $z : S \rightarrow \PP^1$
and $z' : S' \rightarrow \PP^1$ both correspond to good Weierstra{\ss} data. 
Note that good Weierstra{\ss} data is equivalent 
to all singular fibers being of type I$_1$ (except for $D_{\infty}$).
Thus the Weierstra{\ss} models are smooth away from a single RDP singularity over $p_{\infty}$. 
The good property is characterized by non-vanishing of the resultant $R(\Delta, \Delta')$ on $\PP^1 \setminus \{[0,1]\}$. This resultant is not identically zero on the space of allowable Weierstra{\ss} data, since otherwise it would mean that there is no rational elliptic surface with fiber $D_{\infty}$ and all other fibers of type I$_1$. This is a contradiction since such surfaces definitely exist; see \cite{Miranda1990}. Consequently, the bad condition defines a proper subvariety of the space of 
allowable Weierstra{\ss} data, so does not affect the connectedness.  
Consequently, there is a holomorphic family corresponding to good Weierstra{\ss} data 
\begin{align} 
S_{(A_t,B_t)} \rightarrow \mathcal{S} \rightarrow \mathcal{B}
\end{align}
over a Zariski-open base space $\mathcal{B}$. 
To prove the theorem, clearly we can reduce to the case that the base $\mathcal{B} = B_2(0)\ \subset \CC$ is a disc, $S_{(A,B)} = \pi^{-1}(0)$, and $S_{(A',B')} = \pi^{-1}(1)$. 
Since the $D_{\infty}$ fibers of the Weierstra{\ss} models are all the same cuspidal cubic, by taking a fixed sequence of blow-ups of the ambient space 
$\mathbb{P}( \mathcal{O}_{\PP_1} + \mathcal{O}_{\PP^1}(2) + \mathcal{O}_{\PP^1}(3))$ at the 
singular point of the cuspidal cubic, and then taking the corresponding sequences of proper transforms of $S_{(A_t,B_t)}$, we can resolve this family to a family with smooth total space $\tilde{\mathcal{S}}$, with compact fiber over $t$ given by $\tilde{S}_{(A_t,B_t)}$, 
which is the minimal resolution of $S_{(A_t,B_t)}$. Let $z_t : \tilde{S}_{(A_t,B_t)} \rightarrow \PP^1$ denote the elliptic projection.
By choosing some Riemannian metric on $\tilde{\mathcal{S}}$, we can therefore find a vector field $Y_1$ on $\tilde{\mathcal{S}}$, defined on a subset of $\tilde{\mathcal{S}}$ containing a neighborhood of $\tilde{D}_{\infty}$ of each fiber of the family, whose vertical part vanishes on $\tilde{D}_{\infty}$ and on each section $\Sigma_0$, and whose horizontal part projects to $\p/\p t$,
and which satisfies $(z_t)_*(Y_1) = 0$ for each $t \in \mathcal{B}$. We can also find a vector field $Y_2$ on $\tilde{\mathcal{S}}$, defined on a subset of $\tilde{\mathcal{S}}$ containing a large compact subset of each fiber of the family  whose horizontal part projects to $\p/\p t$.  Choosing a cutoff function $\chi$ which is $0$ near infinity and $1$ on the compact region of each fiber, the vector field $Y = \chi Y_2 + (1- \chi) Y_1$ will then give a globally defined vector field on $\tilde{\mathcal{S}}$. 
Similar to \cite[Theorem~4.1]{MorrowKodaira}, the flow of this vector field will then provide a smooth family of diffeomorphisms 
$F_t : \tilde{S}_{(A_t,B_t)} \rightarrow \tilde{S}_{(A_0,B_0)}$
satisfying $z_t \circ F_t = z_0$ and preserving the zero section near infinity. 

We next modify $F_t$ so that it has the affine property near infinity. 
Fixing a point $p \in \PP^1$ near infinity, we can use $F_t$ to identify 
$ H^1( z_t^{-1} (p);\ZZ)$ with $H^1( z_0^{-1} (p);\ZZ)$, and we can make 
a smooth choice of basis
\begin{align}
[\gamma_1(p,t)], [\gamma_2(p,t)] \in H^1( z_t^{-1} (p);\ZZ).
\end{align}
Next, the diffeomorphisms $F_t$ give an identification of the homological invariants
\begin{align}
R^1 (z_t)_* \ZZ  \cong R^1( z_0)_* \ZZ
\end{align}
near infinity. Given any other point $q$ near infinity, we choose paths $h(z,t)$ from $p$ to $q$, and then use the homological invariants to identify 
$H^1( z_t^{-1} (p);\ZZ)$ with $H^1( z_t^{-1} (q);\ZZ)$ to define
\begin{align}
[\gamma_1(q,t)], [\gamma_2 (q,t)] \in H^1(  z_t^{-1} (q);\ZZ).
\end{align}
These depend upon the path, but are well-defined up to monodromy.  
After using $F_t$ to identify $H^1(  z_t^{-1} (q);\ZZ)$ with $H^1(  z_0^{-1} (q);\ZZ)$, we see that this choice varies smoothly in $t$. 

Next, letting $x = Z_1/Z_0$, $y = Z_2/Z_0$, we see that $\alpha_1 = dx/y$ is a smooth holomorphic 1-form on the total space of the Weierstra{\ss} moduli space (for the same reason that $dx/y$ is a globally defined holomorphic $1$-form on the elliptic curve $y^2 = x^2 + a x  + b$). 
We then define multi-valued period functions by
 \begin{align}
\tau_1(q,t) = \int_{\gamma_1(q,t)} \alpha_1, \quad \tau_2(q,t) = \int_{\gamma_2(q,t)} \alpha_1,
\end{align}
which are smooth in both $q$ and $t$.

Since each $z_t : \tilde{S}_{(A_t,B_t)} \rightarrow \PP^1$  is an elliptic surface, using \cite{Kodaira1963}, for each $t$, there exists a neighborhood $U_{\infty,t}$ 
of $\tilde{D}_{\infty}$ such that we can identify 
$U_{\infty,t} \setminus \tilde{D}_{\infty}$ with a standard model
\begin{align}
\{\xi \in \CC \ | \ 0 < |\xi| < R^{-1} \}\times \CC/(\ZZ\oplus \ZZ),
\end{align}
where $\ZZ\oplus \ZZ$ acts by $(m,n) \cdot (\xi, w)=(\xi, w + m\tau_1(\xi^{-1}, t)+n\tau_2(\xi^{-1}, t))$,
such that $\xi=z^{-1}$ and the given sections $\Sigma$, $\Sigma'$ are mapped to $\Sigma_0 = \{(\xi,0) \ |0 < |\xi| < R^{-1} \}$ in these coordinates. 
We can define a diffeomorphism $\Phi_t$ near infinity from  $S_t \setminus D_t$
to $S_0 \setminus D_0$  such that $z_t = z_0 \circ \Phi_t$, the section $\Sigma_t$ is mapped to $\Sigma_0$, and any point $w_1 \tau_1(q,t) + w_2 \tau_{2}(q,t)$ on the fiber is mapped to $w_1 \tau_{1}(q,0) +  w_2 \tau_{2}(q,0)$ on each fiber.
Then the differential of $\Phi_t$ determines a vector field $Y_3$ on $\tilde{\mathcal{S}}$, defined on a subset of $\tilde{\mathcal{S}}$ containing a neighborhood of $\tilde{D}_{\infty}$ of each fiber of the family, whose horizontal part projects to $\p/\p t$, and which is affine near infinity. If we use $Y_3$ instead of $Y_1$ in the definition of $F_t$, then we get the required diffeomorphism which is affine near infinity.

Next, we consider the case that $z$ arises from bad Weierstra{\ss} data $(A,B)$. 
Above we have shown that the space of allowable Weierstra{\ss} data in each 
case is connected, and the bad Weierstra{\ss} data is a proper subvariety of the good Weierstra{\ss} data. We can therefore find a holomorphic 
family 
\begin{align} 
{S}_{(A_t,B_t)} \rightarrow \mathcal{S} \rightarrow \mathcal{B}
\end{align}
over the base $\mathcal{B}$ a disc, with $(A_0,B_0) = (A,B)$ and
 $(A_t,B_t)$ good for $t \neq 0$.
Arguing as in the above case, since the $D_{\infty}$ fibers of the Weierstra{\ss} models are all the same cuspidal cubic, we can resolve the family $\mathcal{S}$ to a family $\tilde{\mathcal{S}}$ with smooth fibers, except for the central fiber, which will have some nontrivial RDP singularities away from $\tilde{D}_{\infty} \subset \tilde{S}_{(A_0,B_0)}$, since we assumed the Weierstra{\ss} data $(A,B)$ is bad.
Consequently, the family of surfaces $\tilde{S}$ is then a \textit{smoothing} of all the RDP 
singularities of $S_{(A_0,B_0)}$. From the existence of simultaneous resolutions for RDP singularities \cite{Brieskorn,Tjurina}, after a base change, we may lift this family to a smooth family, with central fiber the minimal resolution of $S_{(A_0,B_0)}$.
The proof then proceeds as in the previous case to show that we can find a diffeomorphism 
from any rational elliptic surface with bad Weierstra{\ss} data to a rational elliptic surface with good Weierstra{\ss} data which is standard near infinity. 

Finally, we can obtain a diffeomorphism between any two surfaces with bad Weierstra{\ss} data by first finding a diffeomorphism from each surface to a surface with good Weierstra{\ss} data as in the previous paragraph, and then finding a diffeomorphism between these good surfaces as in the first part of the above proof.
\end{proof}

\begin{remark}In the case that $D$ is a regular fiber, Theorem~\ref{t:elldiff} is equivalent to \cite[Theorem~3.4]{CCIII}. Theorem~\ref{t:elldiff} implies the following.  
In the case that $D$ is an $\I_0^*$, $\II^*$, $\II$, $\III^*$, $\III$, $\IV^*$, or $\IV$ fiber, we have that any two ALG gravitational instantons with the same tangent cone at infinity are diffeomorphic to each other, which is Theorem~\ref{t:ALGtopology}. In the case that $D$ is an $\I_\nu^*$ fiber for $\nu=1, 2, 3, 4$, we have that any two ALG$^*$ gravitational instantons with the same $\nu$ are diffeomorphic to each other, which is Theorem~\ref{t:ALGstartopology}. In the following subsection, we will make an improvement to this which takes into account the $\ALG$ or $\ALG^*$ structure near infinity. 
\end{remark}

\subsection{Coordinates at infinity}
In this subsection, we prove Theorem~\ref{t:ALGstaruniform} and Theorem~\ref{t:ALGuniform} from the introduction, regarding the existence of ``uniform'' coordinate systems for ALG$^*$ and ALG gravitational instantons.  
\begin{proof}[Proof of Theorem \ref{t:ALGuniform}]
In the following arguments, the constant $R$ may increase in each step, but for simplicity of notation we will just use a single constant $R$.
Let 
\begin{equation}
(z_{\cC}, w_{\cC}) \equiv  (\mathscr{U}^{\frac{1}{\beta}}, \mathscr{U}^{1 - \frac{1}{\beta}} \mathscr{V})
\end{equation}
be the coordinates on $\cC_{\beta,\tau,L}(R)$ and $z$ be the holomorphic function on $X$ asymptotic to $z_{\cC}$ which was proved to exist in \cite[Theorem~4.14]{CCII}. Then for large $R$, we can define a smooth map 
\begin{equation}
f_1: \CC \setminus \overline{B_{(2R)^{\frac{1}{\beta}}}(0)} \to \CC \setminus \overline{B_{R^{\frac{1}{\beta}}}(0)}
\end{equation}
by the following. For any number $t \in \CC$, we define $p$ as the point with $z_{\cC} = t$ and $w_{\cC}=0$. Then $f_1(t, \bar t)$ is defined as $z(\Phi(p))$. 
Since $g$ is ALG of order $\mathfrak{n}$ in the $\Phi$ coordinates, it follows that
\begin{equation}
|f_1(t, \bar t) - t| = O \big(|t|^{1+\beta(-\mathfrak{n}+\epsilon)} \big)
\end{equation}
and
\begin{equation}
\label{pf1}
|\partial_t f_1(t, \bar t) - 1| + |\partial_{\bar t} f_1(t, \bar t)| = O \big(|t|^{\beta(-\mathfrak{n}+\epsilon)}\big)
\end{equation}
for any $\epsilon>0$.

Now we choose a function $\chi_1$ which is $1$  if $|t|^{\beta} \ge 20 R$ and is $0$ if $|t|^{\beta} \le 10 R$. Define 
\begin{equation}
f_2(t, \bar t) = \chi_1(t, \bar t) (f_1(t, \bar t) - t) + t.
\end{equation}
Then $|\partial_t f_2(t, \bar t)|$ is very close to $1$ and $|\partial_{\bar t} f_2(t, \bar t)|$ is very close to $0$, so $f_2$ is a diffeomorphism which is $f_1$ if $|t|^{\beta} \ge 20 R$  and is the identity map if $|t|^{\beta} \le 10 R$. Then we define a map 
\begin{equation}
F_1: \cC_{\beta, \tau, L}(3R) \to X \setminus X_R
\end{equation}
by the following way: for each point $(z_{\cC}, w_{\cC}) \in \cC_{\beta, \tau, L}(3R)$, we write the fiber $\{z = f_2(z_{\cC})\}\subset X \setminus X_R$ as $\CC/ (\ZZ \tau_1 \oplus \ZZ \tau_2)$ such that the zero point is $\Phi(f_1^{-1}(f_2(z_{\cC})), 0)$. Then
\begin{equation}
F_1(z_{\cC}, w_{\cC}) = w_1 \tau_1 + w_2 \tau_2
\end{equation}
on $\{z=f_2(z_{\cC})\}$ if 
\begin{equation}
w_{\cC} = w_1 \tau_{1,\model} + w_2 \tau_{2,\model},
\end{equation}
where 
\begin{equation}
\tau_{1,\model} = \mathscr{U}^{1 - \frac{1}{\beta}} \cdot L
\end{equation}
and 
\begin{equation}
\tau_{2,\model} = \mathscr{U}^{1 - \frac{1}{\beta}} \cdot L \cdot \tau.
\end{equation}

Since $g$ is ALG of order $\mathfrak{n}$ in the $\Phi$ coordinates, we also have the estimates
\begin{align}
|\tau_1 - \tau_{1,\model}| \le C \cdot |\tau_{1,\model}| \cdot |\mathscr{U}|^{-\mathfrak{n} + \epsilon}, \quad |\tau_2 - \tau_{2,\model}| = C \cdot |\tau_{2,\model}| \cdot |\mathscr{U}|^{-\mathfrak{n} + \epsilon},
\end{align}
for any $\epsilon >0$, with similar estimates on their derivatives. Therefore we can choose multi-valued functions $\tau_1$ and $\tau_2$ so that $F_1$ is a well-defined single-valued smooth mapping. The point $F_1(p)$ is very close to $\Phi(p)$ if $|\mathscr{U}(p)| \ge 20 R$, since the distance between $F_1(p)$ and $\Phi(p)$ is bounded by a constant multiple of the diameter of the fiber times $|\mathscr{U}|^{- \mathfrak{n} + \epsilon}$ for any $\epsilon>0$. Since $F_1(p)$ is within the injectivity radius at $\Phi(p)$, we let $Y(p)$ be the tangent vector at $\Phi(p)$ corresponding to the shortest geodesic connecting these points. Then $F_1(p)=\exp_{\Phi(p)}(Y(p))$. Note that if we parallel transport through this shortest geodesic, then the differential of $\Phi$ is very close to the diffential of $F_1$. Choose another cut-off function $\chi_2$ which is $0$ if $|\mathscr{U}| \ge 200 R$ and is $1$ if $|\mathscr{U}| \le 100 R$. Then the map 
\begin{equation}
F_2(p) \equiv \exp_{\Phi(p)} (\chi_2(p) \cdot Y(p))
\end{equation}
is a diffeomorphism which is $\Phi$ if $|\mathscr{U}| \ge 200 R$ and is $F_1$ if $3R \le |\mathscr{U}| \le 100 R$.
We can define the map $F_2' : \cC_{\beta,\tau,L}(3R) \to X' \setminus X'_R$ in a similar way. 

By Theorem \ref{t:Topology of gravitational instantons}, there exists a diffeomorphism $\Psi$ from $X$ to $X'$ such that when $|z|^\beta>R$, $z = z' \circ \Psi$. Moreover, $\Psi$ preserves the affine structure on each fiber near infinity. Then we need to study the map $F_3 \equiv (F_2')^{-1} \circ \Psi \circ F_2$. The zero section over the circle $\{|z|^\beta = 4R\}$ is mapped by $F_3$ to another smooth section over the circle $\{|z|^\beta = 4R\}$. It is a section of the $T^2$-fibration. If it is liftable to a smooth section of the line bundle $\mathbb{L}^{-1}$ (the line bundle whose fiber at each point is the universal cover of the $T^2$-fiber), then $F_3$ also maps the zero section over $\{5R \ge |z|^\beta \ge 4R\}$ to the projection of a section $s$ of  $\mathbb{L}^{-1}$. Choose a cut-off function $\chi_3$ which is 0 when $|z|^\beta \ge 5R$ and is 1 when $|z|^\beta \le 4R$. Then for 
\begin{equation}
F_4(z, w_1\tau_1 + w_2\tau_2) \equiv (z, \chi_3 \cdot s(z,\bar z) + w_1\tau_1 + w_2\tau_2),
\end{equation}
the map 
\begin{align}
F_5(p) \equiv
\begin{cases}
F_2' \circ F_4 \circ (F_2)^{-1}(p)  & |z(p)| > R\\
\Psi(p) & |z(p)| \leq R \mbox{ or } p \in X_R\\
\end{cases}
\end{align}
will be $\Psi$ in the interior and $\Phi' \circ (\Phi)^{-1}$ near infinity, which is the the required mapping.

However, in general, this section may not be liftable to a section of $\mathbb{L}^{-1}$. To solve this problem, if we write $\mathscr{U}=z^{\beta}$ as $r e^{\i\theta}$, then we can rewrite the torus fibration over $\{|\mathscr{U}|\ge R\}$ as the trivial product of $(r, \theta) \in [R,\infty)\times[0, 2 \pi \beta]$ with $T^2$ after gluing the $T^2$ for $\theta = 0$ with the $T^2$ for $\theta = 2 \pi \beta$ using the monodromy group. Therefore, the periods $\tau_1$ and $\tau_2$ on $\theta = 0$ can be identified with $a_{11}\tau_1+a_{12}\tau_2$ and $a_{21}\tau_1+a_{22}\tau_2$ using the monodromy group. The difference of two local sections on this $T^2$-fibration can be viewed as a section of an $\RR^2$-fibration
\begin{equation}
\sigma = x_1(r,\theta) \tau_1 + x_2(r,\theta) \tau_2
\end{equation} on $[R,\infty)\times[0, 2 \pi \beta]$, with the gluing condition that 
there exist integers $m, n$ such that
\begin{equation}
x_1(r,0) (a_{11}\tau_1+a_{12}\tau_2) + x_2(r,0)(a_{21}\tau_1+a_{22}\tau_2) - x_1(r, 2 \pi \beta) \tau_1 - x_2(r,2 \pi \beta) \tau_2 = m \tau_1 + n \tau_2.
\end{equation}
In general, $(m,n)$ may not be $(0,0)$. Fortunately, since the matrix
\begin{align}
\left(
\begin{matrix}
a_{11}-1 & a_{12} \\
a_{21} & a_{22}-1 \\
\end{matrix}
\right)
\end{align}
is invertible in every ALG case, there exists constants $C_1$, $C_2$ such that
\begin{equation}
C_1 (a_{11}\tau_1+a_{12}\tau_2) + C_2(a_{21}\tau_1+a_{22}\tau_2) - C_1 \tau_1 - C_2 \tau_2 = m \tau_1 + n \tau_2.
\end{equation}
Therefore, the map $F : \cC_{\beta, \tau, L}(R) \to \cC_{\beta, \tau, L}(R)$ defined by 
\begin{equation}
F: (z, w_1 \tau_1(z) + w_2 \tau_2(z) ) 
\mapsto (z, (w_1 + C_1) \tau_1(z) + (w_2 + C_2) \tau_2(z) )
\end{equation}
is well-defined and $F^{-1} \circ F_3$ now maps the zero section to the projection of a section of $\mathbb{L}^{-1}$. From the definition of Greene-Shapere-Vafa-Yau's semi-flat metric, the map $F$ preserves the hyperk\"ahler structure on the model space, and we are done. 
\end{proof}

\begin{proof}[Proof of Theorem \ref{t:ALGstaruniform}]
The proof is similar to the proof of Theorem \ref{t:ALGuniform} with the following minor modifications. In the first part, we use the holomorphic function $z$ studied in the proof of Proposition \ref{p:fircom}. We define $f_1$ similarly, and since $g$ is ALG$^*$ of order $2$ in the $\Phi$ coordinates, we obtain an estimate 
\begin{equation}
|f_1(t, \bar t) - t| = O \big(|t|^{\frac{\epsilon}{2}}\big)
\end{equation}
for any $\epsilon > 0$.
We also have an estimate analogous to \eqref{pf1}. Since $g$ is $\ALG^*$ of order $2$ in the $\Phi$ coordinates, we have the estimates
\begin{align}
|\tau_1 - \tau_{1,\model}| \le C \cdot |\tau_{1,\model}| \cdot r^{- 2 + \epsilon}, \quad |\tau_2 - \tau_{2,\model}| = C \cdot |\tau_{2,\model}| \cdot r^{- 2 + \epsilon},
\end{align}
for any $\epsilon > 0$, with similar estimates on their derivatives, 
where $\tau_{1,\model}$ and $\tau_{2,\model}$ were defined in \eqref{Istartau}, so we can define the mapping $F_1$ in the same way. 
The distance between $F_1(p)$ and $\Phi(p)$ is bounded by a constant multiple of the diameter of the fiber times $r^{- 2 + \epsilon}$ for any $\epsilon>0$.
Since  the diameter of the fiber is proportional to $\sqrt{\log r}$, 
and the injectivity radius is proportional to $1/\sqrt{\log r}$, the construction of the mapping $F_2$ is also similar. In the last part, we only used the fact that the monodromy generator minus the identity matrix is invertible, which from \eqref{Istarmono} obviously holds in this case.
\end{proof}

\section{Remarks on the order of ALG gravitational instantons}
\label{s:order}
In this section, we will prove Theorem \ref{t:order}. Let $(X_{\beta},g,\bm{\omega})$
be an $\ALG$ gravitational instanton of order $\mathfrak{n} > 0$, and let  $s: X \to [1 ,\infty)$ be a smooth extension of $r$ via the diffeomorphism $\Phi: \cC_{\beta,\tau,L}(R) \rightarrow X \setminus X_R$, where $\Phi$ and $r$ are defined as in Definition~\ref{d:ALG-space}. 
\begin{definition}
Let $(X, g, \bm{\omega})$ be an $\ALG$ gravitational instanton of order $\mathfrak{n} > 0$.	
For any fixed $\delta\in\dR$, the weight function $\hat{\varrho}_{\delta}$ on $X$ is defined by
$\hat{\varrho}_{\delta}\equiv s^{-\delta - 1}$, and we define the weighted Sobolev norms as follows:
\begin{align*}
 \|\omega\|_{L^2_{\delta}(X)} \equiv \Big(\int_{X} |\omega\cdot \hat{\varrho}_{\delta}|^2 \dvol_{X}\Big)^{\frac{1}{2}},\  
 \|\omega\|_{W^{k,2}_{\delta}(X)} \equiv  \Big(\sum_{m=0}^{k} \|\nabla^m\omega\|_{L_{\delta-m}^2(X)}^2\Big)^{\frac{1}{2}}.\end{align*}
\end{definition}
We remark that this convention differs from the convention in \cite[Definition~4.1]{CCI}, because of the following. 
\begin{lemma}
\label{l:Sobolev} Let $(X,g,\bm{\omega})$ be an $\ALG$  gravitational instanton of order $\mathfrak{n} > 0$. For any $\delta \in \RR$ and $k\in\dN_0$, there exists a constant $C_{k, \delta} > 0$ so that
 \begin{align}
\sum\limits_{m = 0}^k  \sup_{\bm{x} \in X} | (s(\bm{x}))^{m-\delta} \nabla^m \omega (\bm{x})|
\leq C  \Vert \omega \Vert_{ W^{k + 3,2}_{\delta}(X)} 
 \end{align}
for all $\omega \in  W^{k + 3,2}_{\delta}(X)$.\end{lemma}
\begin{proof}The proof is a standard rescaling argument; see for example~\cite[Proposition~6.16]{CVZ}.
\end{proof}

In \cite[Theorem~A]{CCII}, Chen-Chen proved that there exists a diffeomorphism $F_1: \cC_{\beta,\tau,L}(R) \rightarrow \cC_{\beta,\tau,L}(R)$ homotopic to the identity such that when the cone angle $\beta \leq 1/2$, the $\ALG$ order $\mathfrak{n}$ must be at least 2 in the $\Phi\circ F_1$ coordinates. On the other hand, when $1/2 < \beta < 1$, the leading term of $(\Phi\circ F_1)^*\bm{\omega}-\bm{\omega}^{\cC}$ must be a linear combination of closed anti-self-dual forms, which are $\Rea \mathscr{U}^{\frac{1}{\beta}-2}d \mathscr{U}\wedge d\mathscr{\bar V}$
and $\Ima \mathscr{U}^{\frac{1}{\beta}-2}d \mathscr{U}\wedge d\mathscr{\bar V}$.
Using Remark~\ref{r:IJK}, the following formulas are straightforward to verify:
\begin{align}\mathcal{L}_{Y}\omega_1^{\cC}&= - d(I_{\cC}^*\eta) = d(\Ima \mathscr{U}^{\frac{1}{\beta}-1}d \mathscr{U})=0, \\
\mathcal{L}_{Y}\omega_2^{\cC}&= - d(J_{\cC}^*\eta) = d(\Rea \mathscr{U}^{\frac{1}{\beta}-1}d \mathscr{\bar V})= \Big(\frac{1}{\beta}-1\Big)\Rea (\mathscr{U}^{\frac{1}{\beta}- 2}d \mathscr{U}\wedge d \mathscr{\bar V}),\\
\mathcal{L}_{Y}\omega_3^{\cC} &= - d (K_{\cC}^*\eta) = - d(\Ima \mathscr{U}^{\frac{1}{\beta}-1}d \mathscr{\bar V}) = - \Big(\frac{1}{\beta}-1\Big)\Ima (\mathscr{U}^{\frac{1}{\beta}- 2}d \mathscr{U}\wedge d \mathscr{\bar V}),
\end{align}
where  $\mathcal{L}$ denotes the Lie derivative and $Y$ is the $g^{\cC}$-metric dual to 
$\eta \equiv \Rea \mathscr{U}^{\frac{1}{\beta}-1}d \mathscr{U}$.
Furthermore, we have
\begin{align}
\mathcal{L}_{I_{\cC} Y}\omega_1^{\cC} &= - d \eta=0,  &\mathcal{L}_{I_{\cC} Y}\omega_2^{\cC} &= d(K_{\cC}^* \eta), & \mathcal{L}_{I_{\cC} Y}\omega_3^{\cC} &= - d (J_{\cC}^*\eta),\\
\mathcal{L}_{J_{\cC} Y}\omega_1^{\cC} &= - d (K_{\cC}^*\eta),  &\mathcal{L}_{J_{\cC} Y}\omega_2^{\cC} &= - d \eta = 0, & \mathcal{L}_{J_{\cC} Y}\omega_3^{\cC} &= d (I_{\cC}^*\eta) = 0,\\
\mathcal{L}_{K_{\cC} Y}\omega_1^{\cC} &= d (J_{\cC}^*\eta),  &\mathcal{L}_{K_{\cC} Y}\omega_2^{\cC} &= - d (I_{\cC}^* \eta) = 0, & \mathcal{L}_{K_{\cC} Y}\omega_3^{\cC} &= - d \eta = 0.\end{align}
These formulas imply that there exists another diffeomorphism $F_2: \cC_{\beta,\tau,L}(R) \rightarrow \cC_{\beta,\tau,L}(R)$ homotopic to the identity such that
\begin{align}
\label{o1g}
(\Phi\circ F_2)^*\omega_1&=\omega_{1}^{\cC} + O (r^{\frac{1}{\beta}- 2 - \epsilon}), \\
\label{o2g}
(\Phi\circ F_2)^*\omega_2&=\omega_{2}^{\cC} + O (r^{\frac{1}{\beta}- 2 - \epsilon}),
\end{align}
as $r \to \infty$, for some $\epsilon > 0$.  
To see this, for a vector field $Z = O(r^{\frac{1}{\beta}-1})$, let $\chi$ be a cut-off function which is 1 on $\cC_{\beta,\tau,L}(2R)$ and is supported on $\cC_{\beta,\tau,L}(R)$, and then define 
$\Phi_{Z,t} : \cC_{\beta,\tau,L}(R) \rightarrow \cC_{\beta,\tau,L}(R)$ by 
$\Phi_{Z,t}(p) = \exp_{g^{\cC},p}(t \cdot \chi \cdot Z_p)$. If $0 \leq t \leq 1$, since $\frac{1}{\beta}-1 < 1$, the vector $t \chi Z_p$ has norm much smaller than the conjugate radius at $p$ (which is comparable to $r(p)$), so $\Phi_{Z,t}$ is a diffeomorphism for $R$ sufficiently large. Then it is straightforward to see that we have the expansions
\begin{align}
\Phi_{J_{\cC}Y,1}^* {\omega}_1^{\cC} - {\omega}_1^{\cC} - \mathcal{L}_{J_{\cC}Y} \omega_1^{\cC}  & = O( r^{\frac{1}{\beta}-2 - \epsilon}), \\
\Phi_{K_{\cC}Y,1}^* {\omega}_1^{\cC} - {\omega}_1^{\cC} - \mathcal{L}_{K_{\cC}Y} \omega_1^{\cC}  &= O(r^{\frac{1}{\beta}-2 -\epsilon }),
\end{align}
as $r = |\mathscr{U}| \to \infty$, so we can arrange that \eqref{o1g} is satisfied after pulling back by an appropriate diffeomorphism. Then \eqref{o2g} is proved similarly, using $Y$ and $I_{\cC} Y$, and this does not affect \eqref{o1g}.  

Note that we have the expansion
\begin{align}
\begin{split}
\label{o3g}
(\Phi\circ F_2)^*\omega_3 &= \omega_{3}^{\cC}+\frac{1}{\beta}\Big(\frac{1}{\beta}-1\Big)(a_0\Rea+b_0\Ima) (\mathscr{U}^{\frac{1}{\beta}-2} d \mathscr{U}\wedge d \mathscr{\bar V}) + O (r^{\frac{1}{\beta}- 2 - \epsilon})\\
&=\omega_{3}^{\cC}+d(K_{\cC}^*d(-a_0\Ima+b_0\Rea)\mathscr{U}^{\frac{1}{\beta}}) + O (r^{\frac{1}{\beta}- 2 - \epsilon})\\
&=\omega_{3}^{\cC} + (\Phi\circ F_2)^* \Big( 2\sqrt{-1} \partial_K \bar\partial_K (a_0 y - b_0 x) \Big)+ O (r^{\frac{1}{\beta}- 2 - \epsilon}),
\end{split}\end{align}
as $r \to \infty$, where $\epsilon > 0$, $z = x + \i y$ is the $I$-holomorphic function on $X$ asymptotic to $z_{\cC} \circ (\Phi\circ F_2)^{-1} = \mathscr{U}^{\frac{1}{\beta}} \circ (\Phi\circ F_2)^{-1}$, and $\partial_K$ is the $\partial$ operator using the complex structure $K$. Estimates for all higher derivatives analogous to \eqref{o1g}, \eqref{o2g}, and \eqref{o3g} follow from standard arguments using the hyperk\"ahler condition and elliptic regularity.

Next, we have the following key proposition.
\begin{proposition}
\label{t:order-2-equivalence}
If $\beta > 1 / 2$, then for any $\ALG$ gravitational instanton $(X, g, \bm{\omega})$ satisfying
\begin{align}(\Phi\circ F_2)^*\omega_1=\omega_{1}^{\cC} + O (r^{\frac{1}{\beta}-2 - \epsilon}), \quad (\Phi\circ F_2)^*\omega_2=\omega_{2}^{\cC} + O (r^{\frac{1}{\beta}-2 - \epsilon}),\end{align}
and
\begin{align}
(\Phi\circ F_2)^*\omega_3 = \omega_{3}^{\cC} + (\Phi\circ F_2)^* \Big( 2\sqrt{-1} \partial_K \bar\partial_K (a_0 y - b_0 x) \Big) + O (r^{\frac{1}{\beta}- 2 - \epsilon}),
\end{align}
for any $(a,b)\in\mathbb{R}^2$, there exists a smooth function 
\begin{equation}
\varphi = 2 \cdot (a y - a_0 y - b x + b_0 x) + O(r^{\frac{1}{\beta}- \epsilon}),
\end{equation}
as $r \to \infty$, unique up to adding a constant, such that 
$(X, g_{a,b}, \bm{\omega}_{a,b})$ is an $\ALG$ gravitational instanton with the same $\ALG$ coordinate system $\Phi\circ F_2$ and the same $K$, where
\begin{equation}
(\omega_{1,a,b}, \omega_{2,a,b}, \omega_{3,a,b}) = (\omega_1, \omega_2, \omega_{3} + \sqrt{-1} \partial_K\bar\partial_K \varphi),
\end{equation}
and $g_{a,b}$ is defined by $K$ and $\omega_{3,a,b}$.
\end{proposition}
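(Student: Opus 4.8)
The plan is to recast the assertion as a complex Monge--Amp\`ere equation for $\varphi$ with respect to the complex structure $K$, solve it on the complete manifold $X$ using the Tian--Yau existence theory and Hein's estimates already employed in Section~\ref{s:ALGstarclass}, and then read off the asymptotics from a weighted analysis on the ALG end. The first step is a purely algebraic reduction. Since $\omega_1$ and $\omega_2$ are left unchanged, $\Omega_K \equiv \omega_1 + \sqrt{-1}\,\omega_2$ is the same nonvanishing holomorphic $2$-form with respect to $K$ as for the original hyperk\"ahler structure, and $\omega_3' \equiv \omega_3 + \sqrt{-1}\,\partial_K\bar\partial_K\varphi$ is a closed real $(1,1)$-form with respect to $K$. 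By the converse statement recalled after Definition~\ref{d:HK}, applied with $(I,\omega_1,\Omega)$ replaced by $(K,\omega_3',\Omega_K)$, the data $(X, g', \omega_1, \omega_2, \omega_3')$, with $g'$ the metric determined by $K$ and $\omega_3'$, is a complete hyperk\"ahler $4$-manifold exactly when $\omega_3' > 0$ and $(\omega_3')^2 = \tfrac12\,\Omega_K\wedge\bar\Omega_K$, which by the hyperk\"ahler normalization \eqref{e:HKtriple} for the original structure equals $\omega_3\wedge\omega_3$. So everything reduces to producing a function $\varphi$ with the prescribed leading term such that $\omega_3 + \sqrt{-1}\,\partial_K\bar\partial_K\varphi$ is positive and has the same volume form as $\omega_3$.

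Next I would build an approximate solution and correct it. With $x + \sqrt{-1}\,y$ the $I$-holomorphic function asymptotic to $z_{\cC} = \mathscr{U}^{1/\beta}$ as in the setup above, let $\chi$ be a cutoff equal to $1$ near infinity and put $\tilde\varphi = 2\chi\,(a'y - ay - b'x + bx)$. By the expansion \eqref{o3g}, which states precisely that $2\sqrt{-1}\,\partial_K\bar\partial_K(ay-bx)$ is the anti-self-dual leading term of $\omega_3 - \omega_3^{\cC}$, the form $\omega_3^{\mathrm{app}} \equiv \omega_3 + \sqrt{-1}\,\partial_K\bar\partial_K\tilde\varphi$ equals $\omega_3^{\cC} + 2\sqrt{-1}\,\partial_K\bar\partial_K(a'y - b'x)$ plus lower-order terms near infinity; since the leading correction is ASD while $\omega_3^{\cC}$ is self-dual, the discrepancy $(\omega_3^{\mathrm{app}})^2 - \omega_3^2$ decays strictly faster than $r^{-(2 - 1/\beta)}$, and $\omega_3^{\mathrm{app}} > 0$ once the cutoff is placed far enough out. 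Writing $\varphi = \tilde\varphi + \psi$, the equation becomes $(\omega_3^{\mathrm{app}} + \sqrt{-1}\,\partial_K\bar\partial_K\psi)^2 = \omega_3^2$. Existence of a solution $\psi$, with positivity of $\omega_3^{\mathrm{app}} + \sqrt{-1}\,\partial_K\bar\partial_K\psi$, follows from the complete Calabi--Yau theorem \cite[Theorem~1.1]{TianYau}; the decay $\psi = O(r^{1/\beta - \epsilon})$ with all derivatives follows from Hein's weighted Schauder estimates \cite[Proposition~2.9]{Hein} and the mapping properties of the Laplacian between weighted H\"older spaces on the ALG end, using $\beta > 1/2$ to ensure the discrepancy from the approximate solution lies in an admissible range of weights, that the linearization gains two orders, and that the Monge--Amp\`ere nonlinearity (quadratic in $\sqrt{-1}\,\partial_K\bar\partial_K\psi$ in complex dimension two) contributes only faster-decaying terms, so that a contraction argument closes.

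With $\varphi = \tilde\varphi + \psi$ in hand, $\varphi = 2(a'y - ay - b'x + bx) + O(r^{1/\beta - \epsilon})$ and $\omega_3' - \omega_3^{\cC}$ has anti-self-dual leading term $2\sqrt{-1}\,\partial_K\bar\partial_K(a'y - b'x)$ with a remainder of higher order, so $(X, g', \bm{\omega}')$ is an ALG gravitational instanton in the coordinate system $\Phi$ and complex structure $K$, of order $2 - 1/\beta$ in general and of order $2$ when $a' = b' = 0$. Since $\sqrt{-1}\,\partial_K\bar\partial_K\varphi$ is $d$-exact, $[\bm{\omega}'] = [\bm{\omega}]$, and the case $a' = b' = 0$ gives Theorem~\ref{t:order}. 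For uniqueness, if $\varphi_1$ and $\varphi_2$ are two solutions with the same leading term, then $u \equiv \varphi_1 - \varphi_2 = o(r^{1/\beta})$, and subtracting the two Monge--Amp\`ere equations yields $\widetilde\omega \wedge \sqrt{-1}\,\partial_K\bar\partial_K u = 0$, where $\widetilde\omega \equiv \tfrac12(\omega_3 + \sqrt{-1}\,\partial_K\bar\partial_K\varphi_1) + \tfrac12(\omega_3 + \sqrt{-1}\,\partial_K\bar\partial_K\varphi_2)$ is a K\"ahler form uniformly equivalent to $g^{\cC}$ near infinity; hence $u$ is $\widetilde\omega$-harmonic, and a Liouville argument for harmonic functions on ALG ends (using that $1/\beta$ is the first positive indicial root, as in \cite{CCII}) forces $u$ to be constant.

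The analytic step is the main obstacle: carrying out the weighted Monge--Amp\`ere analysis on the complete ALG manifold so as to recover the solution with the sharp leading term and a strictly faster-decaying remainder, and in particular obtaining order exactly $2$ when $a' = b' = 0$. This requires identifying the indicial roots of the Laplacian on the flat $3$-manifold cross-section and the mapping properties of $\Delta$ between the corresponding weighted spaces, checking that the discrepancy produced by the approximate solution lies strictly between consecutive roots, and tracking the nonlinear corrections carefully; this refines the asymptotic analysis of Chen--Chen in \cite{CCII, CCIII}.
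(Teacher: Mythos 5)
Your overall strategy --- recast the problem as a complex Monge--Amp\`ere equation for $\varphi$ with respect to $K$, build an approximate solution with the prescribed leading term, correct it using Tian--Yau and Hein's estimates, and prove uniqueness by showing the difference of potentials is harmonic for an averaged metric --- is the same as the paper's. However, there are two concrete gaps in the existence step.

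First, the discrepancy of your approximate solution decays too slowly for a direct application of \cite[Theorem~1.1]{TianYau}. Since the leading correction is anti-self-dual, $(\omega_3^{\mathrm{app}})^2-\omega_3^2$ is of order $r^{2(\frac{1}{\beta}-2)}$, and by Table~\ref{ALGtable} this exponent lies in $[-8/5,-1]$ for $\beta\in\{2/3,3/4,5/6\}$; against the volume form $\sim r\,dr$ this is not even integrable, so the error term is far outside the admissible range for the Tian--Yau/Hein existence theory. This is why the paper first solves the \emph{linearized} equation $2\,\omega_{3,1}\wedge \i\partial_K\bar\partial_K\varphi_1=\omega_1^2-\omega_{3,1}^2$ in weighted Sobolev spaces on the end (using that $x,y$ are harmonic), and then iterates once more, improving the error from $W^{k,2}_{-99/100}$ to $W^{k,2}_{-99/25}$ before cutting off; only then is the remaining discrepancy integrable. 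Your ``contraction argument'' gestures at this but does not confront the fact that the initial error is non-integrable, which is precisely the obstacle.

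Second, and more fundamentally, you omit the global compatibility condition. Because the potential grows like $r^{1/\beta}$, Stokes' theorem does not make $\int_X\big((\omega_3')^2-\omega_1^2\big)=0$ automatic, yet this vanishing is necessary (and required by Tian--Yau) for a solution with bounded final correction. The paper arranges it by adding the term $t\,\i\partial_K\bar\partial_K\big(\chi(r/R_5)\log|z|\big)$ and choosing $t$ so that the total integral vanishes; note $\log|z|=O(r^{1/\beta-\epsilon})$, so this is consistent with the claimed form of $\varphi$. Without this normalization the Monge--Amp\`ere equation you write down is in general not solvable with the asserted asymptotics. A related omission appears in your uniqueness argument: the indicial root $0$ carries both $1$ and $\log|z|$, and the coefficient of $\log|z|$ in $\varphi_4-\varphi_5$ must be killed by an integral identity (Stokes applied to $\widetilde\omega\wedge\i\partial_K\bar\partial_K(\varphi_4-\varphi_5)$) before the maximum principle finishes the proof; a bare Liouville statement does not suffice.
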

\begin{proof}
We use $K$ as the complex structure and consider
\begin{equation}
\omega_{3, 1} \equiv \omega_3 + 2 \sqrt{-1} \partial_K \bar\partial_K (a y - a_0 y - b x + b_0 x).
\end{equation}
The form $\omega_{3, 1}$ is not positive in general. However, it must be positive on the region $\{r \ge R_1\}$ for sufficiently large $R_1$.
By the surjectivity in \cite[Theorem~4.4]{CCII}, there exists a smooth function $\varphi_1 \in W^{k + 2, 2}_{101 / 100}(\{r \ge R_1\})$ such that
\begin{equation}
2 \omega_{3, 1} \wedge \i \partial_K \bar\partial_K \varphi_1 = \omega_1^2 - \omega_{3, 1}^2,
\end{equation} 
because 
\begin{equation}
\omega_1^2 - \omega_{3, 1}^2 = \omega_3^2 - \omega_{3, 1}^2 = - 4 (\sqrt{-1} \partial_K \bar\partial_K (a y - a_0 y - b x + b_0 x))^2 \in W^{k,2}_{ - 99 / 100}(\{r \ge R_1\})
\end{equation}
for any $k \geq 0$, 
using the fact that $x, y$ are harmonic with respect to the hyperk\"ahler structure $(g, \omega_1, \omega_2, \omega_3)$ and $2 \cdot (\frac{1}{\beta} - 2) \le -1$ by Table~\ref{ALGtable}. Then for large enough $R_2 > R_1$, the form
\begin{equation}
\omega_{3, 2} \equiv \omega_{3, 1} + \i \partial_K \bar\partial_K \varphi_1
\end{equation} 
is smooth and  positive on the region $\{r \ge R_2\}$. Using Lemma~\ref{l:Sobolev}, it follows that 
\begin{align}
(\partial_K \bar\partial_K \varphi_1)^2 \in W^{k,2}_{ - 99 / 50}(\{r \geq R_2\})
\end{align} for any $k \geq 0$. 
So by the surjectivity in \cite[Theorem~4.4]{CCII}, we can find a smooth $\varphi_2 \in W^{k + 2, 2}_{1 / 50}(\{r \ge R_2\})$ such that
\begin{equation}
2 \omega_{3, 2} \wedge \i \partial_K \bar\partial_K \varphi_2 = \omega_1^2 - \omega_{3, 2}^2 = - (\sqrt{-1} \partial_K \bar\partial_K \varphi_1)^2 \in W^{k,2}_{ - 99 / 50}(\{r \geq R_2\})
\end{equation}
for any $k \geq 0$. Then the form
\begin{equation}
\omega_{3, 3} \equiv \omega_{3, 2} + \i \partial_K \bar\partial_K \varphi_2 = \omega_{3, 1} + \i \partial_K \bar\partial_K (2 \cdot (a y - a_0 y - b x + b_0 x) + \varphi_1 + \varphi_2)
\end{equation}
is smooth and positive on the region $\{r \ge R_3\}$ for large enough $R_3 > R_2$. 
 Using Lemma~\ref{l:Sobolev}, it follows that 
\begin{align}
(\partial_K \bar\partial_K \varphi_2)^2 \in W^{k,2}_{ - 99 / 25}(\{r \geq R_3\})
\end{align} for any $k \geq 0$. Then
\begin{equation}
\label{e:decaying}
\omega_1^2 - \omega_{3, 3}^2 = - (\sqrt{-1} \partial_K \bar\partial_K \varphi_2)^2 \in W^{k,2}_{- 99 / 25}(\{r \geq R_3\})
\end{equation}
for any $k \geq 0$. 

Now we take a cut-off function $\chi$ which is $1$ on $(2, \infty)$ and is $0$ on $(-\infty, 1)$. Then as long as $R_4 > R_3$ is large enough, the form
\begin{equation}
\omega_{3, 4} \equiv \omega_{3, 1} + \i \partial_K \bar\partial_K \Big(\chi\Big(\frac{r}{R_4}\Big) \cdot (2 \cdot (a y - a_0 y - b x + b_0 x) + \varphi_1 + \varphi_2) \Big)
\end{equation}
is smooth and positive on the whole manifold $X$. Moreover, by \eqref{e:decaying}, 
\begin{equation}
\int_X (\omega_1^2 - \omega_{3, 4}^2) < \infty.
\end{equation}
Since $\log |z|$ is harmonic with respect to $\omega_3$, the integral
\begin{equation}
\int_X \omega_3 \wedge \i \partial_K \bar\partial_K \Big(\chi\Big(\frac{r}{R_5}\Big) \cdot \log |z|\Big)
\end{equation}
is a non-zero number independent of $R_5$.
Define $t \in \RR$ such that
\begin{equation}
2\cdot t \cdot \int_X \omega_3 \wedge \i \partial_K \bar\partial_K \Big(\chi\Big(\frac{r}{R_5}\Big) \cdot \log |z|\Big) = \int_X (\omega_1^2 - \omega_{3, 4}^2),
\end{equation}
and define
\begin{equation}
\omega_{3, 5} = \omega_{3, 4} + t \i \partial_K \bar\partial_K \Big(\chi\Big(\frac{r}{R_5}\Big) \cdot \log |z|\Big).
\end{equation}
Then $\omega_{3, 5}$ is smooth and still positive for large enough $R_5 > R_4$. Moreover,
\begin{equation}
\int_X (\omega_{3, 5}^2 - \omega_1^2) = \int_X (\omega_{3, 4}^2 - \omega_1^2) + t \cdot \int_X (\omega_{3, 4} + \omega_{3, 5}) \wedge \i \partial_K \bar\partial_K \Big(\chi\Big(\frac{r}{R_5}\Big) \cdot \log |z|\Big).
\end{equation}
Using Stokes' theorem,
\begin{align}
\begin{split}
\int_X \omega_{3, 4} \wedge \i \partial_K \bar\partial_K \Big(\chi\Big(\frac{r}{R_5}\Big) \cdot \log |z|\Big) &= \int_X \omega_{3, 5} \wedge \i \partial_K \bar\partial_K \Big(\chi\Big(\frac{r}{R_5}\Big) \cdot \log |z|\Big) \\
& = \int_X \omega_3 \wedge \i \partial_K \bar\partial_K \Big(\chi\Big(\frac{r}{R_5}\Big) \cdot \log |z|\Big).
\end{split}
\end{align}
Therefore,
\begin{equation}
\int_X (\omega_{3, 5}^2 - \omega_1^2) = 0.
\end{equation}
Finally, we apply Tian-Yau's theorem \cite[Theorem~1.1]{TianYau} to find a bounded smooth function $\varphi_3: X \rightarrow \RR$ such that the form 
\begin{equation}\omega_{3, 6} \equiv \omega_{3, 5} + \i \partial_K \bar\partial_K \varphi_3
\end{equation}
satisfies
\begin{equation}
\omega_{3, 6}^2 = \omega_1^2 =\omega_2^2.
\end{equation}
Hein proved the higher order estimates for $\varphi_3$ \cite[Proposition~2.9]{Hein}. 
Taking $\omega_{3,a,b} = \omega_{3, 6}$ completes the existence proof. 

To show the uniqueness: if $\varphi_{4}$ and $\varphi_{5}$ are two potentials, then
\begin{align}
\begin{split}
&2 \cdot \Big(\omega_3 + \i \partial_K \bar\partial_K \frac{\varphi_{4} + \varphi_{5}}{2}\Big) \wedge \i \partial_K \bar\partial_K  (\varphi_{4}-\varphi_{5}) \\
&= (\omega_3 + \i \partial_K \bar\partial_K \varphi_{4})^2 - (\omega_3 + \i \partial_K \bar\partial_K \varphi_{5})^2 \\
&= \omega_1^2 - \omega_1^2 = 0.
\end{split}
\end{align}
So $\varphi_{4}-\varphi_{5} = O(r^{1 / \beta - \epsilon})$ is harmonic. By \cite[Proposition~7]{HHM}, the leading term of $\varphi_{4}-\varphi_{5}$ is a linear combination of $\log|z|$ and $1$. However, the coefficient of $\log|z|$ must be 0 by Stokes' theorem and the fact that
\begin{equation}
\int_X \Big(\omega_3 + \i \partial_K \bar\partial_K \frac{\varphi_{4} + \varphi_{5}}{2}\Big) \wedge \i \partial_K \bar\partial_K  (\varphi_{4}-\varphi_{5}) = 0.
\end{equation}
Therefore, $\varphi_{4}-\varphi_{5}$ is a constant plus a decaying harmonic function. By the maximum principle, there is no non-zero decaying harmonic function, so $\varphi_{4}-\varphi_{5}$ is a constant.
\end{proof}

Finally, to finish the proof of Theorem \ref{t:order} using Proposition \ref{t:order-2-equivalence}, we apply an argument similar to the one used in \cite[Theorem~A]{CCII} to $\bm{\omega}_{0,0}$ to find another diffeomorphism $F_3: \cC_{\beta,\tau,L}(R) \rightarrow \cC_{\beta,\tau,L}(R)$ homotopic to the identity such that $\bm{\omega}_{0,0}$ is an ALG gravitational instanton with order $2$ in the $\Phi\circ F_3$ coordinates. Actually, the diffeomorphism $F_2^{-1} \circ F_3$ can be chosen to be close to the identity map so that
\begin{equation}
(\Phi\circ F_2)^* \bm{\omega}_{a,b} = (\Phi\circ F_3)^* \bm{\omega}_{a,b} +  O (r^{\frac{1}{\beta} - 2 - \epsilon'})
\end{equation}
for another $\epsilon' > 0$ slightly smaller than $\epsilon > 0$.

\bibliographystyle{amsalpha}

\bibliography{CVZ_II_references}

 \end{document}